\newtheorem{thm}{Theorem}[section]
\newtheorem{cor}[thm]{Corollary}
\newtheorem{lem}[thm]{Lemma}
\newtheorem{prop}[thm]{Proposition}
\newtheorem{defn}[thm]{Definition}
\newtheorem{rem}[thm]{Remark}
\def\di{\displaystyle}
\def\l{\left}
\def\ae{\alpha}
\def\r{\right}
\def\R{\mathbb{R}}
\def\N{\mathbb{N}}
\def\H{L^{2}(D)}
\def\V{H^{1}(D)}
\def\h{\Delta t}
\def\tk{t_k}
\def\tkp{t_{k+1}}
\def\tn{t_n}
\def\tnp{t_{n+1}}
\def\tnm{t_{n-1}}
\def\unp{\chi_{n+1}}
\def\bnp{B_{n+1}}
\def\Fn{\mathcal{F}_{\tn}}
\def\Fnp{\mathcal{F}_{\tnp}}
\newcommand{\wstarconverge}{\stackrel{*}{\rightharpoonup}}
\begin{document}

\title[Well-posedness for a system of stochastic PDEs]{Well-posedness for the coupling of a random heat equation with a multiplicative stochastic Barenblatt equation}

\author[C. Bauzet]{Caroline Bauzet}
\address{Caroline Bauzet \\ Aix-Marseille Universit\'e, CNRS, Centrale Marseille,
Laboratoire de M\'ecanique et d'Acoustique,
4 impasse Nikola Tesla, 13013 Marseille, France}
\email{bauzet@lma.cnrs-mrs.fr}

%----------Author 2
\author[F. Lebon]{Fr\'ed\'eric Lebon}
\address{Fr\'ed\'eric Lebon \\ Aix-Marseille Universit\'e, CNRS, Centrale Marseille,
Laboratoire de M\'ecanique et d'Acoustique,
4 impasse Nikola Tesla,
13013 Marseille,
France}
\email{lebon@lma.cnrs-mrs.fr}

%----------Author 3
\author[A.A. Maitlo]{Asghar Ali Maitlo}
\address{Asghar Ali Maitlo \\ Aix-Marseille Universit\'e, CNRS, Centrale Marseille,
Laboratoire de M\'ecanique et d'Acoustique,
4 impasse Nikola Tesla, 13013 Marseille,
France}
\email{maitlo@lma.cnrs-mrs.fr}

%----------Author 4
\author[A. Zimmermann]{Aleksandra Zimmermann}
\address{Aleksandra Zimmermann \\ Universit\"at Duisburg-Essen, Fakult\"at f\"ur Mathematik, Thea-Leymann-Str. 9, 45127 Essen, Germany}
\email{aleksandra.zimmermann@uni-due.de}

\begin{abstract}
In this contribution, a stochastic nonlinear evolution system under Neumann boundary conditions is investigated. Precisely, we are interested in finding an existence and uniqueness result for a random heat equation coupled with a Barenblatt's type equation with a multiplicative stochastic force in the sense of It\^{o}. In a first step we establish well-posedness in the case of an additive noise through a semi-implicit time discretization of the system. In a second step, the derivation of continuous dependence estimates of the solution with respect to the data allows us to show the desired existence and uniqueness result for the multiplicative case.
\end{abstract}

\keywords{Stochastic system, random heat equation, Barenblatt equation, additive noise, multiplicative noise, It{\^o} integral, Neumann condition, time discretization, fixed point, maximal monotone operators.}

\maketitle

\section{Introduction}
We consider the  following system, coupling a heat equation with Barenblatt's one, perturbed firstly by an additive noise:
\begin{equation}\label{eqc1}
\left\{\begin{array}{rcl}
\di \partial_t\vartheta+\partial_t\Big(\chi-\int_0^.hdw\Big)-\Delta\vartheta&=&0 \text{ in }(0,T)\times D\times\Omega,\\
\di \tilde{\ae}\left(\partial_t (\chi-\int_0^. hdw)\right)-\Delta \chi &=&\vartheta \text{ in }(0,T)\times D\times\Omega, \\
\nabla \chi.\mathbf{n}=\nabla\vartheta.\mathbf{n}&=&0 \text{ on } (0,T)\times\partial D\times \Omega,\\
\chi(0,.)=\chi_{0}&\text{and}&\vartheta(0,.)=\vartheta_0,
\end{array}\right.
\end{equation}
and secondly by a multiplicative one:

\begin{equation}\label{eqc123}
\left\{\begin{array}{rcl}
\di \partial_t\vartheta+\partial_t\Big(\chi-\int_0^. \mathscr{H}(\chi)dw\Big)-\Delta\vartheta&=&0 \text{ in }(0,T)\times D\times\Omega,\\
\di \tilde{\ae}\left(\partial_t (\chi-\int_0^. \mathscr{H}(\chi)dw)\right) -\Delta \chi &=&\vartheta\text{ in }(0,T)\times D\times\Omega, \\
\nabla \chi.\mathbf{n}=\nabla\vartheta.\mathbf{n}&=&0 \text{ on } (0,T)\times\partial D\times \Omega,\\
\chi(0,.)=\chi_{0}&\text{and}&\vartheta(0,.)=\vartheta_0,
\end{array}\right.
\end{equation}
where $T>0$, $D$ denotes a smooth and bounded domain of $\R^d$ with $d\geqslant 1$, $\mathbf{ n}$ is the outward normal vector to the boundary $\partial D$, $\chi_0$ and  $\vartheta_0$ are given initial conditions. We consider a standard adapted one-dimensional continuous Brownian motion $$w=\{w_{t}, \mathcal{F}_{t}, 0\leq t\leq T\}$$ defined on a complete probability space $(\Omega,\mathcal{F},P)$ with a countably generated $\sigma$-field denoted $\mathcal{F}$ and a filtration $(\mathcal{F}_{t})_{t\geqslant 0}$ satisfying usual conditions (see \cite{DaPratoZabczyk}, \cite{PrevotRockner} for further informations on stochastic calculus). Let us precise that the additive and multiplicative stochastic integrals $\int_0^. hdw$ and $\int_0^. \mathscr{H}(\chi)dw$  are considered in the sense of It\^o.
\medskip\\
\noindent We assume the following:
\begin{enumerate}
\item[$H_1$:] $h \in \mathcal{N}^2_w(0,T,H^{1}(D))$\footnotemark[1].
\footnotetext[1]{For a given separable Hilbert space $X$, we denote by $\mathcal{N}^2_w(0,T,X)$ the space of predictable $X$-valued processes endowed with the norm  $||\phi||^2_{\mathcal{N}^2_w(0,T,X)}=E\left[\int_0^T||\phi||^2_Xdt\right]$ (see Da Prato-Zabczyk \cite{DaPratoZabczyk} p.94).}
\item[$H_2$:] $\tilde{\ae}=I_d+\alpha$ where $I_d:\R\rightarrow \R$ is the identity function and  $\ae\, :\,\R\to\R$ is a Lipschitz continuous, coercive and non-decreasing function such that $\alpha(0)=0$.
\item[$H_3$:] $\chi_0,\vartheta_0\in H^1(D).$
\item[$H_4$:] $\mathscr{H}:H^1(D)\to H^1(D)$ is a Lipschitz continuous mapping with Lipschitz constant $C_{\mathscr{H}}>0$.
\end{enumerate}

\subsection{State of the art}
\noindent In the deterministic case, i.e., when $h=\mathscr{H}=0$, one application of such nonlinear evolution system is the description of phase transition phenomena, including irreversible phase changes (for instance, solidification of glue, cooking an egg,...) see \cite{BonfantiFremondLuterotti} for further details.\\
Let us mention that Barenblatt's type equations, (namely $f(\partial_t \chi)-\Delta\chi=0$ where $f$ is a non-decreasing function), were initially studied by {\sc G.I. Barenblatt} for the theory of fluids in elasto-plastic porous medium \cite{Barenblatt}, under the assumption that the porous medium is irreversibly deformable. After that, intensive studies have been carried out on this type of equations, see, e.g., \cite{HulshofVazquez,Ig,KaminPeletierVazquez} for more details. Moreover, this type of equations appear in various applications: irreversible phase change modeling \cite{Pt}, reaction-diffusion with absorption problems in Biochemistry \cite{Pt}, irreversible damage and fracture evolution analysis \cite{BBLR,BBL,OMDL} and recently in constrained stratigraphic problems in Geology \cite{AGLV3,AGLV1,AGLV2,AGV4,Va}.\\
\noindent 
Concerning the study of Barenblatt equations with a stochastic force term, a few papers have been written. To the best of our knowledge, none of them proposes the study of the coupling with a random heat equation. Let us mention \cite{AdimurthiSeamVallet}, where the authors were interested in a Barenblatt equation with stochastic coefficients. In \cite{BauzetGiacomoniVallet}, the authors proposed an existence and uniqueness result for a stochastic Barenblatt equation under Dirichlet boundary conditions in the case of additive and multiplicative It\^{o} type noise. After that, well-posedness theory for stochastic abstract problems of Barenblatt's type has been investigated in \cite{BauzetVallet}. More recently, an extension of \cite{BauzetGiacomoniVallet} has been proposed in \cite{BLM}, by considering Neumann boundary conditions  and additionally the presence of a nonlinear source term.\\

\subsection{Goal of the study}
In the study of composite or bonded structures, temperature effects in the evolution of damage at the interface can not be ignored, it is even a fundamental coupling \cite{BBL,ZC}. Additionally, the introduction of stochastic and random effects is also important from a modeling point of view in order to take into account several phenomena such as microscopic fluctuations, random forcing effects of interscale interactions. For these reasons, the aim of the present work is to study the coupling between a stochastic Barenblatt equation and a random heat one under Neumann boundary conditions.  
The idea is to extend the results of \cite{BLM} for a stochastic Barenblatt equation by proposing an existence and uniqueness result for the coupled system. \\

\subsection{General notations}
\noindent For the sake of clarity, let us make precise some useful notations :
\begin{description}
\item $Q= (0,T)\times D$.
\item $x.y$ the usual scalar product of $x$ and $y$ in $\R^{d}$.
\item  $\mathscr{D}(D)=\mathscr{C}^{\infty}_c(D)$ and $\mathscr{D}'(D)$ the space of distributions on $D$.
\item  $||.||$ and $(.,.)$ respectively the usual norm and the scalar product in $L^2(D)$.
\item $E[.]$ the expectation, \textit{i.e.} the integral over $\Omega$ with respect to the probability measure $P$.
\item $\di C_{\alpha}>0$ the Lipschitz constant of $\ae$.
\item $\di \bar{C}_{\alpha}>0$ the coerciveness constant of $\ae$ which satisfies for any $x,y$ in $\R$, 
\begin{equation*}\label{coerci}
\big(\ae(x)-\ae(y)\big)(x-y)\geqslant \bar{C}_{\alpha}(x-y)^2.
\end{equation*}
\item $\bar{C}_{\tilde{\ae}}>0$  the coerciveness constant of $\tilde{\ae}$ which satisfies for any $x,y$ in $\R$, 
\begin{equation*}\label{coerci1}
\big(\tilde{\ae}(x)-\tilde{\ae}(y)\big)(x-y)\geqslant \bar{C}_{\tilde{\alpha}}(x-y)^2.
\end{equation*}
\end{description}

\subsection{Concept of solution and main results of the paper}
Let us introduce the concept of solutions we are interested in for System (\ref{eqc1}) and System (\ref{eqc123}).

\begin{defn}\label{ac1} Any pair of predictable processes  $(\vartheta,\chi)\in \left(\mathcal{N}^{2}_{w}(0,T,H^{1}(D))\right)^2$ such that $\vartheta\in L^{2}\big(\Omega, H^1(Q)\big)$ and $\partial_t(\chi-\int_0^.hdw)\in L^2(\Omega\times Q)$, is a solution of System (\ref{eqc1}) if $t$-almost everywhere in $(0,T)$, $P$-almost surely in $\Omega$, the following variational formulation holds: for any $v\in H^{1}(D),$
\begin{align}
\di\int_{D}\partial_t\vartheta v dx+\int_{D}\partial_t(\chi-\int_0^. h dw)vdx+\int_D\nabla \vartheta.\nabla vdx=0\label{dv1}\\
\di\int_{D}\tilde{\ae}\Big( \partial_t(\chi-\int_0^. h dw)\Big)vdx+\int_D\nabla \chi.\nabla vdx=\int_D\vartheta vdx,\label{dv2}
\end{align}
with $\chi(0,.)=\chi_{0}\in H^{1}(D)$ and $\vartheta(0,.)=\vartheta_{0}\in H^{1}(D)$.
\end{defn}

\begin{defn}\label{ac12} Any pair of predictable processes $(\vartheta,\chi)\in \left(\mathcal{N}^{2}_{w}(0,T,H^{1}(D))\right)^2$ such that $\vartheta\in L^{2}\big(\Omega, H^1(Q)\big)$ and $\partial_t(\chi-\int_0^. \mathscr{H}(\chi)dw)\in L^2(\Omega\times Q)$, is a solution of System (\ref{eqc123}) if $t$-almost everywhere in $(0,T)$, $P$-almost surely in $\Omega$, the following variational formulation holds: for any $v\in H^{1}(D),$
\begin{align*}\label{dv}
\di\int_{D}\partial_t\vartheta v dx+\int_{D}\partial_t(\chi-\int_0^. \mathscr{H}(\chi)dw)vdx+\int_D\nabla \vartheta.\nabla vdx=0\\
\di\int_{D}\tilde{\ae}\Big( \partial_t(\chi-\int_0^. \mathscr{H}(\chi)dw)\Big)vdx+\int_D\nabla \chi.\nabla vdx=\int_D\vartheta vdx,
\end{align*}
with $\chi(0,.)=\chi_{0}\in H^{1}(D)$ and $\vartheta(0,.)=\vartheta_{0}\in H^{1}(D) $.
\end{defn}

 \begin{rem}\label{cid} We will see later on that  the respective solutions of (\ref{eqc1}) and (\ref{eqc123}) belong to the space $L^{2}\big(\Omega, \mathscr{C}([0,T], L^{2}(D))\big)$. Thus,  they satisfy the initial condition in the following sense:
\begin{eqnarray*}
\text{P-a.s, in }\Omega \quad \quad \chi(t=0,.)=\lim_{t\rightarrow 0}\chi(t,.)\text{ in }L^{2}(D)\\ \text{and} \quad\text{P-a.s, in }\Omega\quad \quad\vartheta(t=0,.)=\lim_{t\rightarrow 0}\vartheta(t,.)\text{ in }L^{2}(D).
\end{eqnarray*}
\end{rem}

\noindent The results we want to prove in the sequel are the following:
\begin{thm}\label{EUc1}
Under assumptions $H_1$ to $H_3$, there exists a unique pair $(\vartheta,\chi)$ solution of System (\ref{eqc1}) in the sense of Definition \ref{ac1}.
\end{thm}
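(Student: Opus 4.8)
The plan is to turn the stochastic system into a family of pathwise random elliptic problems and to construct the solution by the semi-implicit time discretization announced in the abstract. The first step is the change of unknown $u:=\chi-\int_0^\cdot h\,dw$. Since $h\in\mathcal{N}^2_w(0,T,H^1(D))$ by $H_1$, the It\^o integral $S:=\int_0^\cdot h\,dw$ defines an adapted continuous $H^1(D)$-valued process, and in the variable $u$ the stochastic differentials disappear from the time derivative: the requirement $\partial_t(\chi-\int_0^\cdot h\,dw)\in L^2(\Omega\times Q)$ of Definition \ref{ac1} becomes $\partial_t u\in L^2(\Omega\times Q)$, and, writing $\chi=u+S$, the variational formulation (\ref{dv1})--(\ref{dv2}) turns into a random system in $(\vartheta,u)$ whose only stochastic ingredient is the \emph{known} forcing $S$. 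This de-randomization is what makes the deterministic monotone-operator machinery applicable $\omega$ by $\omega$.

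Next I would discretize in time. Fixing $N\in\N$, $\Delta t=T/N$, $t_n=n\Delta t$ and $S_n:=\int_0^{t_n}h\,dw$, the scheme reads: given $\mathcal{F}_{t_n}$-measurable $(\vartheta_n,u_n)$, find $(\vartheta_{n+1},u_{n+1})$ such that for all $v\in H^1(D)$,
\begin{align*}
\int_D\frac{\vartheta_{n+1}-\vartheta_n}{\Delta t}\,v\,dx+\int_D\frac{u_{n+1}-u_n}{\Delta t}\,v\,dx+\int_D\nabla\vartheta_{n+1}\cdot\nabla v\,dx&=0,\\
\int_D\tilde{\ae}\Big(\frac{u_{n+1}-u_n}{\Delta t}\Big)\,v\,dx+\int_D\nabla(u_{n+1}+S_{n+1})\cdot\nabla v\,dx&=\int_D\vartheta_{n+1}\,v\,dx,
\end{align*}
after which one sets $\chi_{n+1}:=u_{n+1}+S_{n+1}$. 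Solvability of each step is the first technical point: writing $z_n:=(u_{n+1}-u_n)/\Delta t$, one eliminates $\vartheta_{n+1}$ through the coercive resolvent problem $(I-\Delta t\Delta)\vartheta_{n+1}=\vartheta_n-\Delta t\,z_n$ coming from the first equation and substitutes it into the second; this leaves a single equation for $z_n$ governed by a strictly monotone, coercive and Lipschitz operator on $H^1(D)$, the coercivity coming from $H_2$ through $\bar C_{\tilde{\ae}}$ together with the Neumann Laplacian. Existence and uniqueness of $z_n$, hence of $(\vartheta_{n+1},u_{n+1})\in H^1(D)\times H^1(D)$, then follow from the theory of maximal monotone operators, and $\mathcal{F}_{t_{n+1}}$-measurability is preserved since the resulting solution map is deterministic and continuous.

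The core of the argument is a priori estimates uniform in $\Delta t$. Testing the first discrete equation with $\vartheta_{n+1}$ and the second with $z_n$, the coupling terms $\pm\int_D\vartheta_{n+1}z_n\,dx$ cancel; the identity $a(a-b)=\tfrac12(a^2-b^2+(a-b)^2)$ applied to the difference quotients of $\vartheta$ and $u$ produces telescoping energies, and the coercivity gives $\int_D\tilde{\ae}(z_n)z_n\,dx\geqslant\|z_n\|^2$. Summing over $n$, the only stochastic contribution is $\sum_n\int_D\nabla S_{n+1}\cdot\nabla(u_{n+1}-u_n)\,dx$, which I would treat by a discrete summation by parts: the adaptedness of $(u_n)$ makes the resulting martingale increments $\int_D\nabla(S_{n+1}-S_n)\cdot\nabla u_n\,dx$ vanish in expectation, while the boundary terms are absorbed by Young's inequality together with the It\^o isometry $\E[\|\nabla S_N\|^2]=\E\big[\int_0^T\|\nabla h\|^2\,dt\big]<\infty$ from $H_1$. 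This yields a bound
\begin{equation*}
\E\Big[\max_n\|\vartheta_n\|^2\Big]+\E\Big[\sum_n\|\nabla\vartheta_{n+1}\|^2\Delta t\Big]+\E\Big[\max_n\|\nabla u_n\|^2\Big]+\E\Big[\sum_n\|z_n\|^2\Delta t\Big]\leqslant C,
\end{equation*}
with $C$ independent of $N$. From these estimates I would build piecewise-constant and piecewise-affine interpolants and extract weakly (resp.\ weak-$\ast$) convergent subsequences in the relevant Bochner spaces, predictability being inherited in the limit.

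The main obstacle is the passage to the limit in the nonlinear term $\tilde{\ae}(z_n)$, since the uniform bounds give only weak convergence of the discrete velocities, which does not suffice to identify the weak limit of $\tilde{\ae}(z_n)$ with $\tilde{\ae}(\partial_t u)$. To close this gap I would run a Minty--Browder monotonicity argument, exploiting the coercivity constant $\bar C_{\tilde{\ae}}$ to upgrade the weak convergence of $(z_n)$ to strong convergence in $L^2(\Omega\times Q)$ through an $\limsup$ comparison of the discrete energy with the energy of the limit; this simultaneously delivers the nonlinear passage to the limit and shows that the limit pair $(\vartheta,\chi)$ with $\chi=u+S$ solves (\ref{dv1})--(\ref{dv2}) in the sense of Definition \ref{ac1}. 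Uniqueness, finally, follows from a continuous dependence estimate: subtracting the formulations of two solutions, testing the difference of the heat equations with $\vartheta-\hat\vartheta$ and the difference of the Barenblatt equations with $\partial_t(u-\hat u)$, the coercivity of $\tilde{\ae}$ and a Gronwall argument (after taking expectations) force the two solutions to coincide. This same estimate is precisely the continuous dependence on the data that will later serve to treat the multiplicative system (\ref{eqc123}).
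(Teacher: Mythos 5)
Your overall architecture matches the paper's: semi-implicit time discretization, energy estimates in which the coupling terms cancel after testing the two equations with $\vartheta_{n+1}$ and the discrete velocity, a maximal-monotonicity (Minty) argument for the nonlinear term, and Gr\"onwall-type energy estimates for uniqueness. Several of your variants are sound and even attractive: working with $u=\chi-\int_0^\cdot h\,dw$ and with the exact values $S_{n+1}=\int_0^{t_{n+1}}h\,dw$ rather than the paper's discrete sums $B_{n+1}=\sum_{k}(w_{k+1}-w_k)h_k$ makes the stochastic contribution to the discrete energy reduce, after Abel summation and the vanishing of the martingale increments $E\bigl[\int_D\nabla(S_{n+1}-S_n)\cdot\nabla u_n\,dx\bigr]$, to the single boundary term $E\bigl[\int_D\nabla S(T)\cdot\nabla u_N\,dx\bigr]$, which is controlled by the It\^o isometry and passes to the limit against the fixed function $\nabla S(T)\in L^2(\Omega\times D)$; likewise, eliminating $\vartheta_{n+1}$ through the Neumann resolvent $(I-\Delta t\,\Delta)^{-1}$ and solving a single monotone coercive problem for $z_n$ is a legitimate alternative to the paper's contraction argument, and your uniqueness sketch is essentially the paper's proof of Theorem \ref{191011_thm01} (it is unproblematic because, for two solutions with the same $h$, the stochastic integrals cancel in the difference, so the classical pathwise energy identities apply).

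The genuine gap is at the step you describe as ``a $\limsup$ comparison of the discrete energy with the energy of the limit.'' That comparison requires an energy \emph{identity} for the limit problem, i.e.\ for the equation $\partial_t u+\bar{\chi}-\Delta(u+S)=\vartheta$ tested with $\partial_t u$. This is not admissible under $H_1$ alone: $\partial_t u(t)$ lies only in $L^2(D)$, so it is not a legal test function in the variational formulation, and the obstruction is precisely the stochastic pairing. Indeed $\int_Q\nabla S\cdot\nabla\partial_t u\,dx\,dt$ is undefined (there is no control on $\nabla\partial_t u$); its formal integration by parts in space produces $\int_Q\bigl(\int_0^t\Delta h\,dw\bigr)\partial_t u\,dx\,dt$, which requires $\Delta h$, i.e.\ $h$ with $H^2(D)$ spatial regularity; and integration by parts in time is also unavailable, since $t\mapsto\nabla S(t)$ is a Brownian-type path and $\nabla u$ is not a semimartingale. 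This is exactly why the paper proceeds in two steps (Proposition \ref{energiec1} and Subsection \ref{step2}): it first assumes $h\in\mathcal{N}^2_w(0,T,H^{2}(D))$, so that $U=\chi-\int_0^\cdot h\,dw$ solves pathwise a heat equation with $L^2$ right-hand side $\vartheta-\bar{\chi}+\int_0^\cdot\Delta h\,dw$ and the deterministic energy equality of Brezis applies, which closes the Minty argument; only then does it recover the case $h\in\mathcal{N}^2_w(0,T,H^{1}(D))$ by approximating $h$ by smooth integrands and proving (Lemma \ref{lemcauchy}) that the corresponding solutions form Cauchy sequences, via continuous-dependence estimates involving only $\nabla h$. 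Your proposal contains no counterpart to this regularization-plus-density mechanism, so as written the identification of the weak limit of $\tilde{\alpha}(z_n)$ — and hence the existence part of the theorem — fails for a general $h$ satisfying $H_1$.
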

Moreover, the unique solution of (\ref{eqc1}) satisfies the following stability result which asserts the continuous dependence of the solution with respect to the integrand $h$ in the stochastic noise: 
\begin{prop}\label{contdep} Consider $h,\hat h$ in $\mathcal{N}^{2}_{w}(0,T,H^{1}(D))$ and denote by $(\vartheta,\chi)$ and $(\hat\vartheta,\hat\chi)$ the associated solutions to the System (\ref{eqc1}) in the sense of Definition \ref{ac1} with the respective set of data $(\vartheta_0, \chi_0, h)$ and $(\vartheta_0, \chi_0, \hat h)$. Then, there exists a constant $C_{\alpha}^T>0$, which only depends on $T$, $C_{\alpha}$ and $\bar{C}_{\alpha}$ such that for any $t$ in $[0,T]$
 \begin{align*}
 %&\di \ || \partial_t(\vartheta_n-\vartheta_m)||^2_{L^2(\Omega\times Q_t)} +\big(\bar{C}_{\alpha} +\frac{1}{2}\big)||\partial_{t}(U_{n}-U_{m})||^{2}_{L^2(\Omega\times Q_t)}\\
& \ E\left[||(\vartheta-\hat\vartheta)(t)||^2\right]+E\left[||\nabla (\vartheta- \hat\vartheta)(t)||^2\right]\nonumber\\
&+\ \frac{1}{4}E\left[||(\chi-\hat\chi)(t)||^2\right]+\frac{1}{4}E\left[\|\nabla(\chi-\hat\chi)(t)\|^{2}\right]\\
\leq&\ {C}_{\alpha}^T\left( ||h-\hat h||^2_{L^2(\Omega\times Q_t)}+||\nabla(h- \hat h) ||^2_{L^2(\Omega\times Q_t)}\right),
 \end{align*}
 where $Q_{t}=(0,t)\times D$.
\end{prop}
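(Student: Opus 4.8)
The plan is to work throughout with the differences $U:=\vartheta-\hat\vartheta$, $V:=\chi-\hat\chi$ and $H:=h-\hat h$, together with the auxiliary process $Z:=\partial_t\big(\chi-\hat\chi-\int_0^{\cdot}H\,dw\big)=\partial_t(\psi-\hat\psi)$, where $\psi=\chi-\int_0^{\cdot}h\,dw$. Subtracting the two variational formulations associated with the data $(\vartheta_0,\chi_0,h)$ and $(\vartheta_0,\chi_0,\hat h)$ gives, for every $v\in\V$,
\begin{align*}
(\partial_t U,v)+(Z,v)+(\nabla U,\nabla v)&=0,\\
(\tilde\ae(\partial_t\psi)-\tilde\ae(\partial_t\hat\psi),v)+(\nabla V,\nabla v)&=(U,v).
\end{align*}
Since the common initial data cancel, $U(0)=0$ and $V(0)=0$. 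The decisive structural point is that $U$ is differentiable in time (its equation contains no stochastic integral), whereas $V=\psi-\hat\psi+\int_0^{\cdot}H\,dw$ is an It\^o process with $dV=Z\,dt+H\,dw$ and, at the gradient level, $d(\nabla V)=\nabla Z\,dt+\nabla H\,dw$.

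Next I would derive the basic energy identities by choosing adapted test functions. Taking $v=U$ in the first equation yields $\tfrac12\tfrac{d}{dt}\|U\|^2+\|\nabla U\|^2+(Z,U)=0$, while taking $v=Z$ in the second and invoking the coercivity of $\tilde\ae$ gives $\bar C_{\tilde\ae}\|Z\|^2+(\nabla V,\nabla Z)\le (U,Z)$. The crucial feature is that the cross term $(Z,U)$ produced by the coupling cancels: substituting the first identity into the second leads to
\[
\tfrac12\tfrac{d}{dt}\|U\|^2+\|\nabla U\|^2+\bar C_{\tilde\ae}\|Z\|^2+(\nabla V,\nabla Z)\le 0.
\]
The remaining term $(\nabla V,\nabla Z)$ is handled by It\^o's formula applied to $\|\nabla V\|^2$: from $d(\nabla V)=\nabla Z\,dt+\nabla H\,dw$ one gets $2(\nabla V,\nabla Z)\,dt=d\|\nabla V\|^2-2(\nabla V,\nabla H)\,dw-\|\nabla H\|^2\,dt$. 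Integrating the inequality above on $(0,t)$, taking the expectation (so that the It\^o integral, being a martingale, vanishes) and using $U(0)=\nabla V(0)=0$, I obtain the master estimate
\[
\tfrac12 E\|U(t)\|^2+E\int_0^t\|\nabla U\|^2+\bar C_{\tilde\ae}\,E\int_0^t\|Z\|^2+\tfrac12 E\|\nabla V(t)\|^2\le \tfrac12 E\int_0^t\|\nabla H\|^2.
\]

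This already controls $E\|U(t)\|^2$, $E\|\nabla V(t)\|^2$ and $E\int_0^t\|Z\|^2$ by $E\int_0^t\|\nabla H\|^2$. To recover $E\|\nabla U(t)\|^2$ I would test the first equation with $\partial_t U$ (equivalently with $-\Delta U$), obtaining $\tfrac12\tfrac{d}{dt}\|\nabla U\|^2+\|\partial_t U\|^2=-(Z,\partial_t U)\le\tfrac12\|\partial_t U\|^2+\tfrac12\|Z\|^2$, hence after integration $E\|\nabla U(t)\|^2\le E\int_0^t\|Z\|^2$, which is already controlled. Finally, for $E\|V(t)\|^2$ I apply It\^o to $\|V\|^2$, giving $E\|V(t)\|^2=2E\int_0^t(V,Z)+E\int_0^t\|H\|^2$; bounding $2(V,Z)\le\|V\|^2+\|Z\|^2$ and inserting the bound on $E\int_0^t\|Z\|^2$, a Gronwall argument in $t$ yields $E\|V(t)\|^2\le C_\ae^T\big(E\int_0^t\|H\|^2+E\int_0^t\|\nabla H\|^2\big)$. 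Collecting the four bounds gives the claimed inequality, the $\tfrac14$ factors and the constant $C_\ae^T$ absorbing the Young and Gronwall constants, which depend only on $T$, $C_\ae$ and $\bar C_\ae$.

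The main obstacle is twofold, and both parts are analytic rather than conceptual. First, the test functions $Z$, $\partial_t U$ (or $-\Delta U$) and the gradient-level It\^o formula require more regularity on $\partial_t\psi$, $\partial_t U$ and $\nabla Z$ than Definition \ref{ac1} a priori provides; I expect these manipulations to be rigorous only on the semi-implicit time-discrete approximation used to prove Theorem \ref{EUc1}, the estimate then being passed to the limit as $\t\to 0$, with a discrete It\^o / Abel-summation identity replacing the continuous It\^o formula. Second, one must justify carefully that each stochastic integral arising after It\^o's formula is a genuine martingale with zero expectation; this relies on $H_1$, the predictability of the processes and the $\mathcal N^2_w$-integrability of the integrands.
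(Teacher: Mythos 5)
Your energy identities are correct, and your route is genuinely different from the paper's. The paper (Lemma \ref{lemdep1}, extracted from the proof of Lemma \ref{lemcauchy}) tests the first equation with the difference $\vartheta_n-\vartheta_m$ and with its backward difference quotient in time, tests the second with $U_n-U_m$ and with its difference quotient, estimates the two coupling terms by Young's inequality, and closes with Gr\"onwall applied to a combined quantity. You instead pair the test functions $U$ and $Z$ so that the coupling term $(Z,U)$ cancels exactly, dispose of $(\nabla V,\nabla Z)$ through It\^o's formula for $\|\nabla V\|^2$ (whose correction term produces precisely the $\|\nabla H\|^2$ source), and need Gr\"onwall only for $E\left[\|V(t)\|^2\right]$. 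This is leaner: one master inequality controls $E\left[\|U(t)\|^2\right]$, $E\left[\|\nabla V(t)\|^2\right]$ and $E\int_0^t\|Z\|^2\,ds$ simultaneously, and your constants involve only $T$ and $\bar{C}_{\tilde{\alpha}}$. Notably, the cancellation you exploit is the one the paper uses in its discrete a priori bound (Proposition \ref{fae}) and in Proposition \ref{energiec1}, but not in its own continuous-dependence argument.

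The genuine gap is in your rigor plan, and you identified only half of it. You are right that $Z(t)$ and $\partial_t U(t)$ are merely $L^2(D)$-valued, so testing with them and the gradient-level It\^o formula are formal (the It\^o step for $\|V\|^2$ is, by contrast, rigorous, since $V$ is an $L^2(D)$-valued It\^o process with drift $Z\in L^2(\Omega\times Q)$). But your fix --- run everything on the semi-implicit scheme and let $\Delta t\to 0$ --- is not available for general $h\in\mathcal{N}^2_w(0,T,H^1(D))$: the convergence of the scheme to the solution of (\ref{eqc1}), i.e.\ the identification of the weak limit of $\alpha\big(\partial_t(\tilde{\chi}^{\Delta t}-\tilde{B}^{\Delta t})\big)$ by maximal monotonicity, is established in Proposition \ref{energiec1} only for $h\in\mathcal{N}^2_w(0,T,H^2(D))$, because it requires $\Delta B^{\Delta t}$, hence $\Delta h$. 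So a discrete estimate passed to the limit attaches to the solutions named in the statement only for such smooth $h$. The paper closes this with a second approximation layer that your plan omits: approximate $h,\hat h$ by integrands in $\mathcal{N}^2_w(0,T,\mathscr{C}^\infty_c(\overline{D}))$, prove the estimate for the corresponding solutions --- at the continuum level, where backward difference quotients in time replace your inadmissible test functions and their limits are justified by the time-continuity results of Propositions \ref{CoGr1} and \ref{CoGr} --- and then transfer it to $h,\hat h$ via the convergences of Corollary \ref{191011_cor1} combined with the uniqueness result of Theorem \ref{191011_thm01}. Your cancellation argument fits inside this structure essentially verbatim (run it on the smooth-data solutions, or on the scheme for smooth data); without the density-plus-uniqueness layer, however, the proof does not reach the solutions the Proposition is actually about.
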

 \begin{thm} \label{Non} Under Assumptions $H_2$ to $H_4$, there exists a unique pair $(\vartheta, \chi)$  solution of Problem (\ref{eqc123}) in the sense of Definition \ref{ac12}.
\end{thm}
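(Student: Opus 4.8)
The plan is to obtain Theorem \ref{Non} from the additive theory via a fixed-point argument whose engine is the continuous dependence estimate of Proposition \ref{contdep}. The key observation is that the multiplicative system is an additive one in which the integrand has been ``frozen'' along the solution: if we knew $\chi$ in advance, then $h=\mathscr{H}(\chi)$ would be an admissible additive integrand. I will therefore introduce the solution operator of the additive problem and look for a fixed point.

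First I would define the map $\mathcal{K}$ on the Banach space $X:=\mathcal{N}^2_w(0,T,H^1(D))$. Given $\bar\chi\in X$, set $h:=\mathscr{H}(\bar\chi)$. By the Lipschitz assumption $H_4$ one has $\|\mathscr{H}(\bar\chi)\|_{H^1(D)}\leq \|\mathscr{H}(0)\|_{H^1(D)}+C_{\mathscr{H}}\|\bar\chi\|_{H^1(D)}$, and since $\mathscr{H}$ is continuous from $H^1(D)$ to $H^1(D)$ the process $\mathscr{H}(\bar\chi)$ is predictable; hence $\mathscr{H}(\bar\chi)\in X$ and assumption $H_1$ is satisfied for this $h$. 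Theorem \ref{EUc1} then yields a unique solution $(\vartheta,\chi)$ of the additive System (\ref{eqc1}) associated with $(\vartheta_0,\chi_0,\mathscr{H}(\bar\chi))$, and I set $\mathcal{K}(\bar\chi):=\chi\in X$. By construction, $(\vartheta,\chi)$ solves the multiplicative System (\ref{eqc123}) if and only if $\chi$ is a fixed point of $\mathcal{K}$ and $(\vartheta,\chi)$ is the corresponding additive solution, so the whole statement reduces to the existence and uniqueness of a fixed point of $\mathcal{K}$.

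Next I would quantify the contraction. Applying Proposition \ref{contdep} with $h=\mathscr{H}(\bar\chi)$, $\hat h=\mathscr{H}(\hat{\bar\chi})$, discarding the nonnegative $\vartheta$-terms on the left-hand side, and using $\|\mathscr{H}(\bar\chi)-\mathscr{H}(\hat{\bar\chi})\|_{H^1(D)}^2\leq C_{\mathscr{H}}^2\|\bar\chi-\hat{\bar\chi}\|_{H^1(D)}^2$ gives, for every $t\in[0,T]$,
\[
E\big[\|(\mathcal{K}(\bar\chi)-\mathcal{K}(\hat{\bar\chi}))(t)\|_{H^1(D)}^2\big]\leq K\int_0^t E\big[\|(\bar\chi-\hat{\bar\chi})(s)\|_{H^1(D)}^2\big]\,ds,
\]
with $K:=4\,C_{\alpha}^T C_{\mathscr{H}}^2$, where I used that the right-hand side of Proposition \ref{contdep} equals $E[\int_0^t\|h-\hat h\|_{H^1(D)}^2\,ds]$. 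Iterating this inequality $n$ times and evaluating the resulting iterated time integral over the simplex $\{0\leq s_n\leq\cdots\leq s_1\leq t\}$ by Fubini leads to
\[
\|\mathcal{K}^n(\bar\chi)-\mathcal{K}^n(\hat{\bar\chi})\|_X^2\leq \frac{(KT)^n}{n!}\,\|\bar\chi-\hat{\bar\chi}\|_X^2 .
\]
Since $(KT)^n/n!\to 0$, for $n$ large enough $\mathcal{K}^n$ is a strict contraction on the complete space $X$, so the generalized Banach fixed-point theorem provides a unique fixed point $\chi^\ast\in X$ of $\mathcal{K}$.

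Finally, existence follows by taking $(\vartheta^\ast,\chi^\ast)$ to be the additive solution associated with $h=\mathscr{H}(\chi^\ast)$: as $\chi^\ast=\mathcal{K}(\chi^\ast)$, this pair solves (\ref{eqc123}) in the sense of Definition \ref{ac12}. For uniqueness, any solution $(\vartheta,\chi)$ of (\ref{eqc123}) has $\chi$ a fixed point of $\mathcal{K}$, hence $\chi=\chi^\ast$, and then $\vartheta$ is uniquely determined by Theorem \ref{EUc1} applied with $h=\mathscr{H}(\chi^\ast)$. The genuinely delicate point is that the norm on $X$ is only $L^2$ in time, so a single use of Proposition \ref{contdep} is not a contraction; the argument succeeds only because the right-hand side of that estimate is a \emph{time integral}, which upon iteration upgrades the pointwise-in-time bound to the summable factorial decay $(KT)^n/n!$.
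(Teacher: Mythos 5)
Your proof is correct, and its skeleton coincides with the paper's: both reduce the multiplicative problem to finding a fixed point of the map $S\mapsto\chi_S$ obtained by solving the additive system (Theorem \ref{EUc1}) with the frozen integrand $h=\mathscr{H}(S)$, and both extract the quantitative input from Proposition \ref{contdep} combined with the Lipschitz bound of $H_4$, arriving at the same Volterra-type inequality with the same constant $K=4C_{\alpha}^TC_{\mathscr{H}}^2$. The only divergence is the final step: the paper turns this Volterra estimate into a one-step contraction of $g\circ f$ by passing to the equivalent exponentially weighted (Bielecki) norm $v\mapsto\int_0^Te^{-at}E\big[\|v(t)\|^2_{H^1(D)}\big]dt$ with $a>4C_{\alpha}^TC_{\mathscr{H}}^2$, whereas you iterate the inequality over the time simplex to get $\|\mathcal{K}^n(\bar\chi)-\mathcal{K}^n(\hat{\bar\chi})\|_X^2\leq \frac{(KT)^n}{n!}\|\bar\chi-\hat{\bar\chi}\|_X^2$ and invoke the version of Banach's theorem for maps having a contractive iterate; your factorial bound is computed correctly. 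The two devices are interchangeable here: the weighted norm avoids induction and keeps a single-step contraction, while your iteration stays in the original norm of $\mathcal{N}^{2}_{w}(0,T,H^{1}(D))$ and needs no auxiliary norm. A small point in your favour is that you make explicit the equivalence between solutions of (\ref{eqc123}) in the sense of Definition \ref{ac12} and fixed points of $\mathcal{K}$ (in particular, that any solution of the multiplicative system gives a fixed point, which is exactly what the uniqueness assertion requires); the paper leaves this identification implicit.
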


\subsection{Outline of the paper}
\noindent The paper is organized as follows. Firstly, we are interested in showing the existence of a couple ($\vartheta, \chi$) solution of System (\ref{eqc1}). To do so, the approach is the following one: we approximate our additive stochastic system by using a semi-implicit time discretization scheme with a parameter $\Delta t>0$.\\
After deriving stability estimates satisfied by the time approximations of the couple ($\vartheta, \chi$), our aim is to pass to the limit on the obtained discrete system with respect to the time step $\Delta t$. Note that due to the random variable, classical results of compactness do not hold, and the main difficulty is in the identification of the nonlinear term's limit associated with the discretization of $\alpha\big(\partial_t(\chi-\int_0^.hdw)\big)$. Using arguments on maximal monotone operators, one is able to handle this difficulty.\\
Secondly, the uniqueness result for (\ref{eqc1}) is proven by using classical energy estimates well known for the heat equation and adapted to the random and stochastic case. This allows us to show additionally at the limit on the discretization parameter $\Delta t$ that the couple ($\vartheta, \chi$) depends continuously on the data.
\noindent Finally, exploiting this stability result of the solution with respect to the data, we are able to extend (thanks to a fixed point argument) our result of existence and uniqueness to the multiplicative case, that is the well-posedness of Problem (\ref{eqc123}).\\

\section{Time approximation of the additive case }\label{Implicittimec1}
The result of existence of a solution for Problem (\ref{eqc1}) is based on an implicit time discretization scheme for the deterministic part and an explicit one for the It\^o part. To do so, let us introduce notations used for the discretization procedure.

\subsection{Notations and preliminary results}
We consider $X$ a separable Banach space, $N\in \N^{*}$, set $\h=\frac{T}{N}$ and $t_n=n\h$ with $n\in\{0,...,N\}$. For any sequence $(x_n)_{0\leq n \leq N}\subset X$, let us denote
\begin{eqnarray*}
 x^{\Delta t}&=&\displaystyle \sum^{N-1}_{k=0}x_{k+1}\mathds{1}_{[\tk,\tkp)},\\
  x_{\Delta t}&=&\displaystyle \sum^{N-1}_{k=0}x_{k}\mathds{1}_{[\tk,\tkp)}=  x^{\Delta t}(.-\Delta t),\\
 \tilde{x}^{\Delta t}&=&\displaystyle \sum^{N-1}_{k=0}\left[\frac{x_{k+1}-x_{k}}{\Delta t}\left(.-\tk\right)+x_{k}\right]\mathds{1}_{[\tk,\tkp)}, \\
\displaystyle\frac{\partial\tilde{x}^{\Delta t}}{\partial t}&=&\displaystyle \sum^{N-1}_{k=0}\frac{x_{k+1}-x_{k}}{\Delta t}\mathds{1}_{[\tk,\tkp)},
 \end{eqnarray*}
with the convention that $t_{-1}=-\h$, for $t<0$, $\tilde{x}^{\Delta t}(t_0)=x_0$ and $x^{\Delta t}(t_N)=\tilde{x}^{\Delta t}(t_N)=x_N$. Elementary calculations yield for an arbitrary constant $C>0$ independent of $\h$
\begin{eqnarray*}
&&\displaystyle\|x^{\Delta t}\|^{2}_{L^{2}(0,T;X)}=\Delta t \displaystyle \sum^{N}_{k=1}\|x_{k}\|^{2}_{X}\quad;\quad \|\tilde{x}^{\Delta t}\|^{2}_{L^{2}(0,T;X)}\leq C\Delta t \displaystyle \sum^{N}_{k=0}\|x_{k}\|^{2}_{X}; \\
&&\displaystyle\|x^{\Delta t}-\tilde{x}^{\Delta t}\|^{2}_{L^{2}(0,T;X)}=\Delta t\displaystyle \sum^{N-1}_{k=0}\|x_{k+1}-x_{k}\|^{2}_{X};\\
&&\displaystyle\|x^{\Delta t}(.-\Delta t)-{x}^{\Delta t}\|^{2}_{L^{2}(0,T;X)}=\Delta t\displaystyle \sum^{N-1}_{k=0}\|x_{k+1}-x_{k}\|^{2}_{X}; \\
&&\displaystyle\Big\|\frac{\partial\tilde{x}^{\Delta t}}{\partial t}\Big\|^{2}_{L^{2}(0,T;X)}=\displaystyle\frac{1}{\Delta t}\displaystyle \sum^{N-1}_{k=0}\|x_{k+1}-x_{k}\|^{2}_{X}; \\
&&\displaystyle\|x^{\Delta t}\|_{L^{\infty}(0,T;X)}=\displaystyle\max_{k=1,..,N} \|x_{k}\|_{X}\text{ and } \|\tilde{x}^{\Delta t}\|_{L^{\infty}(0,T;X)}=\displaystyle\max_{k=0,..,N}\|x_{k}\|_{X}.
\end{eqnarray*}
We will use the following notations for the discretization of the data for any $n$ in $\{0,...,N\}:$
\begin{align*}
&w_n=w(\tn), \ h_n=\frac{1}{\h}\int_{\tnm}^{\tn}h(s,.)ds, \ B_{n}=\sum_{k=0}^{n-1}(w_{k+1}-w_{k})h_{k},\\
\end{align*}
with the convention that $t_{-1}=-\Delta t$ and $h(s,.)=0$ if $s<0$.
\begin{rem}
As $h$ is predictable with values in $H^1(D)$ then $h_{n}$ belongs to \\$L^2\left((\Omega,\Fn);H^1(D)\right)$ for any $n$ in $\{0,...,N\}$.
\end{rem}

\begin{rem}
For any $n$ in $\{0,...,N\}$, $B_{n}=\di \int_{0}^{t_{n}}h_{\Delta t}(s)dw(s)$.\\ Indeed, as $h_k$ is $\mathcal{F}_{t_{k}}$-measurable, one has
$$B_{n}=\sum_{k=0}^{n-1}\int_{t_{k}}^{t_{k+1}}h_kdw(s)=\int_{0}^{t_{n}}\sum_{k=0}^{n-1}h_k\mathds{1}_{[t_{k}, t_{k+1}[}(s)dw(s)=\int_{0}^{t_{n}}h_{\Delta t}(s)dw(s).$$
\end{rem}

\begin{lem}\label{hlemma1c1} There exists a constant $C\geqslant 0$ independent of $\Delta t$ such that for any $n$ in $\{0,...,N\}$
\begin{equation*}
E\left[\sum_{k=0}^{n}\|h_{k}\|_{H^1(D)}^{2}\right]\leq \frac{C}{\h}.\quad 
\end{equation*}
\end{lem}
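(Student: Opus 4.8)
The plan is to pass from the time-averaged data $h_n$ back to the underlying process $h$ and to exploit the convexity of the squared $\V$-norm. First I would record that the convention $h(s,\cdot)=0$ for $s<0$ forces $h_0=\frac{1}{\h}\int_{-\h}^{0}h(s,\cdot)\,ds=0$, so that only the indices $k\geq 1$ actually contribute to the sum. For such $k$, I would view $h_k=\frac{1}{\h}\int_{\tkm}^{\tk}h(s,\cdot)\,ds$ as an average over an interval of length $\h$ and apply Jensen's inequality to the convex functional $v\mapsto\|v\|^2_{\V}$ with respect to the probability measure $\frac{1}{\h}\,ds$ on $[\tkm,\tk]$, which gives
\begin{equation*}
\|h_k\|^2_{\V}\leq\frac{1}{\h}\int_{\tkm}^{\tk}\|h(s,\cdot)\|^2_{\V}\,ds.
\end{equation*}

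Summing this estimate over $k=1,\dots,n$, the intervals $[\tkm,\tk]$ are disjoint with union $[0,\tn]$, so the right-hand side collapses into a single integral and I would obtain
\begin{equation*}
\sum_{k=0}^{n}\|h_k\|^2_{\V}=\sum_{k=1}^{n}\|h_k\|^2_{\V}\leq\frac{1}{\h}\int_0^{\tn}\|h(s,\cdot)\|^2_{\V}\,ds\leq\frac{1}{\h}\int_0^{T}\|h(s,\cdot)\|^2_{\V}\,ds.
\end{equation*}
Taking expectation and invoking assumption $H_1$, the right-hand side equals $\frac{1}{\h}\|h\|^2_{\S}$, so the claim follows with the choice $C:=\|h\|^2_{\S}=\E\big[\int_0^T\|h\|^2_{\V}\,dt\big]$, which is finite by $H_1$ and manifestly independent of $\h$.

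This is essentially a one-line estimate, so I do not anticipate any genuine obstacle. The only point deserving a word of care is the use of Jensen's inequality for a Hilbert-space valued Bochner integral, which is legitimate because $v\mapsto\|v\|^2_{\V}$ is convex and continuous; one could equally invoke the Cauchy--Schwarz inequality, splitting $h_k=\frac{1}{\h}\int_{\tkm}^{\tk}h\cdot 1\,ds$ and estimating by $\big(\frac{1}{\h}\int_{\tkm}^{\tk}\|h\|_{\V}^2\,ds\big)\big(\frac{1}{\h}\int_{\tkm}^{\tk}1\,ds\big)$, to reach the identical bound.
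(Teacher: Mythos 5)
Your proof is correct. The paper does not prove this lemma inline but simply cites Lemma 2.3 of \cite{BLM}, and your argument (Cauchy--Schwarz/Jensen on each time average $h_k$, summation over the disjoint intervals $[\tkm,\tk]$, expectation, and assumption $H_1$, with the observation that $h_0=0$ by the convention $h(s,\cdot)=0$ for $s<0$) is precisely the standard computation behind that cited result, so it supplies a self-contained proof of the same kind.
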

\begin{proof} The proof of this result can be found in \cite{BLM} (Lemma 2.3).
\end{proof}

\begin{lem}\label{CVHG} The sequence $(h_{\h})$ converges to $h$ in $\mathcal{N}^{2}_{w}(0, T, \V)$ as the time discretization parameter $\Delta t$ tends to $0$.
\end{lem}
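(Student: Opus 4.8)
The plan is to realize $h_{\Delta t}$ as the image of $h$ under a family of linear operators that is uniformly bounded in $\S$, and to prove the convergence first on a dense subclass. Recall that on $[\tk,\tkp)$ one has $h_{\Delta t}=h_k=\frac{1}{\h}\int_{\tkm}^{\tk}h(s,.)\,ds$, so that $h_{\Delta t}$ is, up to a one-step time shift, a local average of $h$; the target is to show that $\E\big[\int_0^T\|h_{\Delta t}-h\|_{\V}^2\,dt\big]\to 0$ as $\h\to 0$.

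First I would record a uniform bound of a form convenient for linearity. By Jensen's (equivalently Cauchy--Schwarz) inequality, $\|h_k\|_{\V}^2\leq \frac{1}{\h}\int_{\tkm}^{\tk}\|h(s,.)\|_{\V}^2\,ds$, whence, summing and using the convention $h(s,.)=0$ for $s<0$, the telescoping gives $\|h_{\Delta t}\|_{\S}^2=\h\,\E\big[\sum_{k=0}^{N-1}\|h_k\|_{\V}^2\big]\leq \|h\|_{\S}^2$. In particular the maps $\phi\mapsto \phi_{\Delta t}$ are linear and bounded on $\S$ uniformly in $\h$ (this also recovers the bound of Lemma \ref{hlemma1c1}).

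Second, I would prove the convergence on the dense subclass of predictable processes that are continuous in time with values in $\V$; such processes are dense in $\S$, and continuity in $t$ together with adaptedness preserves predictability. For such a process $g$ and fixed $(t,\omega)$, if $t\in[\tk,\tkp)$ then $g_{\Delta t}(t)=\frac{1}{\h}\int_{\tkm}^{\tk}g(s,.)\,ds$ with $\tkm,\tk\to t$ as $\h\to 0$, so $g_{\Delta t}(t)\to g(t)$ in $\V$ by continuity. Dominated convergence in $(t,\omega)$, justified by the uniform bound above (or by uniform continuity of $g$ on $[0,T]$), then yields $g_{\Delta t}\to g$ in $\S$.

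Finally I would combine the two steps by the standard triangle-inequality argument: given $\varepsilon>0$, choose a continuous predictable $g$ with $\|h-g\|_{\S}<\varepsilon$, and estimate $\|h_{\Delta t}-h\|_{\S}\leq\|(h-g)_{\Delta t}\|_{\S}+\|g_{\Delta t}-g\|_{\S}+\|g-h\|_{\S}$. The first term is $\leq \varepsilon$ by the uniform bound applied to $h-g$, the middle term tends to $0$ by the second step, and the last is $<\varepsilon$; letting $\h\to 0$ and then $\varepsilon\to 0$ concludes. The only genuinely delicate point is the density of continuous predictable processes in $\S$ together with the verification that the approximants remain predictable; the averaging-plus-shift structure of $h_{\Delta t}$ is itself harmless in the limit because the shift is of order $\h$.
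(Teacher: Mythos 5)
Your argument is correct, but it follows a genuinely different route from the paper, whose entire proof of Lemma \ref{CVHG} is the citation ``see {\sc Simon} \cite{Simon}, Lemma 12 p.~52'': a deterministic lemma asserting that piecewise-constant approximation by local means converges in $L^{p}(0,T;X)$, applied here (the one-step shift of order $\Delta t$ being harmless) with values in $X=L^{2}(\Omega;H^{1}(D))$. What you write out is essentially the standard proof hiding behind such a citation: a uniform contraction bound for the averaging operators --- which, as you note, also recovers Lemma \ref{hlemma1c1} --- plus convergence on a dense class, plus the triangle inequality. Two points would need routine firming up if your proof were to replace the citation. First, the dense class: the density of time-continuous adapted processes in $\mathcal{N}^{2}_{w}(0,T,H^{1}(D))$ is itself usually derived from the density of elementary predictable processes (Da Prato--Zabczyk \cite{DaPratoZabczyk}); you could equally run your pointwise step directly on elementary processes, for which $g_{\Delta t}(t)\to g(t)$ at every $t$ except the finitely many jump times. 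Second, the dominated-convergence step: the operator-norm bound $\|g_{\Delta t}\|_{\mathcal{N}^{2}_{w}(0,T,H^{1}(D))}\leq\|g\|_{\mathcal{N}^{2}_{w}(0,T,H^{1}(D))}$ alone supplies no dominating function; what works is a two-step argument, namely dominated convergence in $t$ for fixed $\omega$ (dominating by $4\sup_{s}\|g(s,\omega)\|^{2}_{H^{1}(D)}$, finite pathwise by continuity, or by boundedness of an elementary process), followed by dominated convergence in $\omega$ using the \emph{pathwise} Jensen bound $\int_{0}^{T}\|g_{\Delta t}(t)\|^{2}_{H^{1}(D)}dt\leq\int_{0}^{T}\|g(t)\|^{2}_{H^{1}(D)}dt$, which dominates $\int_{0}^{T}\|g_{\Delta t}-g\|^{2}_{H^{1}(D)}dt$ by an $L^{1}(\Omega)$ random variable. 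With these repairs your proof is complete and self-contained; the paper's citation buys brevity, while your approach makes explicit the stochastic-specific points (predictability of $h_{\Delta t}$ thanks to the backward-shifted averaging, and the correct choice of dense subclass of predictable processes) that the appeal to a deterministic lemma leaves implicit.
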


\begin{proof} See {\sc Simon} \cite{Simon}, Lemma 12 p.52.
\end{proof}

\begin{prop}\label{CVBc1} The sequences $(B^{\h})$ and $(\tilde{B}^{\h})$ converge to $\di\int_{0}^{.}hdw$ in \\ $L^{2}(0,T; L^{2}(\Omega, H^1(D)))$ as the time discretization parameter $\Delta t$ tends to $0$.
\end{prop}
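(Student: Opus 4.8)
The plan is to prove the convergence for the piecewise constant interpolant $B^{\h}$ first and then transfer it to the piecewise affine one $\tilde{B}^{\h}$ by estimating their difference. I would write $M(t):=\int_0^t h\,dw$ for the target process and introduce the continuous-time approximation $M^{\h}(t):=\int_0^t h_{\Delta t}\,dw$, which, as noted in the Remark above, satisfies $M^{\h}(\tn)=B_n$ at every node. On each interval $t\in[\tk,\tkp)$ one has $B^{\h}(t)=B_{k+1}=M^{\h}(\tkp)$, so the natural decomposition is
\[
B^{\h}(t)-M(t)=\int_t^{\tkp}h_{\Delta t}\,dw+\int_0^t\big(h_{\Delta t}-h\big)\,dw,
\]
the first term being the lag produced by the piecewise constant interpolation and the second the error in replacing the integrand $h$ by $h_{\Delta t}$.

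For the integrand error I would invoke the It\^o isometry for $\V$-valued integrands against the scalar Brownian motion $w$, which gives, uniformly in $t\in[0,T]$,
\[
\E\big\|M^{\h}(t)-M(t)\big\|_{\V}^2=\E\int_0^t\|h_{\Delta t}-h\|_{\V}^2\,ds\leq\|h_{\Delta t}-h\|^2_{\mathcal{N}^2_w(0,T,\V)};
\]
integrating over $t\in(0,T)$ and applying Lemma \ref{CVHG} drives this term to $0$. For the lag term, since $h_{\Delta t}\equiv h_k$ on $[\tk,\tkp)$, the same isometry yields $\E\|\int_t^{\tkp}h_{\Delta t}\,dw\|_{\V}^2=(\tkp-t)\,\E\|h_k\|_{\V}^2$, and the elementary quadrature $\int_{\tk}^{\tkp}(\tkp-t)\,dt=\h^2/2$ leads to
\[
\int_0^T\E\Big\|\int_t^{\tkp}h_{\Delta t}\,dw\Big\|_{\V}^2\,dt=\frac{\h^2}{2}\sum_{k=0}^{N-1}\E\|h_k\|_{\V}^2\leq\frac{C\,\h}{2},
\]
where the final bound is Lemma \ref{hlemma1c1}. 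Both contributions vanish as $\h\to0$, giving $B^{\h}\to M$ in $L^2(0,T;L^2(\Omega,\V))$.

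It then remains to control $\tilde{B}^{\h}-B^{\h}$. On $[\tk,\tkp)$ a direct computation gives $\tilde{B}^{\h}(t)-B^{\h}(t)=\frac{t-\tkp}{\h}\,(B_{k+1}-B_k)$ with $B_{k+1}-B_k=(w(\tkp)-w(\tk))\,h_k$. Since $h_k$ is $\mathcal{F}_{\tk}$-measurable while the increment $w(\tkp)-w(\tk)$ is independent of $\mathcal{F}_{\tk}$ and has variance $\h$, one obtains $\E\|B_{k+1}-B_k\|_{\V}^2=\h\,\E\|h_k\|_{\V}^2$; combining this with $\int_{\tk}^{\tkp}\big(\tfrac{t-\tkp}{\h}\big)^2\,dt=\h/3$ and Lemma \ref{hlemma1c1} yields
\[
\|\tilde{B}^{\h}-B^{\h}\|^2_{L^2(0,T;L^2(\Omega,\V))}=\frac{\h^2}{3}\sum_{k=0}^{N-1}\E\|h_k\|_{\V}^2\leq\frac{C\,\h}{3}\xrightarrow[\h\to0]{}0,
\]
so that $\tilde{B}^{\h}\to M$ as well. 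The only genuinely delicate ingredient is the It\^o isometry in the $\V$-valued setting against a one-dimensional Brownian motion; granting it, the whole argument reduces to the two quadrature factors $\h^2/2$ and $\h^2/3$ together with the a priori bound of Lemma \ref{hlemma1c1} and the integrand convergence of Lemma \ref{CVHG}.
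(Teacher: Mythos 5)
Your proof is correct, and it is worth noting that the paper itself does not prove Proposition \ref{CVBc1} at all: it simply cites Proposition 2.5 of \cite{BLM}. Your argument therefore serves as a self-contained replacement, and it uses precisely the tools the paper sets up for this purpose: the identity $B_n=\int_0^{t_n}h_{\Delta t}\,dw$ from the preceding remark, the a priori bound $\E\big[\sum_k\|h_k\|^2_{H^1(D)}\big]\leq C/\h$ of Lemma \ref{hlemma1c1}, and the convergence $h_{\Delta t}\to h$ in $\mathcal{N}^2_w(0,T,\V)$ of Lemma \ref{CVHG}. All the computations check out: the decomposition of $B^{\h}(t)-\int_0^t h\,dw$ into a lag term and an integrand-replacement term is exact; the It\^o isometry for predictable $H^1(D)$-valued integrands against the scalar Brownian motion $w$ is the standard one from \cite{DaPratoZabczyk} and applies since $h_{\Delta t}-h$ and $h_{\Delta t}\mathds{1}_{(t,\tkp)}$ are predictable and square-integrable; the quadrature factors $\int_{\tk}^{\tkp}(\tkp-t)\,dt=\h^2/2$ and $\int_{\tk}^{\tkp}\big(\tfrac{t-\tkp}{\h}\big)^2dt=\h/3$ are right; and the identity $\E\|B_{k+1}-B_k\|^2_{\V}=\h\,\E\|h_k\|^2_{\V}$ follows correctly from the independence of $w_{k+1}-w_k$ from the $\F_{\tk}$-measurable $h_k$. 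The only cosmetic omission is the elementary inequality $\|a+b\|^2\leq 2\|a\|^2+2\|b\|^2$ needed to combine the two error contributions for $B^{\h}$, which you clearly intend. One could even remark that your treatment of the affine interpolant gives a rate, $\|\tilde{B}^{\h}-B^{\h}\|^2_{L^2(0,T;L^2(\Omega,\V))}\leq C\h/3$, whereas the statement only requires convergence.
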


\begin{proof} The proof of this result can be found in \cite{BLM} (Proposition 2.5).
\end{proof}

\begin{rem}\label{rcvh}If one assumes that $h$ belongs to $ \mathcal{N}^2_w(0,T,H^{2}(D))$, one shows in the same manner that $\di B^{\h}$ converges strongly to $\int_{0}^{.}hdw$ in  $L^{2}((0,T)\times \Omega, H^{2}(D))$ as $\h$ tends to $0$.
\end{rem}

\subsection{Discretization schemes}
Let $N$ be a positive integer and $n\in \{0,...,N\}$. Using the notations of the previous section, the discretization scheme for (\ref{dv1}) is the following one: for a given small positive parameter $\Delta t$, for $\vartheta_n$, $\chi_n$ in  $L^2\left((\Omega,\Fn);L^2(D)\right)$ and $\chi_{n+1}\in L^2\left((\Omega,\Fnp);L^2(D)\right)$,  our aim is to find $\vartheta_{n+1}$ in $L^2\left((\Omega,\Fnp);\V\right)$,  such that $P$-a.s in $\Omega$ and for any $v$ in $H^1(D)$
\begin{align}\label{bc1}
\di\int_{D}\Big(\frac{\vartheta_{n+1}-\vartheta_n}{\Delta t}\Big)v dx+\int_{D}\Big(\frac{\chi_{n+1}-\chi_{n}}{\h} - h_{n}\frac{w_{n+1}-w_{n}}{\h}\Big)vdx\nonumber\\+\int_D\nabla \vartheta_{n+1}.\nabla vdx=0.
\end{align}
Similarly the discretization scheme for (\ref{dv2}) is the following one: for a given small positive parameter $\Delta t$, for $\chi_n\in L^2\left((\Omega,\Fn);L^2(D)\right)$ and $\vartheta_{n+1}\in L^2\big((\Omega,\Fnp); L^2(D)\big)$,  our  aim is  to  find $\chi_{n+1}\in L^2\left((\Omega,\Fnp); \V\right)$,  such that $P$-a.s in $\Omega$ and for any $v$ in $H^1(D)$
\begin{align}\label{bc2}
\di\int_{D}\tilde{\ae}\Big(\frac{\chi_{n+1}-\chi_{n}}{\h} - h_{n}\frac{w_{n+1}-w_{n}}{\h}\Big)vdx+\int_D\nabla \chi_{n+1}.\nabla vdx\nonumber\\
=\int_D\vartheta_{n+1}vdx.
\end{align}
With the notations introduced in the previous section, we propose the following discretization of the variational problems (\ref{dv1}) and (\ref{dv2}) : t-almost everywhere in $(0,T)$, $P$-almost surely in $\Omega$ and for any $v$ in $H^1(D)$
\begin{align}
\int_{D}\partial_t(\tilde{\vartheta}^{\h})vdx+
\int_D\partial_t\big(\tilde{\chi}^{\h}-\tilde{B}^{\h}\big)vdx+\int_D\nabla \vartheta^{\h}.\nabla vdx=0\label{dc1}\\
\int_{D}\tilde{\ae}\l(\partial_t\big(\tilde{\chi}^{\h}-\tilde{B}^{\h}\big)\r)vdx+\int_D\nabla \chi^{\h}.\nabla vdx=\int_D\vartheta^{\h}vdx.\label{dc1bis}
\end{align}
Firstly, we show that the discrete system  composed by the approximation schemes (\ref{bc1}) and (\ref{bc2}) is well-defined. Secondly, our aim is to derive boundedness results for the approximate sequences $\vartheta^{\h}$, $\tilde{\vartheta}^{\h}$, $\chi^{\h}$ and $\tilde{\chi}^{\h}-\tilde{B}^{\h}$.

\begin{prop}\label{p1} Set $N\in \N^*, n\in\{0,...,N\}$ and $\vartheta_n, \chi_n\in L^2\left((\Omega,\Fn);L^2(D)\right)$, $\chi_{n+1}\in L^2\left((\Omega,\Fnp);L^2(D)\right)$. If we assume that $\Delta t\leq1$, then
there exists a unique $\vartheta_{n+1}\in L^2\left((\Omega,\Fnp);\V\right)$ satisfying (\ref{bc1}), $P$-a.s in $\Omega$ and for any $v$ in $\V.$
\end{prop}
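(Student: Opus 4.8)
The plan is to read (\ref{bc1}) as a linear elliptic variational problem that can be solved pathwise, i.e.\ for $P$-a.e.\ fixed $\omega\in\O$, by the Lax--Milgram theorem, and then to verify that these pathwise solutions assemble into a genuine element of $L^{2}((\O,\Fnp);\V)$. First I would move every known quantity to the right-hand side and rewrite (\ref{bc1}) as $a(\vartheta_{n+1},v)=\ell(v)$ for all $v\in\V$, where
\[
a(u,v)=\frac{1}{\h}\int_{\D}uv\,dx+\int_{\D}\nabla u\cdot\nabla v\,dx
\]
is a symmetric bilinear form on $\V$, and $\ell(v)=\int_{\D}Fv\,dx$ is the linear functional attached to the source
\[
F:=\frac{1}{\h}\vartheta_n-\frac{\chi_{n+1}-\chi_n}{\h}+h_n\frac{w_{n+1}-w_n}{\h}\in\H,
\]
which, for $P$-a.e.\ $\omega$, is a fixed element of $\H$ built from the given data.

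Second, for fixed $\omega$ I would check the hypotheses of Lax--Milgram on $\V$. Continuity of $a$ and of $\ell$ follows at once from the Cauchy--Schwarz inequality (using $F(\omega)\in\H$). The coercivity of $a$ is the one place where the standing hypothesis $\h\leq 1$ enters: since
\[
a(u,u)=\frac{1}{\h}\|u\|^{2}+\|\nabla u\|^{2}\geq\|u\|^{2}+\|\nabla u\|^{2}=\|u\|_{\V}^{2},
\]
the form is coercive with a constant (here equal to $1$) that is independent of $\h$. Lax--Milgram then provides, for each such $\omega$, a unique $\vartheta_{n+1}(\omega)\in\V$ solving the identity, together with the deterministic bound $\|\vartheta_{n+1}(\omega)\|_{\V}\leq\|F(\omega)\|$.

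Finally I would settle measurability and integrability, which is the only genuinely stochastic point. The Lax--Milgram solution operator $S\colon\H\to\V$, $F\mapsto\vartheta_{n+1}$, is a fixed bounded linear (hence continuous) map that does not depend on $\omega$; since $\vartheta_n,\chi_n,\chi_{n+1},h_n$ and the increment $w_{n+1}-w_n$ are all $\Fnp$-measurable, $F$ is an $\Fnp$-measurable $\H$-valued random variable, and therefore $\vartheta_{n+1}=S(F)$ is $\Fnp$-measurable with values in $\V$. Squaring the pathwise bound and taking expectations gives $\E[\|\vartheta_{n+1}\|_{\V}^{2}]\leq\E[\|F\|^{2}]$, and I would estimate the latter term by term: the contributions of $\vartheta_n,\chi_n,\chi_{n+1}$ are finite by the integrability assumptions on the data, while for the stochastic contribution $h_n(w_{n+1}-w_n)/\h$ I would use the independence of $w_{n+1}-w_n$ from $\Fn$ (hence from $h_n$) together with $\E[(w_{n+1}-w_n)^{2}]=\h$ to get $\E[\|h_n(w_{n+1}-w_n)/\h\|^{2}]=\h^{-1}\E[\|h_n\|^{2}]<\infty$ by Lemma \ref{hlemma1c1}. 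This yields $\vartheta_{n+1}\in L^{2}((\O,\Fnp);\V)$ and closes the argument. The proof is essentially routine; the only subtleties worth flagging are the role of $\h\leq 1$ in securing a coercivity constant independent of $\h$ (which matters for the uniform a priori estimates derived afterwards) and the measurability of the pathwise solution, which is immediate here precisely because $S$ is a single linear operator applied to the random datum $F$.
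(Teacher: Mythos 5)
Your proof is correct and follows exactly the route the paper intends: the paper's own proof of this proposition is the single sentence ``A direct application of Lax--Milgram Theorem gives us the result,'' and your argument is precisely that application, with the pathwise coercivity, measurability of $\vartheta_{n+1}=S(F)$, and the square-integrability of the stochastic term $h_n(w_{n+1}-w_n)/\Delta t$ carefully filled in. No gaps; your write-up simply makes explicit what the paper leaves to the reader.
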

\begin{proof} A direct application of Lax-Milgram Theorem gives us the result.
\end{proof}

\begin{prop}\label{p2} Set $N\in \N^*, n\in\{0,...,N\}$ and $\chi_n\in L^2\left((\Omega,\Fn);L^2(D)\right)$, $\vartheta_{n+1}\in L^2\left((\Omega,\Fnp);L^2(D)\right)$. If we assume that $\Delta t<1$, then
there exists a unique $\unp\in L^2\left((\Omega,\Fnp);\V\right)$ satisfying (\ref{bc2}), $P$-a.s in $\Omega$ and for any $v$ in $H^1(D).$
\end{prop}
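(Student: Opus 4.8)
The plan is to solve (\ref{bc2}) pathwise by monotone operator theory and then to recover measurability and square-integrability in $\omega$. Fix $\omega$ outside a $P$-null set, abbreviate $u=\unp$ for the unknown, and regard $g:=h_n\frac{w_{n+1}-w_n}{\h}\in\H$ together with $\un,\vartheta_{n+1}\in\H$ as given data. Since $\tilde{\ae}=I_d+\ae$, problem (\ref{bc2}) reads: find $u\in\V$ with $\langle Au,v\rangle=\langle F,v\rangle$ for all $v\in\V$, where $A:\V\to\Vprime$ and $F\in\Vprime$ are defined by
\begin{align*}
\langle Au,v\rangle&=\int_D\tilde{\ae}\Big(\tfrac{u-\un}{\h}-g\Big)v\,dx+\int_D\nabla u\cdot\nabla v\,dx,\\
\langle F,v\rangle&=\int_D\vartheta_{n+1}\,v\,dx.
\end{align*}
As $\V$ is a reflexive Hilbert space, I would invoke the Browder--Minty surjectivity theorem: it suffices to check that $A$ is bounded, hemicontinuous, strictly monotone and coercive.

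The monotonicity is the heart of the matter and comes from the coerciveness of $\tilde{\ae}$. Writing $a_i=\frac{u_i-\un}{\h}-g$, so that $u_1-u_2=\h(a_1-a_2)$, one obtains
\begin{align*}
\langle Au_1-Au_2,u_1-u_2\rangle&=\h\int_D\big(\tilde{\ae}(a_1)-\tilde{\ae}(a_2)\big)(a_1-a_2)\,dx+\|\nabla(u_1-u_2)\|^2\\
&\geq\frac{\bar{C}_{\tilde{\ae}}}{\h}\|u_1-u_2\|^2+\|\nabla(u_1-u_2)\|^2.
\end{align*}
This strong monotonicity yields uniqueness at once and, taking $u_2=0$, also the coercivity of $A$ (the remaining term $\langle A0,\cdot\rangle$ lies in $\Vprime$). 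Boundedness and hemicontinuity (indeed strong continuity) follow from the Lipschitz continuity of $\tilde{\ae}$ together with $\tilde{\ae}(0)=0$: the associated Nemytskii operator $u\mapsto\tilde{\ae}\big(\frac{u-\un}{\h}-g\big)$ maps $\H$ continuously and boundedly into $\H$, and the embeddings $\V\hookrightarrow\H\hookrightarrow\Vprime$ then close the argument. Browder--Minty thus provides a unique $u=\unp\in\V$ solving (\ref{bc2}), for $P$-almost every $\omega$.

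It remains to upgrade this pathwise construction to $\unp\in L^2\big((\Omega,\Fnp);\V\big)$. For measurability I would apply the same coerciveness computation to two sets of data $(\un,g,\vartheta_{n+1})$ and $(\hat\un,\hat g,\hat\vartheta_{n+1})$, tested against the difference of the two solutions; after absorbing the cross terms by the Lipschitz continuity of $\tilde{\ae}$ and Young's inequality, this shows that the solution depends Lipschitz-continuously on the data. Since $\un$, $g$ and $\vartheta_{n+1}$ are $\Fnp$-measurable, the composition $\omega\mapsto\unp(\omega)\in\V$ is then $\Fnp$-measurable. For integrability I would test (\ref{bc2}) with $v=\unp$ and use the strong monotonicity against $u_2=0$ to get $\frac{\bar{C}_{\tilde{\ae}}}{\h}\|\unp\|^2+\|\nabla\unp\|^2\leq\big(\|F\|_{\Vprime}+\|A0\|_{\Vprime}\big)\|\unp\|_{\V}$, where $\|A0\|_{\Vprime}\leq(1+C_\alpha)\big(\tfrac{1}{\h}\|\un\|+\|g\|\big)$ by the Lipschitz bound; squaring, taking expectations and using $\un,\vartheta_{n+1}\in L^2(\Omega;\H)$ and $g=h_n\frac{w_{n+1}-w_n}{\h}\in L^2(\Omega;\H)$ yields $\E\big[\|\unp\|_{\V}^2\big]<\infty$.

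Compared with the linear scheme (\ref{bc1}) handled by Lax--Milgram in Proposition \ref{p1}, the main obstacle here is precisely the nonlinearity carried by $\ae$, which rules out a bilinear-form argument and forces the passage to monotone operator theory; the secondary technical point is transferring the pathwise solution into an $\Fnp$-measurable, square-integrable $\V$-valued random variable, which I would settle through the Lipschitz dependence on the data noted above.
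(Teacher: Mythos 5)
Your proof is correct and follows essentially the same route the paper relies on: the paper gives no details here, deferring to Proposition 2.7 of \cite{BLM}, where the discrete problem is likewise solved pathwise by monotone-operator (Minty--Browder) arguments resting on the coercivity and Lipschitz continuity of $\tilde{\ae}$, with $\Fnp$-measurability and square-integrability then recovered from the Lipschitz dependence of the solution on the data. Your write-up supplies exactly the details the paper omits, and it even shows that the hypothesis $\Delta t<1$ is not actually needed for this step, since the strong monotonicity constant $\bar{C}_{\tilde{\ae}}/\Delta t$ is positive for every $\Delta t>0$.
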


\begin{proof} The proof is mostly the same as in \cite{BLM}, Proposition 2.7, so we refer the reader to this paper.
\end{proof}

\begin{prop}Set $N\in \N^*, n\in\{0,...,N\}$ and $\vartheta_n,\chi_n\in L^2\left((\Omega,\Fn);\H\right)$. Assume that $\Delta t <\bar{C}_{\tilde\alpha}$, then
there exists a unique pair $(\vartheta_{n+1},\chi_{n+1})$ belonging to $L^2\big((\Omega,\Fnp);\V\big)\times  L^2\left((\Omega,\Fnp);\V\right)$ and satisfying (\ref{bc1}-\ref{bc2}) $P$-a.s in $\Omega$ and for any $v$ in $H^1(D)$.
\end{prop}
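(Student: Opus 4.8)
The plan is to \emph{decouple} the two discrete equations and solve the coupled system by a Banach fixed-point argument carried out on the unknown $\chi_{n+1}$, using Propositions \ref{p1} and \ref{p2} as the two building blocks. Given $\zeta\in L^2\big((\Omega,\Fnp);\H\big)$, I apply Proposition \ref{p1} with $\zeta$ in place of $\chi_{n+1}$ in \eqref{bc1} to obtain a unique $\vartheta_{n+1}\in L^2\big((\Omega,\Fnp);\V\big)$; then I feed this $\vartheta_{n+1}$ into \eqref{bc2} and apply Proposition \ref{p2} to obtain a unique $\chi_{n+1}\in L^2\big((\Omega,\Fnp);\V\big)$, and I set $S(\zeta):=\chi_{n+1}$. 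Since $\V\hookrightarrow\H$, this defines a map $S:L^2\big((\Omega,\Fnp);\H\big)\to L^2\big((\Omega,\Fnp);\H\big)$, and the two propositions guarantee that $\Fnp$-measurability is preserved. By construction, a fixed point $\chi_{n+1}=S(\chi_{n+1})$ together with the associated $\vartheta_{n+1}$ solves \eqref{bc1}--\eqref{bc2}, and conversely any solution pair yields a fixed point of $S$; it therefore suffices to prove that $S$ admits a unique fixed point.

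The core of the argument is a contraction estimate. Pick $\zeta^1,\zeta^2$ and write $\delta\zeta=\zeta^1-\zeta^2$, $\delta\vartheta=\vartheta^1_{n+1}-\vartheta^2_{n+1}$ and $\delta\chi=S(\zeta^1)-S(\zeta^2)$. Subtracting the two copies of \eqref{bc1}, testing with $v=\delta\vartheta$, multiplying by $\Delta t$ and using Young's inequality, I expect to obtain $P$-a.s.
\begin{equation*}
\|\delta\vartheta\|^2+2\Delta t\,\|\nabla\delta\vartheta\|^2\leq \|\delta\zeta\|^2,
\end{equation*}
so that the heat half-step is non-expansive: $\|\delta\vartheta\|\leq\|\delta\zeta\|$. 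Next, subtracting the two copies of \eqref{bc2} and testing with $v=\delta\chi$, I use the coerciveness of $\tilde{\ae}$: the two arguments of $\tilde{\ae}$ differ exactly by $\delta\chi/\Delta t$, so the nonlinear term is bounded below by $\tfrac{\bar{C}_{\tilde{\alpha}}}{\Delta t}\|\delta\chi\|^2$, giving
\begin{equation*}
\frac{\bar{C}_{\tilde{\alpha}}}{\Delta t}\|\delta\chi\|^2+\|\nabla\delta\chi\|^2\leq (\delta\vartheta,\delta\chi)\leq\|\delta\vartheta\|\,\|\delta\chi\|,
\end{equation*}
whence $\|\delta\chi\|\leq\frac{\Delta t}{\bar{C}_{\tilde{\alpha}}}\|\delta\vartheta\|$. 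Combining the two bounds, squaring and taking expectation yields
\begin{equation*}
\|S(\zeta^1)-S(\zeta^2)\|_{L^2((\Omega,\Fnp);\H)}\leq\frac{\Delta t}{\bar{C}_{\tilde{\alpha}}}\,\|\zeta^1-\zeta^2\|_{L^2((\Omega,\Fnp);\H)}.
\end{equation*}

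Since by assumption $\Delta t<\bar{C}_{\tilde{\alpha}}$, the factor $\Delta t/\bar{C}_{\tilde{\alpha}}$ is strictly less than $1$, so $S$ is a contraction on the Banach space $L^2\big((\Omega,\Fnp);\H\big)$. The Banach fixed-point theorem then furnishes a unique $\chi_{n+1}$, and the uniqueness statement of Proposition \ref{p1} pins down the corresponding $\vartheta_{n+1}$, giving the unique solution pair.

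The step I expect to be the main obstacle is the contraction estimate itself, namely extracting the sharp factor $\Delta t/\bar{C}_{\tilde{\alpha}}$: one must exploit the coerciveness of $\tilde{\ae}$ on the nonlinear term while carefully tracking the powers of $\Delta t$ introduced by the difference quotients, so that it is exactly the stated smallness condition $\Delta t<\bar{C}_{\tilde{\alpha}}$ that makes $S$ a contraction. The remaining verifications (well-definedness of $S$ and preservation of $\Fnp$-measurability) are immediate consequences of Propositions \ref{p1} and \ref{p2}.
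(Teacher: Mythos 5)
Your proposal is correct and takes essentially the same route as the paper: the paper's own proof likewise decouples the scheme into the two half-step maps furnished by Propositions \ref{p1} and \ref{p2} and shows that their composition is a strict contraction on $L^2\left((\Omega,\Fnp);L^2(D)\right)$, concluding by the Banach fixed-point theorem. The only (cosmetic) difference is in the final algebra: you apply Cauchy--Schwarz pointwise and obtain the $P$-a.s.\ factor $\Delta t/\bar{C}_{\tilde{\alpha}}$, whereas the paper uses Young's inequality after taking expectation and obtains the factor $\Delta t/(2\bar{C}_{\tilde{\alpha}}-\Delta t)$; both are strictly less than $1$ precisely under the stated condition $\Delta t<\bar{C}_{\tilde{\alpha}}$.
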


\begin{proof} Set $N\in \N^*, n\in\{0,...,N\}$ and $\vartheta_n,\chi_n\in L^2\left((\Omega,\Fn);\H\right)$. We  introduce the following functionals
$$f:  L^2\left((\Omega,\Fnp);L^2(D)\right)\to  L^2\left((\Omega,\Fnp);\V\right)$$
$$\tilde\chi\mapsto\vartheta^f,$$
where $\vartheta^f$ satisfies, $P$-a.s in $\Omega$ and for any $v$ in $H^1(D)$
\begin{align}\label{comp1}
\di\int_{D}\Big(\frac{\vartheta^f-\vartheta_n}{\Delta t}\Big)v dx+\int_{D}\Big(\frac{\tilde\chi-\chi_{n}}{\h} - h_{n}\frac{w_{n+1}-w_{n}}{\h}\Big)vdx\nonumber\\+\int_D\nabla \vartheta^f.\nabla vdx=0.
\end{align}
Thanks to Proposition  \ref{p1}, $f$ is well defined. Similarly, we introduce
$$g: L^2\left((\Omega,\Fnp);L^2(D)\right)\to  L^2\left((\Omega,\Fnp);\V\right)$$
$$\tilde{\vartheta}\mapsto\chi^g,$$
where $\chi^g$ satisfies, $P$-a.s in $\Omega$ and for any $v$ in $H^1(D)$
\begin{align}\label{comp2}
&\di\int_{D}\tilde{\ae}\Big(\frac{\chi^g-\chi_{n}}{\h} - h_{n}\frac{w_{n+1}-w_{n}}{\h}\Big)vdx+\int_D\nabla\chi^g.\nabla vdx=\int_D\tilde\vartheta vdx.
\end{align}
Thanks to Proposition  \ref{p2}, $g$ is well defined. Let us prove that the  composition $g\circ f$ is a strict contraction in $ L^2\left((\Omega,\Fnp);L^2(D)\right)$.
  %$$g\circ f: L^2\left((\Omega,\Fnp);L^2(D)\right)\to  L^2\left((\Omega,\Fnp);L^2(D)\right)$$ 
On the one hand, note that (\ref{comp1}) can be written $P$-a.s in $\Omega$ and for any  $v$ in $H^1(D)$ as 
\begin{align}
 \label{bc12}
&\di \int_D\vartheta^fvdx+\Delta t\int_D\nabla \vartheta^f.\nabla vdx\nonumber\\
=&\int_D(\vartheta_n-\tilde\chi+\chi_{n}+h_{n}(w_{n+1}-w_{n}))vdx.
\end{align} 
Set $\tilde\chi_{1}, \tilde\chi_{2}$ in $L^2\left((\Omega,\Fn);\H\right)$ and define $\vartheta^f_1=f(\tilde\chi_{1})$, $\vartheta^f_2=f(\tilde\chi_{2})$. Then using (\ref{bc12}), one gets $P$-a.s in $\Omega$ and for any  $v$ in $H^1(D)$
\begin{align}
\label{bc123}
\di\int_D(\vartheta^f_1-\vartheta^f_2)vdx+\Delta t\int_D\nabla (\vartheta^f_1-\vartheta^f_2).\nabla v dx=\int_D(\tilde\chi_{2}-\tilde\chi_{1})v dx.
\end{align}
By choosing $v=(\vartheta^f_1-\vartheta^f_2)$ in (\ref{bc123}) we obtain
\begin{align*}
\di\int_D\big|\vartheta^f_1-\vartheta^f_2\big|^2dx+\Delta t\int_D|\nabla (\vartheta^f_1-\vartheta^f_2)|^2dx
=\int_D(\tilde\chi_{2}-\tilde\chi_{1})(\vartheta^f_1-\vartheta^f_2)dx.
\end{align*}
Then
\begin{align*}
\di\int_D|\vartheta^f_1-\vartheta^f_2|^2dx+2\h\int_D|\nabla (\vartheta^f_1-\vartheta^f_2)|^2dx\leq\int_D|\tilde\chi_{1}-\tilde\chi_{2}|^2dx.
\end{align*}
By taking the expectation, one gets
\begin{align*}
\di E\left[\int_D|\vartheta^f_1-\vartheta^f_2|^2dx\right]+2\h E\left[\int_D|\nabla (\vartheta^f_1-\vartheta^f_2)|^2dx\right]\leq E\left[\int_D|\tilde\chi_{1}-\tilde\chi_{2}|^2dx\right],
\end{align*}
which implies that
\begin{align}
\label{bccc}
\di E\left[||\vartheta^f_1-\vartheta^f_2||^2\right]\leq E\left[||\tilde\chi_{1}-\tilde\chi_{2}||^2\right].
\end{align}
On the other hand, by defining $\tilde\chi^g_1=g(\vartheta^f_1)$ and $\tilde\chi^g_2=g(\vartheta^f_2)$,  (\ref{comp2})  gives $P$-a.s in $\Omega$ and for any $v$ in $H^1(D)$
\begin{align}\label{bc222}
\di&\int_D\left[\tilde{\ae}\Big(\frac{\tilde\chi^g_1-\chi_{n}}{\h} - h_{n}\frac{w_{n+1}-w_{n}}{\h}\Big)-\tilde{\ae}\Big(\frac{\tilde\chi^g_2-\chi_{n}}{\h} - h_{n}\frac{w_{n+1}-w_{n}}{\h}\Big)\right]vdx\nonumber\\
&+\int_D\nabla (\tilde\chi^g_1-\tilde\chi^g_2).\nabla vdx=\int_D(\vartheta^f_1-\vartheta^f_2)vdx.
\end{align}
By choosing $v=\tilde\chi^g_1-\tilde\chi^g_2$ in (\ref{bc222}) and using the coercivity of $\tilde\ae$, we obtain by taking the expectation
\begin{align*}
\di&\Big(\frac{\bar{C}_{\tilde{\ae}}}{\h}-\frac{1}{2}\Big)E\left[\int_D|\tilde\chi^g_1-\tilde\chi^g_2|^2dx\right]\\
+&E\left[\int_D|\nabla (\tilde\chi^g_1-\tilde\chi^g_2)|^2dx\right]\leq \frac{1}{2}E\left[\int_D|\vartheta^f_1-\vartheta^f_2|^2dx\right],
\end{align*} 
and so
\begin{align}
\label{bc222221}
\di2\Big(\frac{\bar{C}_{\tilde{\ae}}}{\Delta t}-\frac{1}{2}\Big)E\left[||\tilde\chi^g_1-\tilde\chi^g_2||^2\right]\leq E\left[||\vartheta^f_1-\vartheta^f_2||^2\right].
\end{align}
By comparing (\ref{bccc}) and (\ref{bc222221}), we get

\begin{align}
\label{bc2222211}
\di E\left[||\tilde\chi^g_1-\tilde\chi^g_2||^2\right]\leq\frac{1}{2(\frac{\bar{C}_{\tilde{\ae}}}{\Delta t}-\frac{1}{2})} E\left[||\tilde\chi_1-\tilde\chi_2||^2\right].
\end{align}
Under the assumption $\Delta t <\bar{C}_{\tilde\alpha}$, the function $g\circ f$ which maps $ L^2\big((\Omega,\Fnp); L^2(D)\big)$ in itself is a strict contraction and admits a unique fixed point in \\$L^2\l((\Omega,\Fnp);L^2(D)\r)$. Using this, there exists a unique pair $(\vartheta_{n+1},\chi_{n+1})$ in $L^2\big((\Omega,\Fnp); \V\big)\times  L^2\left((\Omega,\Fnp);\V\right)$ satisfying (\ref{bc1}) and (\ref{bc2}) $P$-a.s in $\Omega$ and for any $v$ in $H^1(D)$.
\end{proof}

\subsection{First estimates on the approximate sequences}
Our aim is to find boundedness results for the sequences $\tilde{\vartheta}^{\h}$,$\vartheta^{\h}$,$\tilde{\chi}^{\h}$, $\chi^{\h}$ and $\tilde{\chi}^{\h}-\tilde{B}^{\h}$.

\begin{prop}\label{fae}
There exists a constant $C>0$ independent of $\h$ such that
\begin{align}
|| \tilde{\vartheta}^{\h}||_{L^\infty(0,T,L^2(\Omega\times D))},|| \vartheta^{\h}||_{L^\infty(0,T;L^2(\Omega\times D))} &\leq C, \label{2c13}\\ 
%||  \tilde{\vartheta}^{\h}||_{L^2(\Omega\times Q)},|| \vartheta^{\h}||_{L^2(\Omega\times Q)} &\leq C, \label{2c14}\\
||  \tilde{\vartheta}^{\h}-\vartheta^{\h}||_{L^2(\Omega\times Q)} &\leq C\sqrt{\Delta t}, \label{2c15}\\
|| \nabla \tilde{\vartheta}^{\h}||_{L^2(\Omega\times Q)},|| \nabla \vartheta^{\h}||_{L^2(\Omega\times Q)} &\leq C, \label{2c12}\\
|| \nabla \tilde{\chi}^{\h}||_{L^\infty(0,T,L^2(\Omega\times D))},|| \nabla \chi^{\h}||_{L^\infty(0,T;L^2(\Omega\times D))} &\leq C, \label{2c1}\\
%|| \nabla (\tilde{\chi}^{\h}-\chi^{\h}) ||_{L^2(\Omega\times Q)} &\leq C\sqrt{\h}, \label{utmubis1}\\% \text{ thanks to } (\ref{estimation2c1}).\\
|| \partial_t(\tilde{\chi}^{\h}-\tilde{B}^{\h}) ||_{L^2(\Omega\times Q)} &\leq C.\label{1c1}
\end{align}
\end{prop}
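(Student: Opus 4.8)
The plan is to derive a single coupled energy estimate at the discrete level, sum it over the time steps, and only then pass to expectation so as to exploit the adaptedness of the discretized noise $B_n$. Throughout, write $D_{n+1}:=\frac{\chi_{n+1}-\chi_n}{\Delta t}-h_n\frac{w_{n+1}-w_n}{\Delta t}$ for the discrete analogue of $\partial_t(\chi-\int_0^\cdot h\,dw)$, so that $\partial_t(\tilde{\chi}^{\h}-\tilde{B}^{\h})=\sum_n D_{n+1}\mathds{1}_{[\tn,\tnp)}$ and $B_{n+1}-B_n=h_n(w_{n+1}-w_n)$.

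First I would test \eqref{bc1} with $v=\vartheta_{n+1}$ and \eqref{bc2} with $v=D_{n+1}$. The coupling term $(D_{n+1},\vartheta_{n+1})$ then occurs in both identities, and eliminating it gives the clean relation
\begin{equation*}
(\tilde{\ae}(D_{n+1}),D_{n+1})+(\nabla\chi_{n+1},\nabla D_{n+1})+\tfrac{1}{\Delta t}(\vartheta_{n+1}-\vartheta_n,\vartheta_{n+1})+\|\nabla\vartheta_{n+1}\|^2=0.
\end{equation*}
Multiplying by $\Delta t$, invoking $2(a,a-b)=\|a\|^2-\|b\|^2+\|a-b\|^2$ for the $\vartheta$- and $\nabla\chi$-telescoping terms, writing $\Delta t\,\nabla D_{n+1}=\nabla(\chi_{n+1}-\chi_n)-\nabla h_n\,(w_{n+1}-w_n)$, and using the coercivity $(\tilde{\ae}(x)-\tilde{\ae}(0))x\geq\bar{C}_{\tilde{\alpha}}x^2$ together with $\tilde{\ae}(0)=0$, summation over $n=0,\dots,m-1$ yields, for every $m\leq N$,
\begin{align*}
&\bar{C}_{\tilde{\alpha}}\sum_{n=0}^{m-1}\Delta t\|D_{n+1}\|^2+\tfrac12\|\nabla\chi_m\|^2+\tfrac12\|\vartheta_m\|^2\\
&\quad+\tfrac12\sum_{n}\|\nabla(\chi_{n+1}-\chi_n)\|^2+\tfrac12\sum_{n}\|\vartheta_{n+1}-\vartheta_n\|^2+\sum_{n}\Delta t\|\nabla\vartheta_{n+1}\|^2\\
&\leq\tfrac12\|\nabla\chi_0\|^2+\tfrac12\|\vartheta_0\|^2+\sum_{n=0}^{m-1}(\nabla\chi_{n+1},\nabla h_n)(w_{n+1}-w_n).
\end{align*}

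The main obstacle is the last, stochastic, sum, which cannot be dropped directly since $\chi_{n+1}$ is correlated with the increment $w_{n+1}-w_n$. I would split $\chi_{n+1}=\chi_n+(\chi_{n+1}-\chi_n)$: the $\chi_n$-part is a genuine martingale term, for $\chi_n$ and $h_n$ are $\Fn$-measurable while $w_{n+1}-w_n$ is independent of $\Fn$ with zero mean, so it vanishes in expectation. For the increment part, Cauchy--Schwarz and Young bound $|(\nabla(\chi_{n+1}-\chi_n),\nabla h_n)(w_{n+1}-w_n)|$ by $\tfrac{\delta}{2}\|\nabla(\chi_{n+1}-\chi_n)\|^2+\tfrac{1}{2\delta}\|\nabla h_n\|^2(w_{n+1}-w_n)^2$; taking $\delta<1$ lets the first term be absorbed on the left, while conditioning on $\Fn$ gives $\E[\|\nabla h_n\|^2(w_{n+1}-w_n)^2]=\Delta t\,\E[\|\nabla h_n\|^2]$, so that $\sum_n\Delta t\,\E[\|\nabla h_n\|^2]\leq C$ independently of $\Delta t$ by Lemma \ref{hlemma1c1}.

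After taking expectation, the right-hand side is therefore bounded by a constant depending only on $\|\chi_0\|_{\V}$, $\|\vartheta_0\|_{\V}$ and $C$; note that the exact cancellation of the coupling term means no Gronwall argument is required. Taking the maximum over $m$ on the left then delivers all five estimates simultaneously: $\max_m\E[\|\vartheta_m\|^2]\leq C$ and $\max_m\E[\|\nabla\chi_m\|^2]\leq C$ give \eqref{2c13} and \eqref{2c1} (the $\tilde{\vartheta}^{\h}$ and $\tilde{\chi}^{\h}$ versions following since the $L^\infty$-norms of both $x^{\h}$ and $\tilde{x}^{\h}$ equal maxima of $\|x_k\|$); $\sum_n\Delta t\,\E[\|D_{n+1}\|^2]\leq C$ gives \eqref{1c1}; $\sum_n\Delta t\,\E[\|\nabla\vartheta_{n+1}\|^2]\leq C$ gives \eqref{2c12} (using $\|\tilde{x}^{\h}\|^2_{L^2}\leq C\Delta t\sum_k\|x_k\|^2$ for the interpolant); and $\sum_n\E[\|\vartheta_{n+1}-\vartheta_n\|^2]\leq C$ together with $\|\tilde{\vartheta}^{\h}-\vartheta^{\h}\|^2_{L^2(\Omega\times Q)}=\Delta t\sum_n\E[\|\vartheta_{n+1}-\vartheta_n\|^2]$ gives \eqref{2c15}.
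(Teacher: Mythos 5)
Your proposal is correct and follows essentially the same route as the paper's proof: the same test functions $v=\vartheta_{k+1}$ in \eqref{bc1} and $v=D_{k+1}$ in \eqref{bc2}, the same cancellation of the coupling term, the same telescoping identities, and the same treatment of the stochastic term by splitting $\nabla\chi_{k+1}=\nabla\chi_k+\nabla(\chi_{k+1}-\chi_k)$, killing the $\F_{t_k}$-measurable part in expectation and absorbing the increment part via Young's inequality and Lemma \ref{hlemma1c1}. The only (immaterial) differences are that you invoke the coercivity of $\tilde{\ae}$ directly rather than that of $\ae$ plus the identity part, and that you sum pathwise before taking expectation rather than the reverse.
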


\begin{proof} Set $N\in \N^*$, $n\in \{0,..,N-1\}$ and $k\in \{0,...,n\}$. \smallskip Consider the variational formulations $(\ref{bc1})$ and $(\ref{bc2})$ with  the couple of indexes $(k+1,k)$. By adding $(\ref{bc1})$ with  the test function $v=\di \vartheta_{k+1}$ and (\ref{bc2}) with the test function $v=\di\frac{\chi_{k+1}-\chi_{k}}{\h}  -h_{k} \frac{w_{k+1}-w_{k}}{\h} $, one gets 
\begin{align*}
&\int_D(\frac{\vartheta_{k+1}-\vartheta_{k}}{\h})\vartheta^{k+1}dx+\l\|\nabla\vartheta_{k+1}\r\|^2+
\l\|\frac{\chi_{k+1}-\chi_{k}}{\h}  - h_{k}\frac{w_{k+1}-w_{k}}{\h}\r\|^2\\
&+\int_D \ae\l(\di\frac{\chi_{k+1}-\chi_{k}}{\h}  - h_{k}\frac{w_{k+1}-w_{k}}{\h} \r) \times \l(\frac{\chi_{k+1}-\chi_{k}}{\h}  - h_{k}\frac{w_{k+1}-w_{k}}{\h} \r)  \,dx\\
&+\int_D\nabla \chi_{k+1}.\nabla\l(\frac{\chi_{k+1}-\chi_{k}}{\h}  - h_{k}\frac{w_{k+1}-w_{k}}{\h}\r)dx= 0.
\end{align*}
Using the coerciveness property of $\ae$, one gets
\begin{align*}
&\int_D\vartheta^{k+1}(\frac{\vartheta_{k+1}-\vartheta_{k}}{\h})dx+\l\|\nabla\vartheta_{k+1}\r\|^2+
\di (\bar{C}_{\alpha}+1)\l \|\frac{\chi_{k+1}-\chi_{k}}{\h}  -h_{k} \frac{w_{k+1}-w_{k}}{\h} \r \|^2 \\
&+  \int_D\nabla \chi_{k+1}.\nabla\l(\frac{\chi_{k+1}-\chi_{k}}{\h}  - h_{k}\frac{w_{k+1}-w_{k}}{\h} \r)dx\leq 0.
\end{align*}
Then
\begin{align*}
&\int_D\vartheta^{k+1}(\frac{\vartheta_{k+1}-\vartheta_{k}}{\h})dx+\l\|\nabla\vartheta_{k+1}\r\|^2+
 \di (\bar{C}_{\alpha}+1)\l \|\frac{\chi_{k+1}-\chi_{k}}{\h}  -h_{k} \frac{w_{k+1}-w_{k}}{\h} \r \|^2 \\
 &+ \int_D\nabla \chi_{k+1}.\nabla\frac{\chi_{k+1}-\chi_{k}}{\h}dx \\
\leq&\di \int_D\nabla(\chi_{k+1}-\chi_{k}).\nabla h_{k}\frac{w_{k+1}-w_{k}}{\h} dx +\int_D\nabla \chi_{k}.\nabla h_{k}\frac{w_{k+1}-w_{k}}{\h} dx.
\end{align*}
Using the formula $$a(a-b)=\frac{1}{2}\{a^2-b^2+(a-b)^2\}$$ with $a=\vartheta_{k+1}$ (respectively $a=\nabla\chi_{k+1}$) and $b=\vartheta_{k}$ (respectively $b=\nabla\chi_{k}$), one gets for any $\epsilon>0$
\begin{align*}
&\frac{1}{2\h}\l[||\vartheta_{k+1}||^2-||\vartheta_{k}||^2 + ||\vartheta_{k+1}-\vartheta_{k}||^2\r]+||\nabla \vartheta_{k+1}||^2\\
&+(\bar{C}_{\alpha}+1)\l\|\frac{\chi_{k+1}-\chi_{k}}{\h}  - h_{k}\frac{w_{k+1}-w_{k}}{\h} \r\|^2   \\
&+\frac{1}{2\h}\l[||\nabla \chi_{k+1}||^2-||\nabla \chi_{k}||^2 + ||\nabla (\chi_{k+1}-\chi_{k})||^2\r] \\
\leq&\ \frac{\epsilon}{2\h}||\nabla (\chi_{k+1}-\chi_{k})||^2 + \frac{|w_{k+1}-w_{k}|^2}{2\epsilon \h} ||\nabla h_{k}||^2\\
&- \int_D\nabla \chi_{k}.\nabla h_{k}\frac{w_{k+1}-w_{k}}{\h} dx.
\end{align*}
Then, since $\nabla\chi_{k}$ and $\nabla h_{k}$ are $\mathcal{F}_{t_{k}}$-measurable, by taking the expectation one gets
\begin{align*}
&\frac{1}{2\h}E\l[\l\|\vartheta_{k+1}\r\|^2-\l\|\vartheta_{k}\r\|^2 + \l\|\vartheta_{k+1}-\vartheta_{k}\r\|^2\r]+E\l[\l\|\nabla \vartheta_{k+1}\r\|^2\r]\\
&+(\bar{C}_{\alpha}+1)E\l[\l\|\frac{\chi_{k+1}-\chi_{k}}{\h}  -h_{k} \frac{w_{k+1}-w_{k}}{2\h} \r\|^2\r]  \\
&+ \frac{1}{2\h}E\left[\l\| \nabla \chi_{k+1}\r\|^2-\l\|\nabla \chi_{k}\r\|^2 + \l\|\nabla (\chi_{k+1}-\chi_{k})\r\|^2\right]\\
\leq&\  \frac{\epsilon}{2\h}E\l[||\nabla (\chi_{k+1}-\chi_{k})||^2\r] +\frac{1}{2\epsilon}E\l[ ||\nabla h_{k}||^2\r].
%&- \int_D\nabla \chi_{k}.\nabla h_{k}\frac{w_{k+1}-w_{k}}{\h} dx.
\end{align*}
In this way
\begin{align*}
&E\l[\l\|\vartheta_{k+1}\r\|^2\r]-E\l[\l\|\vartheta_{k}\r\|^2\r] + E\l[\l\|\vartheta_{k+1}-\vartheta_{k}\r\|^2\r]+2\h E\l[\l\|\nabla \vartheta_{k+1}\r\|^2\r]\\
&+2(\bar{C}_{\alpha}+1)\h E\l[\l\|\frac{\chi_{k+1}-\chi_{k}}{\h}  - h_{k}\frac{w_{k+1}-w_{k}}{\h} \r\|^2\r]  \\
&+ E\l[\l\|\nabla \chi_{k+1}\r\|^2\r]-E\l[\l\|\nabla \chi_{k}\r\|^2\r]
+(1-\epsilon)E\l[\l\|\nabla (\chi_{k+1}-\chi_{k})\r\|^2\r] \\
\leq&\ \frac{\h}{\epsilon} E\l[\l\|\nabla h_{k}\r\|^2\r].
\end{align*}
By summing from $k=0$ to $n$, one gets
\begin{align*}
&\sum_{k=0}^{n}E\l[\l\|\vartheta_{k+1}\r\|^2\r]-\sum_{k=0}^{n}E\l[\l\|\vartheta_{k}\r\|^2\r] + \sum_{k=0}^{n}E\l[\l\|\vartheta_{k+1}-\vartheta_{k}\r\|^2\r]+2\sum_{k=0}^{n}\h E\l[\l\|\nabla \vartheta_{k+1}\r\|^2\r]\\
&+2(\bar{C}_{\alpha}+1)\sum_{k=0}^{n}\h E\l[\l\|\frac{\chi_{k+1}-\chi_{k}}{\h}  - \frac{w_{k+1}-w_{k}}{\h} h_{k}\r\|^2\r]  \\
&+ \sum_{k=0}^{n}E\l[\l\|\nabla \chi_{k+1}\r\|^2\r]- \sum_{k=0}^{n}E\l[\l\|\nabla \chi_{k}\r\|^2\r]+(1-\epsilon)\sum_{k=0}^{n}E\l[\l\|\nabla (\chi_{k+1}-\chi_{k})\r\|^2\r] \\
\leq&\ \frac{1}{\epsilon} \sum_{k=0}^{n}\h E\l[\l\|\nabla h_{k}\r\|^2\r].
\end{align*}
By taking $\di\epsilon=\frac{1}{2}$ and using Lemma \ref{hlemma1c1}, there exists a constant $C>0$ independent of $\h$ such that

\begin{align}
&E\l[\l\|\vartheta_{n+1}\r\|^2\r] + \sum_{k=0}^{n}E\l[\l\|\vartheta_{k+1}-\vartheta_{k}\r\|^2\r]+\sum_{k=0}^{n}\h E\l[\l\|\nabla \vartheta_{k+1}\r\|^2\r]\nonumber\\
&+\sum_{k=0}^{n}\h E\l[\l\|\frac{\chi_{k+1}-\chi_{k}}{\h}  - h_k \frac{w_{k+1}-w_{k}}{\h}\r\|^2\r]  +E\l[\l\|\nabla \chi_{n+1}\r\|^2\r]\nonumber\\
&+\frac{1}{2} \sum_{k=0}^{n}E\l[\l\|\nabla (\chi_{k+1}-\chi_{k})\r\|^2\r] \leq C, \label{estimation2c1}
\end{align}
and we get directly the announced estimates:
\begin{eqnarray*}
||  \tilde{\vartheta}^{\h}||_{L^\infty(0,T,L^2(\Omega\times D))},||  \vartheta^{\h}||_{L^\infty(0,T,L^2(\Omega\times D))} &\leq& C,\\
||  \tilde{\vartheta}^{\h}-\vartheta^{\h} ||_{L^2(\Omega\times Q)} &\leq& C\sqrt{\h},\\% \text{ thanks to } (\ref{estimation2c1}).\\
|| \nabla\tilde{\vartheta}^{\h} ||_{L^2(\Omega\times Q)},|| \nabla{\vartheta}^{\h} ||_{L^2(\Omega\times Q)} &\leq& C.\\% \text{ thanks to } (\ref{estimation2c1}),\\
|| \partial_t(\tilde{\chi}^{\h}-\tilde{B}^{\h}) ||_{L^2(\Omega\times Q)} &\leq& C,\\% \text{ thanks to } (\ref{estimation2c1}),\\
|| \nabla \tilde{\chi}^{\h}||_{L^\infty(0,T,L^2(\Omega\times D))},|| \nabla \chi^{\h}||_{L^\infty(0,T,L^2(\Omega\times D))} &\leq& C.% \text{ thanks to } (\ref{estimation1c1}),\\
\end{eqnarray*}
Additionally, let us note that one can also deduce from (\ref{estimation2c1}) the following bound:
\begin{align}
|| \nabla (\tilde{\chi}^{\h}-\chi^{\h}) ||_{L^2(\Omega\times Q)} &\leq C\sqrt{\h}.\label{ng}% \text{ thanks to } (\ref{estimation2c1}).\\
\end{align}
\end{proof}
\noindent From these first estimates, one can deduce directly the following ones. 
\begin{prop}\label{fbae}
There exists a constant $C>0$ independent of $\h$ such that
\begin{align}
|| \tilde{\chi}^{\h}-\chi^{\h} ||_{L^2((0,T)\times\Omega, H^1(D))}&\leq C\sqrt{\h},\label{utmu}\\
|| \tilde{\vartheta}^{\h}-\tilde{\vartheta}^{\h}(.-\Delta t) ||_{L^2(\Omega \times Q)} &\leq C\sqrt{\h},\label{utmubis1}\\
|| \tilde{\chi}^{\h}-\tilde{\chi}^{\h}(.-\Delta t) ||_{L^2((0,T)\times\Omega, H^1(D))} &\leq C\sqrt{\h},\label{utmubis}\\
|| \tilde{\chi}^{\h}-\tilde{B}^{\h} ||_{L^{\infty}(0,T;L^2(\Omega\times D))} &\leq C,\label{6c1}\\
|| \nabla (\tilde{\chi}^{\h}-\tilde{B}^{\h}) ||_{L^2(\Omega\times Q)}&\leq C, \label{5c1}\\
||\tilde{\chi}^{\h}||_{L^2((0,T)\times \Omega,H^1(D))}, ||\chi^{\h}||_{L^2((0,T)\times \Omega,H^1(D))}&\leq C,\label{butuut}\\
||\tilde{\chi}^{\h}(.-\h)||_{\mathcal{N}^2_w(0,T,H^1(D))}, \ ||\tilde{\vartheta}^{\h}(.-\h)||_{\mathcal{N}^2_w(0,T,H^1(D))}&\leq C. \label{butuutbis}
\end{align}
\end{prop}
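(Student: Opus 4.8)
The plan is to read off every bound from the single discrete estimate \eqref{estimation2c1} of Proposition \ref{fae}, combined with the algebraic identities linking the piecewise-constant interpolants $\chi^{\h},\vartheta^{\h}$ and the piecewise-linear ones $\tilde{\chi}^{\h},\tilde{\vartheta}^{\h}$ to the increments $\chi_{k+1}-\chi_k$, $\vartheta_{k+1}-\vartheta_k$; no new differential inequality should be required. The one preliminary fact I would isolate is that the \emph{pure} increments are summably bounded, uniformly in $\h$:
\[
\sum_{k=0}^{N-1}E\bigl[\|\chi_{k+1}-\chi_k\|^2\bigr]\le C,\qquad \sum_{k=0}^{N-1}E\bigl[\|\nabla(\chi_{k+1}-\chi_k)\|^2\bigr]\le C.
\]
The gradient sum is exactly the last term of \eqref{estimation2c1}. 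The first does not follow literally, since \eqref{estimation2c1} only controls the noise-corrected increment $\frac{\chi_{k+1}-\chi_k}{\h}-h_k\frac{w_{k+1}-w_k}{\h}$; I would split $\chi_{k+1}-\chi_k$ into this corrected increment plus the martingale term $h_k(w_{k+1}-w_k)$, and control $\sum_k E[\|h_k(w_{k+1}-w_k)\|^2]=\h\sum_k E[\|h_k\|^2]$ using the independence of $w_{k+1}-w_k$ from $\mathcal{F}_{t_k}$ together with Lemma \ref{hlemma1c1}.

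With these sums at hand, the three $\sqrt{\h}$-estimates are immediate. For \eqref{utmu} the $L^2$-component equals, up to a universal constant, $\h\sum_k E[\|\chi_{k+1}-\chi_k\|^2]\le C\h$ by the interpolation identities, while the gradient component is precisely the bound \eqref{ng} already recorded in Proposition \ref{fae}. For the translation estimates \eqref{utmubis1} and \eqref{utmubis} I would write the shifted difference as the integral of the piecewise-constant derivative, $\tilde{\vartheta}^{\h}(t)-\tilde{\vartheta}^{\h}(t-\h)=\int_{t-\h}^{t}\partial_t\tilde{\vartheta}^{\h}$, and apply Cauchy--Schwarz and Fubini to obtain $\|\tilde{\vartheta}^{\h}-\tilde{\vartheta}^{\h}(\cdot-\h)\|_{L^2(\Omega\times Q)}^2\le\h^2\|\partial_t\tilde{\vartheta}^{\h}\|_{L^2(\Omega\times Q)}^2$; since $\|\partial_t\tilde{\vartheta}^{\h}\|^2=\frac{1}{\h}\sum_k E[\|\vartheta_{k+1}-\vartheta_k\|^2]\le C/\h$ by \eqref{estimation2c1}, the right-hand side is $O(\h)$. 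Estimate \eqref{utmubis} is obtained identically in both components of the $H^1(D)$-norm, using the two summed bounds above.

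The four $O(1)$-bounds then follow by triangle inequalities and one energy identity. For \eqref{6c1} I would use that $\tilde{\chi}^{\h}-\tilde{B}^{\h}$ is the piecewise-linear interpolant of $(\chi_n-B_n)_n$, so that $(\tilde{\chi}^{\h}-\tilde{B}^{\h})(t)=\chi_0+\int_0^t\partial_t(\tilde{\chi}^{\h}-\tilde{B}^{\h})$; Cauchy--Schwarz in time, the bound \eqref{1c1} on $\partial_t(\tilde{\chi}^{\h}-\tilde{B}^{\h})$ and $H_3$ give the $L^\infty(0,T;L^2(\Omega\times D))$-bound. Estimate \eqref{5c1} combines $\|\nabla\tilde{\chi}^{\h}\|\le C$ from \eqref{2c1} with the boundedness of $\nabla\tilde{B}^{\h}$ in $L^2(\Omega\times Q)$ furnished by the convergence in Proposition \ref{CVBc1}. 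For \eqref{butuut} I would split $\tilde{\chi}^{\h}=(\tilde{\chi}^{\h}-\tilde{B}^{\h})+\tilde{B}^{\h}$, bound the $L^2$-part through \eqref{6c1} and the gradient through \eqref{2c1}, and pass to $\chi^{\h}$ via \eqref{utmu}. Finally, for \eqref{butuutbis} a time shift by $\h$ alters the $\mathcal{N}^2_w(0,T,H^1(D))$-norm only by the boundary term $\h\,E[\|\chi_0\|_{H^1(D)}^2]$ (respectively $\h\,E[\|\vartheta_0\|_{H^1(D)}^2]$), finite by $H_3$, while making the lagged interpolant predictable.

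I expect the only genuinely non-cosmetic step to be the first one: recovering $\sum_k E[\|\chi_{k+1}-\chi_k\|^2]\le C$ from the noise-corrected quantity in \eqref{estimation2c1}, since this is the point where the stochastic structure enters and where the independence $w_{k+1}-w_k\perp\mathcal{F}_{t_k}$ and Lemma \ref{hlemma1c1} are used. The remaining two minor points---the boundary conventions for the shifted arguments near $t=0$ and the verification that the lagged interpolants are predictable---are routine.
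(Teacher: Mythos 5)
Your proposal is correct and follows essentially the same route as the paper: every estimate is read off from \eqref{estimation2c1} and the interpolation identities, and your one ``non-cosmetic'' step --- the bound $\sum_k E\big[\|\chi_{k+1}-\chi_k\|^2\big]\leq C$ obtained by splitting off the martingale term $h_k(w_{k+1}-w_k)$ and using the independence of $w_{k+1}-w_k$ from $\mathcal{F}_{t_k}$ together with Lemma \ref{hlemma1c1} --- is exactly the computation the paper performs inline when proving \eqref{utmu}. The remaining deviations are minor and valid: you replace the paper's three-term discrete decompositions for \eqref{utmubis1}, \eqref{utmubis} and \eqref{6c1} by the equivalent integral-of-the-derivative representations, and for \eqref{5c1} you obtain the boundedness of $\nabla\tilde{B}^{\h}$ in $L^2(\Omega\times Q)$ by citing the convergence in Proposition \ref{CVBc1} (convergent sequences are norm-bounded) rather than redoing the paper's discrete It\^o-isometry computation, which is a legitimate, non-circular shortcut.
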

\begin{proof}
Using (\ref{estimation2c1}), we have
\begin{align*}
&\l\| \tilde{\chi}^{\h}-\chi^{\h} \r\|^2_{L^2(\Omega\times Q)}= \Delta t\sum_{k=0}^{N-1}E\l[\l\|\chi_{k+1}-\chi_k\r\|^2\r]\\
\leq&\ \Delta t\sum_{k=0}^{N-1}E\l[2\Delta t^2\l\|\frac{\chi_{k+1}-\chi_{k}}{\h}  - h_k \frac{w_{k+1}-w_{k}}{\h}\r\|^2\hspace*{-0.1cm}+\hspace*{-0.05cm}2\l\|h_k(w_{k+1}-w_{k})\r\|^2    \r]\\
=&\ 2\Delta t^2\sum_{k=0}^{N-1}\Delta t E\l[\l\|\frac{\chi_{k+1}-\chi_{k}}{\h}  - h_k \frac{w_{k+1}-w_{k}}{\h}\r\|^2\r]\hspace*{-0.05cm}+\hspace*{-0.05cm}2\sum_{k=0}^{N-1}\Delta t^2 E\l[ \l\|h_k\r\|^2\r]\\
\leq&\ C \Delta t, 
\end{align*}
combining it  with the previous estimate  (\ref{ng}), one deduces that  (\ref{utmu}) holds. 
Note that 
$$ \tilde{\vartheta}^{\h}-\tilde{\vartheta}^{\h}(.-\Delta t) =\big(\tilde{\vartheta}^{\h}-{\vartheta}^{\h}\big)+\big(\vartheta^{\h}-{\vartheta}^{\h}(.-\Delta t)\big)+\big({\vartheta}^{\h}(.-\Delta t)-\tilde{\vartheta}^{\h}(.-\Delta t)\big)
$$
and that there exists a constant $C>0$ independent of $\h$ such that 
$$||\tilde{\vartheta}^{\h}-{\vartheta}^{\h}||_{L^2(\Omega \times Q)}^2, ||\vartheta^{\h}-{\vartheta}^{\h}(.-\Delta t)||_{L^2(\Omega \times Q)}^2\text{ and }||{\vartheta}^{\h}(.-\Delta t)-\tilde{\vartheta}^{\h}(.-\Delta t)||_{L^2(\Omega \times Q)}^2$$
are controlled by
$$ C\h\sum_{k=0}^{N-1}E\left[||\vartheta_{k+1}-\vartheta_k||^2\right].$$
Then, owing to (\ref{estimation2c1}), one gets directly (\ref{utmubis1}).
Now, using the same kind of decomposition for $\tilde{\chi}^{\h}-\tilde{\chi}^{\h}(.-\h)$, one shows that (\ref{utmubis}) holds.
Additionally for any $n$ in $\{0,...,N-1\}$ since $B_0=0$ one has
\begin{align*}
&E\l[\l\|\chi_{k+1}-\bnp\r\|^2\r]\\
%=&\ E\l[\l|\l|\chi_0+\sum_{k=0}^n \chi_{k+1}-\chi_k-(\bkp-\bk)\r|\r|^2\r]\\
%\leq&\ 2||\chi_0||^2+2\Delta t^2 E\l[\l|\l|\sum_{k=0}^n\frac{\chi_{k+1}-\chi_{k}}{\h}  - h_k \frac{w_{k+1}-w_{k}}{\h}\r|\r|^2\r]\\
\leq&\ 2||\chi_0||^2+2T\sum_{k=0}^n\Delta tE\l[\l|\l|\frac{\chi_{k+1}-\chi_{k}}{\h}  - h_k \frac{w_{k+1}-w_{k}}{\h}\r|\r|^2\r]
%\leq&\ C,
\end{align*}
combining this with (\ref{estimation2c1}), we show that $|| \tilde{\chi}^{\h}-\tilde{B}^{\h} ||_{L^{\infty}(0,T;L^2(\Omega\times D))} \leq C$.
Let us now prove that $\di || \nabla (\tilde{\chi}^{\h}-\tilde{B}^{\h}) ||_{L^2(\Omega\times Q)}$ is bounded independently of $\Delta t$. Using (\ref{2c1}),  it remains to show that $\nabla \tilde{B}^{\h}$ is bounded in $L^2(\Omega\times Q)$. Due to  Lemma \ref{hlemma1c1} and the fact that  $E\left[(w_{j+1}-w_{j})^{2}\right]=\h$ for any $j\in\{0,...,N-1\}$, one has
\begin{equation*}
\begin{array}{lll}
|| \nabla \tilde{B}^{\h}||^{2}_{L^2(\Omega\times Q)}
%&\leq&\di\h\sum_{k=0}^{N}E\|\nabla B_{k}\|^{2}_{L^{2}( D) } \\
&\leq&\h\di \sum_{k=0}^{N}E\l[\int_{D}\left(\sum_{j=0}^{k}(w_{j+1}-w_{j})\nabla h_{j}\right)^{2}dx\r] \\
%&=&\h\di \sum_{k=0}^{N}\sum_{j=0}^{k}\int_{D}E\l[\big((w_{j+1}-w_{j})\nabla h_{j}\big)^{2}\r]dx\\
&=&\h\di \sum_{k=0}^{N}\sum_{j=0}^{k}\int_{D}E\left[(w_{j+1}-w_{j})^{2}\right]E\l[(\nabla h_{j})^{2}\r]dx \\
&=&\h\di \sum_{k=0}^{N} \h E\sum_{j=0}^{k}\|h_{j}\|_{H^{1}(D)}^{2}\\
&\leq&C,
\end{array}
\end{equation*}
and the result holds. Using the fact that $\tilde{\chi}^{\h}-\tilde{B}^{\h}$ and $\tilde{B}^{\h}$ are bounded in $L^2(\Omega\times Q)$, one gets that $\tilde{\chi}^{\h}$ is also bounded in  $L^2(\Omega\times Q)$. Finally, combining this with (\ref{2c1}), one obtains the boundedness of $\tilde{\chi}^{\h}$ in $L^2((0,T)\times \Omega,H^1(D))$. Thanks to (\ref{utmu})-(\ref{utmubis}), one gets the same result for $\tilde{\chi}^{\h}(.-\h)$ and $\chi^{\h}$ which gives  (\ref{butuut}).\\

Note that $\tilde{\chi}^{\h}(.-\h)$ and $\tilde\vartheta^{\h}(.-\h)$ are bounded in $L^2((0,T)\times \Omega, H^1(D))$ respectively  due  to (\ref{utmubis})-(\ref{butuut}) and (\ref{2c13})-(\ref{2c12})-(\ref{utmubis1}). Thus, they belong to \\$\mathcal{N}^2_w(0,T, H^1(D))$  as  continuous and adapted processes. Finally, (\ref{butuutbis}) holds.
\end{proof}
\pagebreak
\subsection{Second estimates on the approximate sequences}
\begin{prop}\label{sae}
There exists a constant $C>0$ independent of $\h$ such that
\begin{align}
\l\|\nabla \tilde{\vartheta}^{\h}\r\|_{L^\infty(0,T,L^2(\Omega\times D))},\l\|\nabla \vartheta^{\h}\r\|_{L^\infty(0,T;L^2(\Omega\times D))} &\leq C, \label{sp1}\\
\l\| \nabla(\tilde{\vartheta}^{\h}-\vartheta^{\h})\r\|_{L^2(\Omega\times Q)} &\leq C\sqrt{\h}, \label{sp2}\\
|| \nabla\big(\tilde{\vartheta}^{\h}-\tilde{\vartheta}^{\h}(.-\Delta t)\big) ||_{L^2(\Omega \times Q)} &\leq C\sqrt{\h}, \label{sp2bis}\\
\l\|\partial_t\tilde{\vartheta}^{\h}\r\|_{L^2(\Omega \times Q)}&\leq C.\label{sp3}
\end{align}
\end{prop}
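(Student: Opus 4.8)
The plan is to revisit the scheme (\ref{bc1}) and to test it, this time, against the \emph{discrete time derivative} $v=\frac{\vartheta_{k+1}-\vartheta_k}{\h}$ rather than against $\vartheta_{k+1}$ as in Proposition \ref{fae}. This is the natural choice to produce simultaneously a control of $\partial_t\tilde\vartheta^{\h}$ and a telescoping estimate for $\|\nabla\vartheta_{k+1}\|^2$. Fixing $N\in\N^*$, $n\in\{0,\dots,N-1\}$ and $k\in\{0,\dots,n\}$, writing (\ref{bc1}) with the couple of indexes $(k+1,k)$ and choosing $v=\frac{\vartheta_{k+1}-\vartheta_k}{\h}$ produces three terms: the square $\l\|\frac{\vartheta_{k+1}-\vartheta_k}{\h}\r\|^2$; the coupling term $\int_D\l(\frac{\chi_{k+1}-\chi_k}{\h}-h_k\frac{w_{k+1}-w_k}{\h}\r)\frac{\vartheta_{k+1}-\vartheta_k}{\h}\,dx$; and the gradient term $\int_D\nabla\vartheta_{k+1}\cdot\nabla\frac{\vartheta_{k+1}-\vartheta_k}{\h}\,dx$.

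For the gradient term I would reuse the algebraic identity $a(a-b)=\frac12\{a^2-b^2+(a-b)^2\}$ (with $a=\nabla\vartheta_{k+1}$, $b=\nabla\vartheta_k$) to rewrite it as $\frac{1}{2\h}\l[\|\nabla\vartheta_{k+1}\|^2-\|\nabla\vartheta_k\|^2+\|\nabla(\vartheta_{k+1}-\vartheta_k)\|^2\r]$, which telescopes after summation. The coupling term is handled by Young's inequality, splitting it into $\frac12\l\|\frac{\chi_{k+1}-\chi_k}{\h}-h_k\frac{w_{k+1}-w_k}{\h}\r\|^2+\frac12\l\|\frac{\vartheta_{k+1}-\vartheta_k}{\h}\r\|^2$; the \textbf{key point} is that the second half is then \emph{absorbed} by the leading square, leaving $\frac12\l\|\frac{\vartheta_{k+1}-\vartheta_k}{\h}\r\|^2$ on the left-hand side. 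After multiplying by $2\h$, taking the expectation and summing over $k$ from $0$ to $n$, the telescoping leaves $E[\|\nabla\vartheta_{n+1}\|^2]-E[\|\nabla\vartheta_0\|^2]$, and the only surviving right-hand side is $\sum_{k=0}^n\h\,E\l[\l\|\frac{\chi_{k+1}-\chi_k}{\h}-h_k\frac{w_{k+1}-w_k}{\h}\r\|^2\r]$.

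At this stage the proof closes using the results already at hand: the initial term $E[\|\nabla\vartheta_0\|^2]=\|\nabla\vartheta_0\|^2$ is finite by $H_3$, and the surviving right-hand side is exactly the quantity bounded independently of $\h$ in the first energy estimate (\ref{estimation2c1}). Consequently one obtains, uniformly in $n$, a single bound of the form
\begin{align*}
\sum_{k=0}^n\h\,E\l[\l\|\tfrac{\vartheta_{k+1}-\vartheta_k}{\h}\r\|^2\r]+E\l[\|\nabla\vartheta_{n+1}\|^2\r]+\sum_{k=0}^n E\l[\|\nabla(\vartheta_{k+1}-\vartheta_k)\|^2\r]\leq C.
\end{align*}
From the second summand I read off (\ref{sp1}) for $\vartheta^{\h}$ (since $\|\nabla\vartheta^{\h}\|_{L^\infty(0,T;L^2(\Omega\times D))}=\max_{k}E[\|\nabla\vartheta_k\|^2]^{1/2}$), and the same bound for $\tilde\vartheta^{\h}$ follows because the piecewise-linear interpolant does not exceed the nodal maximum. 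The first summand is precisely $\|\partial_t\tilde\vartheta^{\h}\|^2_{L^2(\Omega\times Q)}$, which gives (\ref{sp3}). The third summand, together with the elementary interpolation identities recalled among the notations — namely $\|\nabla(\tilde\vartheta^{\h}-\vartheta^{\h})\|^2_{L^2(\Omega\times Q)}\leq C\h\sum_k E[\|\nabla(\vartheta_{k+1}-\vartheta_k)\|^2]$ — yields (\ref{sp2}); and (\ref{sp2bis}) follows exactly as (\ref{utmubis1}) was obtained, by decomposing $\tilde\vartheta^{\h}-\tilde\vartheta^{\h}(\cdot-\h)$ into the three pieces $(\tilde\vartheta^{\h}-\vartheta^{\h})$, $(\vartheta^{\h}-\vartheta^{\h}(\cdot-\h))$ and $(\vartheta^{\h}(\cdot-\h)-\tilde\vartheta^{\h}(\cdot-\h))$ and controlling each gradient by the same sum. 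The only genuinely delicate point of the whole argument is the Young absorption step; no martingale or measurability manipulation of the stochastic increment is needed here, since the combined quantity $\frac{\chi_{k+1}-\chi_k}{\h}-h_k\frac{w_{k+1}-w_k}{\h}$ was already estimated in the weighted $L^2(\Omega\times Q)$ norm in Proposition \ref{fae}.
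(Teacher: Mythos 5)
Your proposal is correct and follows essentially the same route as the paper: testing (\ref{bc1}) with $v=\frac{\vartheta_{k+1}-\vartheta_k}{\h}$, absorbing half of the leading square via Young's inequality (the paper's choice $\delta=1$), telescoping the gradient term, bounding the surviving right-hand side by the first energy estimate (\ref{estimation2c1}), and obtaining (\ref{sp2bis}) by the same three-piece decomposition used for (\ref{utmubis1}). No discrepancies worth noting.
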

\begin{proof} Set $N\in \N^*$, $n\in \{0,..,N-1\}$ and $k\in \{0,...,n\}$. \smallskip We consider the variational formulation $(\ref{bc1})$ with  the couple of indexes $(k+1,k)$ and choose the particular test function  $\di v=\frac{\vartheta_{k+1}-\vartheta_k}{\Delta t}$ to get $P$-almost surely in $\Omega$
\begin{align*}
&\l\|\frac{\vartheta_{k+1}-\vartheta_k}{\Delta t}\r\|^2+\int_{D}\Big(\frac{\chi_{k+1}-\chi_{k}}{\h} - h_{k}\frac{w_{k+1}-w_{k}}{\h}\Big)\times\big(\frac{\vartheta_{k+1}-\vartheta_k}{\Delta t}\big)dx\\
&+\int_D\nabla \vartheta_{k+1}.\nabla \big(\frac{\vartheta_{k+1}-\vartheta_k}{\Delta t}\big)dx=0.
\end{align*}
Then, for any $\delta>0$, we have
\begin{align*}
&\l\|\frac{\vartheta_{k+1}-\vartheta_k}{\Delta t}\r\|^2
+\frac{1}{2\h}\l[\l\|\nabla \vartheta_{k+1}\r\|^2-\l\|\nabla \vartheta_{k}\r\|^2 + \l\|\nabla (\vartheta_{k+1}-\vartheta_{k})\r\|^2\r] \\
&\leq
\frac{1}{2\delta}\l\|\frac{\chi_{k+1}-\chi_{k}}{\h} - h_{k}\frac{w_{k+1}-w_{k}}{\h}\r\|^2+\frac{\delta}{2}\l\|\frac{\vartheta_{k+1}-\vartheta_k}{\Delta t}\r\|^2.\\
\end{align*}
By choosing $\delta=1$, taking the expectation and summing from $k=0$ to $n$, one gets 
\begin{align*}
&\sum_{k=0}^{n}\h E\l[\l\|\frac{\vartheta_{k+1}-\vartheta_k}{\Delta t}\r\|^2\r]
+\sum_{k=0}^{n}E\l[\l\|\nabla \vartheta_{k+1}\r\|^2\r]-\sum_{k=0}^{n}E\l[\l\|\nabla \vartheta_{k}\r\|^2\r] \\
 &+\sum_{k=0}^{n}E\l[\l\|\nabla (\vartheta_{k+1}-\vartheta_{k})\r\|^2\r] 
\leq \sum_{k=0}^{n}\h E\l[\l\|\frac{\chi_{k+1}-\chi_{k}}{\h} - h_{k}\frac{w_{k+1}-w_{k}}{\h}\r\|^2\r].
\end{align*}
Thanks to (\ref{estimation2c1}), one concludes that
\begin{align*}
\sum_{k=0}^{n}\h E\l[\l\|\frac{\vartheta_{k+1}-\vartheta_k}{\Delta t}\r\|^2\r]+E\l[\l\|\nabla \vartheta_{n+1}\r\|^2\r]+ \sum_{k=0}^{n}E\l[\l\|\nabla (\vartheta_{k+1}-\vartheta_{k})\r\|^2\r] 
\leq C.
\end{align*}
Finally, we have directly the announced estimates
\begin{align*}
|| \nabla\tilde{\vartheta}^{\h}||_{L^\infty(0,T,L^2(\Omega\times D))},||\nabla \vartheta^{\h}||_{L^\infty(0,T;L^2(\Omega\times D))} &\leq C,\\
|| \nabla(\tilde{\vartheta}^{\h}-\vartheta^{\h})||_{L^2(\Omega\times Q)} &\leq C\sqrt{\h}, \\
\l\|\partial_t\tilde{\vartheta}^{\h}\r\|_{L^2(\Omega \times Q)}&\leq C.
\end{align*}
Arguing as in the proof of (\ref{utmubis1}), one shows finally that 
$$|| \nabla\big(\tilde{\vartheta}^{\h}-\tilde{\vartheta}^{\h}(.-\Delta t)\big) ||_{L^2(\Omega \times Q)} \leq C\sqrt{\h}.$$
\end{proof}

\subsection{Weak convergence results on the approximate sequences}

Due to Propositions \ref{fae}, \ref{fbae} and \ref{sae}, we obtain the following convergence results.
\begin{prop}\label{CVuc1}
Up to subsequences denoted in the same way, there exists  $\vartheta$  belonging to $\mathcal{N}^2_w(0,T,H^{1}(D))\cap L^{2}\big(\Omega, H^1(Q)\big)$  such that
\begin{equation*}
\begin{array}{ll}
(i)&\tilde{\vartheta^{\h}},  \vartheta^{\h} \rightharpoonup \vartheta \mbox{ in }L^2((0,T)\times\Omega,H^1(D)), \\
(ii)&\nabla \tilde{\vartheta}^{\h}, \nabla \vartheta^{\h} \wstarconverge \nabla \vartheta \mbox{ in } L^\infty(0,T;L^2(\Omega\times D)),\\
(iii)&\partial_t\tilde\vartheta^{\h}\rightharpoonup \partial_t\vartheta \mbox{ in }L^2(\Omega\times Q),\\
(iv)&\tilde{\vartheta}^{\h}(0) \di\rightharpoonup \vartheta(0)\text{ in }L^{2}(\Omega\times D).
\end{array}
\end{equation*}
\end{prop}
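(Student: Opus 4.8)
The plan is to derive each convergence by a weak or weak-$*$ compactness argument from the uniform bounds of Propositions \ref{fae}, \ref{fbae} and \ref{sae}, and then to identify every limit with a single function $\vartheta$ together with its derivatives. For (i), combining the $L^\infty(0,T;L^2(\Omega\times D))$ bound (\ref{2c13}) with the gradient bound (\ref{2c12}) shows that $\tilde{\vartheta}^{\h}$ and $\vartheta^{\h}$ are bounded in the Hilbert space $L^2((0,T)\times\Omega,H^1(D))$, so up to a subsequence $\tilde{\vartheta}^{\h}\rightharpoonup\vartheta$ and $\vartheta^{\h}\rightharpoonup\bar\vartheta$ there. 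Since (\ref{2c15}) gives $\tilde{\vartheta}^{\h}-\vartheta^{\h}\to0$ strongly in $L^2(\Omega\times Q)$, the two weak limits must agree, which proves (i).

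For (ii) I would use that $L^\infty(0,T;L^2(\Omega\times D))$ is the dual of the separable space $L^1(0,T;L^2(\Omega\times D))$, so the bound (\ref{sp1}) yields a subsequence along which $\nabla\tilde{\vartheta}^{\h}$ and $\nabla\vartheta^{\h}$ converge weakly-$*$; the weak $L^2$ convergence of (i) already forces $\nabla\tilde{\vartheta}^{\h}\rightharpoonup\nabla\vartheta$ in $L^2(\Omega\times Q)$, and testing the weak-$*$ limit against functions in $L^2\subset L^1$ identifies it with $\nabla\vartheta$. For (iii), the bound (\ref{sp3}) gives $\partial_t\tilde{\vartheta}^{\h}\rightharpoonup\zeta$ weakly in $L^2(\Omega\times Q)$; integrating by parts in time against test functions $\varphi(t)\,v$ with $\varphi\in\mathscr{D}(0,T)$ and $v\in L^2(\Omega\times D)$ and passing to the limit with (i) shows $\zeta=\partial_t\vartheta$ in the distributional sense, which also yields $\vartheta\in L^2(\Omega,H^1(Q))$.

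The membership $\vartheta\in\mathcal{N}^2_w(0,T,H^1(D))$ requires some care, since $\tilde{\vartheta}^{\h}$ and $\vartheta^{\h}$ are not themselves adapted. I would pass through the shifted processes $\tilde{\vartheta}^{\h}(\cdot-\h)$, which by (\ref{butuutbis}) lie in the closed subspace $\mathcal{N}^2_w(0,T,H^1(D))$ of $L^2((0,T)\times\Omega,H^1(D))$ and are bounded there, and which by (\ref{utmubis1}) differ from $\tilde{\vartheta}^{\h}$ by $O(\sqrt{\h})$ in $L^2(\Omega\times Q)$, hence share the weak limit $\vartheta$; since a closed subspace is weakly closed by Mazur's theorem, $\vartheta$ is predictable. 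For (iv), the embedding $H^1(0,T;L^2(D))\hookrightarrow\mathscr{C}([0,T],L^2(D))$ applied $\omega$-wise makes the trace $\vartheta(0)$ meaningful; integrating by parts against $\varphi(t)\,v$ with $\varphi\in\mathscr{C}^1([0,T])$, $\varphi(T)=0$ and $v\in L^2(\Omega\times D)$ produces the boundary term $\varphi(0)\,\tilde{\vartheta}^{\h}(0)$, and passing to the limit with (i) and (iii) and comparing boundary terms gives $\tilde{\vartheta}^{\h}(0)\rightharpoonup\vartheta(0)$ weakly in $L^2(\Omega\times D)$.

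I expect the main obstacle to be the joint identification of the weak-$*$ limit in (ii) with $\nabla\vartheta$ and the distributional identifications of $\partial_t\vartheta$ and of the initial trace, all while preserving predictability through the shifted sequences; the compactness extractions themselves are routine.
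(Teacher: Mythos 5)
Your proposal is correct and follows essentially the same route as the paper: weak and weak-$*$ extraction from the bounds of Propositions \ref{fae}, \ref{fbae} and \ref{sae}, identification of the limits of $\tilde{\vartheta}^{\h}$ and $\vartheta^{\h}$ via the strong $O(\sqrt{\h})$ estimates, predictability of $\vartheta$ through the shifted adapted sequence $\tilde{\vartheta}^{\h}(\cdot-\h)$ and the weak closedness of $\mathcal{N}^2_w(0,T,H^1(D))$, and the trace at $t=0$ via the embedding of $L^2(\Omega,H^1(Q))$ into $L^2\big(\Omega,\mathscr{C}([0,T],L^2(D))\big)$. The only difference is that you spell out the distributional identifications of $\nabla\vartheta$, $\partial_t\vartheta$ and the boundary term, which the paper leaves implicit.
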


\begin{proof}\quad  \\
$(i)$ Thanks to (\ref{2c13}), (\ref{2c15}), (\ref{2c12}), (\ref{utmubis1}), (\ref{butuutbis}), (\ref{sp2}) and (\ref{sp2bis}), there exists $\vartheta$  in $L^2((0,T)\times\Omega,H^1(D))$
such that, up to subsequences denoted in the same way, we have
\begin{equation*}
 \tilde{\vartheta}^{\h},\vartheta^{\h}, \vartheta^{\h}(.-\h) \rightharpoonup \vartheta\text{  in }L^2((0,T)\times\Omega,H^1(D)).
\end{equation*}
Since $\tilde{\vartheta}^{\h}(.-\h)$ belongs to the Hilbert space $\mathcal{N}^2_w(0,T,H^{1}(D))$ endowed with the norm of $L^2((0,T)\times \Omega,H^1(D))$, one gets that $\vartheta$ is also in  $\mathcal{N}^2_w(0,T,H^{1}(D))$.\\
$(ii)$ Using (\ref{sp1})-(\ref{sp2}), one gets directly that up to subsequences denoted in the same way,
\begin{equation*}
\nabla \tilde{\vartheta}^{\h},\nabla \vartheta^{\h} \wstarconverge \nabla \vartheta\text{ in }L^\infty(0,T;L^2(\Omega\times D)).
\end{equation*}
$(iii)$ (\ref{sp3}) gives us directly the announced result.\\
$(iv)$ Since $L^{2}\big(\Omega, H^1(Q)\big)$ is continuously embedded in $L^2\big(\Omega,\mathscr{C}([0,T], L^{2}(D))\big)$, one gets that $\di \vartheta$ belongs to $L^2\big(\Omega,\mathscr{C}([0,T], L^{2}(D))\big)$. Thus, $\vartheta$ is an element of $\mathscr{C}([0,T],L^{2}(\Omega\times D))$ and we have
and that $$\tilde\vartheta^{\h}(0)\rightharpoonup \di \vartheta(0)\text{ in }L^{2}(\Omega\times D).$$
\end{proof}
\begin{prop}\label{CVuc1bis}
Up to subsequences denoted in the same way, there exist  $\chi$  belonging to $\mathcal{N}^2_w(0,T,H^{1}(D))\cap L^{2}\big(\Omega, \mathscr{C}([0,T],L^2(D))\big)$
and $ \bar{\chi}$ in $L^{2}(\Omega\times Q)$   such that
\begin{equation*}
\begin{array}{ll}
(i)&\tilde{\chi}^{\h},  \chi^{\h} \rightharpoonup \chi \mbox{ in } L^2((0,T)\times\Omega,H^1(D)), \\
(ii)&\nabla \tilde{\chi}^{\h}, \nabla \chi^{\h} \wstarconverge \nabla \chi \mbox{ in } L^\infty(0,T;L^2(\Omega\times D)),\\
(iii)&\di\tilde{\chi}^{\h}-\tilde{B}^{\h}\rightharpoonup \chi-\int_{0}^{.}hdw \text{ in } L^{2}(\Omega,H^1(Q)), \\
(iv)&\ae\big(\partial_t(\tilde{\chi}^{\h}-\tilde{B}^{\h})\big) \rightharpoonup \bar{\chi} \mbox{ in } L^2(\Omega\times Q), \\
(v)&\big(\tilde{\chi}^{\h}-\tilde{B}^{\h}\big)(0) \di\rightharpoonup \chi(0)\text{ in }L^{2}(\Omega\times D).\\
\end{array}
\end{equation*}
\end{prop}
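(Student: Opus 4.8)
The plan is to mirror the proof of Proposition \ref{CVuc1}, extracting weak and weak-$*$ limits from the a priori bounds of Propositions \ref{fae} and \ref{fbae}, and then identifying them by combining the weak convergence of the approximate sequences with the strong convergence of $\tilde{B}^{\h}$ obtained in Proposition \ref{CVBc1}. None of the five statements requires more than standard weak-compactness arguments in Hilbert spaces; the only point demanding care is the identification of the limits in (iii) and (v).

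For (i) I would first invoke the bounds (\ref{butuut}) and (\ref{butuutbis}) on $\tilde{\chi}^{\h}$, $\chi^{\h}$ and $\tilde{\chi}^{\h}(.-\h)$ in the Hilbert space $L^2((0,T)\times\Omega,H^1(D))$; by reflexivity a subsequence converges weakly, and thanks to (\ref{utmu}) and (\ref{utmubis}) the three sequences share the same weak limit $\chi$. Since $\tilde{\chi}^{\h}(.-\h)$ lies in the closed subspace $\mathcal{N}^2_w(0,T,H^1(D))$, the limit $\chi$ is again predictable, hence $\chi\in\mathcal{N}^2_w(0,T,H^1(D))$. For (ii), the bound (\ref{2c1}) in $L^\infty(0,T;L^2(\Omega\times D))=\big(L^1(0,T;L^2(\Omega\times D))\big)^*$ together with Banach--Alaoglu yields weak-$*$ limits for $\nabla\tilde{\chi}^{\h}$ and $\nabla\chi^{\h}$, which must coincide with $\nabla\chi$ by consistency with (i).

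For (iii), I would note that (\ref{6c1}), (\ref{5c1}) and (\ref{1c1}) control respectively $\tilde{\chi}^{\h}-\tilde{B}^{\h}$, its spatial gradient and its time derivative in $L^2(\Omega\times Q)$, so that $\tilde{\chi}^{\h}-\tilde{B}^{\h}$ is bounded in $L^2(\Omega,H^1(Q))$ and converges weakly along a subsequence. To identify the limit I would combine $\tilde{\chi}^{\h}\rightharpoonup\chi$ from (i) with the strong convergence $\tilde{B}^{\h}\to\int_0^. h dw$ provided by Proposition \ref{CVBc1}, giving the weak limit $\chi-\int_0^. h dw$. Part (iv) is the shortest: since $\ae$ is Lipschitz with $\ae(0)=0$ one has $|\ae(x)|\le C_{\alpha}|x|$, so (\ref{1c1}) bounds $\ae\big(\partial_t(\tilde{\chi}^{\h}-\tilde{B}^{\h})\big)$ in $L^2(\Omega\times Q)$ and a weak limit $\bar{\chi}$ exists. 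For (v), the continuous embedding $H^1(Q)\hookrightarrow\mathscr{C}([0,T],L^2(D))$ places $\chi-\int_0^. h dw$ in $L^2(\Omega,\mathscr{C}([0,T],L^2(D)))$; the weak convergence in $L^2(\Omega,H^1(Q))$ then passes to the time-zero traces, and since $\tilde{B}^{\h}(0)=B_0=0$ and $\tilde{\chi}^{\h}(0)=\chi_0$ this gives $(\tilde{\chi}^{\h}-\tilde{B}^{\h})(0)=\chi_0\rightharpoonup\chi(0)$, which at the same time shows $\chi\in L^2(\Omega,\mathscr{C}([0,T],L^2(D)))$ and $\chi(0)=\chi_0$.

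The genuine subtlety inside this proposition is the limit identification in (iii), which works only because Proposition \ref{CVBc1} supplies \emph{strong} (not merely weak) convergence of $\tilde{B}^{\h}$. I would stress that (iv) produces nothing more than the existence of the weak limit $\bar{\chi}$: its identification with $\ae\big(\partial_t(\chi-\int_0^. h dw)\big)$ is the genuinely hard step of the whole scheme and is deliberately postponed, to be carried out later via maximal monotone operator arguments.
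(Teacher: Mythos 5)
Your proof of (i)--(iv) follows the paper's own argument step by step and is correct: (i) and (ii) extract weak and weak-$*$ limits from the bounds (\ref{utmu}), (\ref{utmubis}), (\ref{butuut}), (\ref{butuutbis}), (\ref{2c1}), using that $\mathcal{N}^2_w(0,T,H^1(D))$ is a closed (hence weakly closed) subspace to keep predictability of $\chi$; (iii) obtains the $L^2(\Omega,H^1(Q))$ bound from (\ref{6c1}), (\ref{5c1}), (\ref{1c1}) and identifies the limit via Proposition \ref{CVBc1} and uniqueness of weak limits; (iv) only needs the Lipschitz property of $\alpha$ together with $\alpha(0)=0$ and (\ref{1c1}). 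This is exactly the route taken in the paper.

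The one genuine gap is the regularity claim in (v). The weak convergence of the traces $(\tilde{\chi}^{\h}-\tilde{B}^{\h})(0)\rightharpoonup\chi(0)$ does not ``at the same time show'' that $\chi\in L^{2}\big(\Omega,\mathscr{C}([0,T],L^2(D))\big)$: convergence of traces says nothing about the time-continuity of $\chi$, and this membership is part of the statement to be proved. What the embedding $L^2(\Omega,H^1(Q))\hookrightarrow L^{2}\big(\Omega,\mathscr{C}([0,T],L^2(D))\big)$ gives you is the continuity in time of $U=\chi-\int_0^{.}h\,dw$ only; to transfer it to $\chi$ you must add that $\int_0^{.}h\,dw$ itself belongs to $L^{2}\big(\Omega,\mathscr{C}([0,T],L^2(D))\big)$, which holds because the It\^o integral of an $\mathcal{N}^2_w(0,T,L^2(D))$ process is a continuous square-integrable $L^2(D)$-valued martingale --- this is precisely the step the paper makes explicit, citing \cite{DaPratoZabczyk}. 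With that one line added your proof coincides with the paper's. A final cosmetic point: your remark that the identification in (iii) ``works only because'' Proposition \ref{CVBc1} gives strong convergence is an overstatement --- weak convergence of $\tilde{B}^{\h}$ would already force the weak limit of the difference to be $\chi-\int_0^{.}h\,dw$, since the weak limit of a sum is the sum of the weak limits; the strong convergence becomes indispensable only later, in the energy argument of Proposition \ref{energiec1}, where $\Delta B^{\h}$ is paired against a merely weakly convergent sequence.
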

\begin{proof} $(i)$ Thanks to (\ref{utmu})-(\ref{utmubis})-(\ref{butuut}) and (\ref{butuutbis}), there exists $\chi$  in $L^2((0,T)\times\Omega,H^1(D))$ such that, up to subsequences denoted in the same way, we have
\begin{equation*}
 \tilde{\chi}^{\h},\chi^{\h}, \tilde{\chi}^{\h}(.-\h) \rightharpoonup \chi\text{  in }L^2((0,T)\times\Omega,H^1(D)).
\end{equation*}
Since $\tilde{\chi}^{\h}(.-\h)$ belongs to the Hilbert space $\mathcal{N}^2_w(0,T,H^{1}(D))$ endowed with the norm of $L^2((0,T)\times \Omega,H^1(D))$, one gets that $\chi$ is also in  $\mathcal{N}^2_w(0,T,H^{1}(D))$.\\
$(ii)$ Using (\ref{2c1})-(\ref{utmu}), one gets directly that up to subsequences denoted in the same way,
\begin{equation*}
\nabla \tilde{\chi}^{\h},\nabla \chi^{\h} \wstarconverge \nabla \chi\text{  in }L^\infty(0,T;L^2(\Omega\times D)).
\end{equation*}
$(iii)$ Thanks to (\ref{1c1})-(\ref{6c1})-(\ref{5c1}), there exists $\zeta$ in  $L^\infty(0,T;L^2(\Omega\times D))$ and $L^{2}\big(\Omega, H^1(Q)\big)$ such that, up to a subsequence, $$\tilde{\chi}^{\h}-\tilde{B}^{\h}\rightharpoonup \zeta \text{  in }L^{2}\big(\Omega, H^1(Q)\big)\text{ and }\tilde{\chi}^{\h}-\tilde{B}^{\h} \wstarconverge \zeta \text{  in } L^\infty(0,T;L^2(\Omega\times D)).$$ Using Proposition \ref{CVBc1}, one gets by uniqueness of the limit that $$\di\zeta=\chi-\int_{0}^{.}hdw.$$
$(iv)$ Due to the Lipschitz property of $\ae$ and (\ref{1c1}), $\ae\big(\partial_t(\tilde{\chi}^{\h}-\tilde{B}^{\h})\big) $ is bounded in $L^{2}(\Omega\times Q)$ and there exists $\bar{\chi}$ in the same space such that, up to a subsequence
\begin{equation*}
\ae\big(\partial_t(\tilde{\chi}^{\h}-\tilde{B}^{\h})\big) \rightharpoonup \bar{\chi} \mbox{ in } L^2(\Omega\times Q).
\end{equation*} \\
$(v)$ Since $L^{2}\big(\Omega, H^1(Q)\big)$ is continuously embedded in $L^2\big(\Omega,\mathscr{C}([0,T], L^{2}(D))\big)$, one gets that $\di \chi-\int_{0}^{.}hdw$ belongs to $L^2\big(\Omega,\mathscr{C}([0,T], L^{2}(D))\big)$. Moreover, as the It\^o integral of an $\mathcal{N}_{w}^{2}(0,T,L^{2}(D))$ process is a continuous square integrable $L^{2}(D)$-valued martingale (see \cite{DaPratoZabczyk}), $\di\int_{0}^{.}hdw$ is in $L^{2}\left(\Omega, \mathscr{C}([0,T], L^{2}(D))\right)$. Thus $\chi$ belongs to $L^2\big(\Omega,\mathscr{C}([0,T], L^{2}(D))\big)$ and finally $\chi$ is an element of $\mathscr{C}([0,T],L^{2}(\Omega\times D))$. Particularly, we have $$\tilde{\chi}^{\h}(0)-\tilde{B}^{\h}(0)\rightharpoonup \di \big(\chi-\int_{0}^{.}hdw\big)(0)=\chi(0)\text{ in }L^{2}(\Omega\times D).$$
\end{proof}
Using these convergence results, let us derive some properties satisfied by the weak limits $\vartheta$ and $\chi$.

\subsection{Properties of the weak limits $\vartheta$ and $\chi$}

\begin{prop}\label{initc}$\vartheta(0)=\vartheta_0$ and $\chi(0)=\chi_0$ in $L^2(D)$.
\end{prop}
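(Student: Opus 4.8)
The plan is to identify the weak limits $\tilde{\vartheta}^{\h}(0)$ and $\big(\tilde{\chi}^{\h}-\tilde{B}^{\h}\big)(0)$ with the prescribed initial data, exploiting the fact that the time-interpolated approximations all carry the same initial value. Concretely, by the very definition of the interpolation scheme one has $\tilde{\vartheta}^{\h}(t_0)=\vartheta_0$ and $\tilde{\chi}^{\h}(t_0)=\chi_0$ for every $\h$, and moreover $\tilde{B}^{\h}(0)=B_0=0$. Hence the sequences of initial values are in fact \emph{constant} in $\h$, equal to $\vartheta_0$ and $\chi_0$ respectively, and a fortiori they converge strongly in $L^2(\Omega\times D)$ to these constants.

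First I would invoke the convergences $(iv)$ of Proposition \ref{CVuc1} and $(v)$ of Proposition \ref{CVuc1bis}, which assert that $\tilde{\vartheta}^{\h}(0)\rightharpoonup \vartheta(0)$ and $\big(\tilde{\chi}^{\h}-\tilde{B}^{\h}\big)(0)\rightharpoonup \chi(0)$ weakly in $L^2(\Omega\times D)$. These hold because the embedding of $L^2(\Omega,H^1(Q))$ into $L^2(\Omega,\mathscr{C}([0,T],L^2(D)))$ makes the trace at $t=0$ a continuous linear operator, so it is weakly continuous and commutes with the weak limits already established. Then I would combine this with the elementary observation above: since $\tilde{\vartheta}^{\h}(0)=\vartheta_0$ is the constant sequence, its weak limit is $\vartheta_0$; by uniqueness of the weak limit in $L^2(\Omega\times D)$ one concludes $\vartheta(0)=\vartheta_0$. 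Likewise, since $\big(\tilde{\chi}^{\h}-\tilde{B}^{\h}\big)(0)=\chi_0-0=\chi_0$ is constant, uniqueness of the weak limit yields $\chi(0)=\chi_0$. Both identities hold in $L^2(D)$ $P$-almost surely, as required by Remark \ref{cid}.

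There is essentially no hard step here; the only point demanding a little care is justifying that the trace-at-zero functional passes to the weak limit, i.e. that $(iv)$ and $(v)$ genuinely capture $\vartheta(0)$ and $\chi(0)$ as the traces of the limit functions rather than as abstract weak limits of the boundary values. This is precisely what Propositions \ref{CVuc1} and \ref{CVuc1bis} have already supplied, so the proof reduces to reading off the constant-sequence identification and appealing to uniqueness of weak limits. I would write this out in two or three lines, first for $\vartheta$ and then for $\chi$.
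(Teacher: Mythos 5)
Your proof is correct and follows essentially the same route as the paper: both invoke Proposition \ref{CVuc1}$(iv)$ and Proposition \ref{CVuc1bis}$(v)$ for the weak convergence of the traces at $t=0$, observe that $\tilde{\vartheta}^{\h}(0)=\vartheta_0$ and $\big(\tilde{\chi}^{\h}-\tilde{B}^{\h}\big)(0)=\chi_0$ are constant sequences, and conclude by uniqueness of weak limits. The only cosmetic difference is that the paper explicitly flags that $\vartheta_0$ and $\chi_0$ being deterministic is what turns the identification in $L^{2}(\Omega\times D)$ into an identity in $L^{2}(D)$, a point you handle equivalently by noting the identities hold $P$-almost surely as required by Remark \ref{cid}.
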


\begin{proof} Thanks to Proposition \ref{CVuc1} and Proposition \ref{CVuc1bis}, we have
$$\tilde{\vartheta}^{\h}(0) \di\rightharpoonup \vartheta(0)\text{ and }\big(\tilde{\chi}^{\h}-\tilde{B}^{\h}\big)(0) \di\rightharpoonup \chi(0)\text{ in }L^{2}(\Omega\times D).$$
Note that $\tilde{\vartheta}^{\h}(0)=\vartheta_0$ and $\big(\tilde{\chi}^{\h}-\tilde{B}^{\h}\big)(0)=\chi_0$. Using the fact that $\chi_0$ and $\vartheta_0$ are deterministic, one concludes that the announced result holds in $L^2(D)$.
\end{proof}

\begin{prop}\label{CoGr1} The following results hold:
\begin{enumerate}
\item[($i$)] The application $t\in[0,T]\mapsto E\l[\|\nabla \vartheta(t)\|^{2}\r]\in \R$ is continuous.
\item[($ii$)] $\vartheta$ belongs to the space $\mathscr{C}\l([0,T], L^{2}(\Omega, H^1(D))\r)$.
\end{enumerate}
\end{prop}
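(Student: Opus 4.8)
The plan is to deduce both statements from a single continuity property, namely that $\nabla\vartheta$ is \emph{strongly} continuous from $[0,T]$ into $L^{2}(\Omega\times D)$. Indeed, we already know from Proposition \ref{CVuc1} that $\vartheta\in L^{2}(\Omega, H^1(Q))\hookrightarrow L^{2}(\Omega,\mathscr{C}([0,T],L^2(D)))$, so that $\vartheta\in\mathscr{C}([0,T],L^2(\Omega\times D))$ and $t\mapsto E[\|\vartheta(t)\|^2]$ is continuous. If, in addition, $t\mapsto\nabla\vartheta(t)$ is continuous into $L^2(\Omega\times D)$, then $t\mapsto E[\|\nabla\vartheta(t)\|^2]$ is continuous, which is $(i)$; and since $E[\|\vartheta(t)\|^2_{H^1(D)}]=E[\|\vartheta(t)\|^2]+E[\|\nabla\vartheta(t)\|^2]$, the map $t\mapsto\vartheta(t)$ is then continuous into $L^2(\Omega,H^1(D))$, which is $(ii)$.

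To prove this continuity I would first identify the equation solved by $\vartheta$. Passing to the limit in the discrete formulation (\ref{dc1}) yields, $t$-a.e. and $P$-a.s.,
\begin{equation*}
\int_D\partial_t\vartheta\,v\,dx+\int_D\partial_t\Big(\chi-\int_0^\cdot h\,dw\Big)v\,dx+\int_D\nabla\vartheta\cdot\nabla v\,dx=0,\qquad\forall v\in H^1(D),
\end{equation*}
which, read as an elliptic problem in the space variable, says $-\Delta\vartheta=\partial_t\vartheta+\partial_t\big(\chi-\int_0^\cdot h\,dw\big)$ together with a homogeneous Neumann boundary condition. The right-hand side belongs to $L^2(\Omega\times Q)$: the term $\partial_t\vartheta$ by Proposition \ref{CVuc1}$(iii)$, and the term $\partial_t\big(\chi-\int_0^\cdot h\,dw\big)$ by Proposition \ref{CVuc1bis}$(iii)$ together with estimate (\ref{1c1}). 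Since $D$ is smooth and bounded, elliptic regularity for the Neumann Laplacian then gives $\vartheta\in L^2\big(\Omega\times(0,T);H^2(D)\big)$.

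With this regularity at hand, $\nabla\vartheta\in L^2(0,T;H^1(D))$ and $\partial_t\nabla\vartheta=\nabla\partial_t\vartheta\in L^2(0,T;H^{-1}(D))$, both understood in the $L^2(\Omega)$-averaged sense. The classical Lions--Magenes integration-by-parts lemma applied to $\nabla\vartheta$ with the Gelfand triple $H^1(D)\hookrightarrow L^2(D)\hookrightarrow H^{-1}(D)$ then yields $\nabla\vartheta\in\mathscr{C}([0,T];L^2(\Omega\times D))$ together with the energy identity
\begin{equation*}
E\big[\|\nabla\vartheta(t)\|^2\big]-E\big[\|\nabla\vartheta(s)\|^2\big]=-2\int_s^t E\big[(\Delta\vartheta,\partial_t\vartheta)\big]\,d\tau ,
\end{equation*}
whose right-hand side, since $\Delta\vartheta=\partial_t\vartheta+\partial_t\big(\chi-\int_0^\cdot h\,dw\big)$ and $\Delta\vartheta,\partial_t\vartheta\in L^2(\Omega\times Q)$, is an absolutely continuous function of $t$. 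This proves the strong continuity of $\nabla\vartheta$ in $L^2(\Omega\times D)$ and hence both $(i)$ and $(ii)$.

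The main obstacle is the rigorous justification of the $H^2$-regularity step and of the energy identity. Obtaining $\vartheta\in L^2(\Omega\times(0,T);H^2(D))$ requires the identification $-\Delta\vartheta=\partial_t\vartheta+\partial_t\big(\chi-\int_0^\cdot h\,dw\big)$ as an $L^2(D)$-identity (not merely in $H^{-1}(D)$), and the application of elliptic regularity measurably in $(\omega,t)$; likewise the Lions--Magenes lemma must be invoked directly in the Hilbert space $L^2(\Omega\times D)$, so that the identity survives the expectation. A route for $(ii)$ that sidesteps the full strong continuity would instead combine the weak continuity of $t\mapsto\vartheta(t)$ into $L^2(\Omega,H^1(D))$ (which follows from the $L^\infty(0,T;L^2(\Omega;H^1(D)))$ bound of Proposition \ref{CVuc1}$(ii)$ together with the already established $L^2(\Omega\times D)$-continuity) with the norm continuity $(i)$, using that weak convergence plus convergence of norms gives strong convergence in a Hilbert space; but this still hinges on proving $(i)$, for which the regularity-based argument above seems the cleanest, a purely discrete passage to the limit in (\ref{estimation2c1}) giving only lower semicontinuity and hence a single inequality.
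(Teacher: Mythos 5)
Your plan rests on the same observation as the paper's proof: $P$-a.s.\ $\vartheta$ solves the heat equation $\partial_t\vartheta-\Delta\vartheta=-\partial_t U$, $U=\chi-\int_0^\cdot h\,dw$, with $\partial_t\vartheta,\partial_tU\in L^2(\Omega\times Q)$ and $\vartheta_0\in H^1(D)$, and everything follows from the energy identity
\begin{equation*}
\int_{Q_t}|\partial_t\vartheta|^2\,ds\,dx+\int_{Q_t}\partial_tU\,\partial_t\vartheta\,ds\,dx+\frac12\|\nabla\vartheta(t)\|^2=\frac12\|\nabla\vartheta_0\|^2 ,
\end{equation*}
whose $t$-dependent integrals are absolutely continuous, so that taking expectations and using dominated convergence gives ($i$). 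The paper obtains this identity by invoking the classical parabolic regularity theorem pathwise (\cite{Brezisbis}, Theorem X.11) and then takes expectations; for ($ii$) it combines the norm continuity from ($i$) with weak continuity of $t\mapsto \vartheta(t)$ in $L^2(\Omega,H^1(D))$ (via \cite{Lions}, Lemme 8.1), i.e.\ exactly the fallback route you mention in your last paragraph.

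The genuine gap is in your derivation of that identity. The Lions--Magenes lemma applied to $\nabla\vartheta$ with the triple $H^1(D)\hookrightarrow L^2(D)\hookrightarrow H^{-1}(D)$ does not apply as stated: $H^{-1}(D)$ is the dual of $H^1_0(D)$, not of $H^1(D)$, so $\nabla\vartheta\in L^2(0,T;H^1(D)^d)$ and $\partial_t\nabla\vartheta=\nabla\partial_t\vartheta\in L^2(0,T;H^{-1}(D)^d)$ do not form a matching evolution triple $u\in L^2(0,T;V)$, $u'\in L^2(0,T;V')$. If instead you try to realize $\nabla\partial_t\vartheta$ as an element of the dual of $H^1(D)^d$ through the pairing $\phi\mapsto -E\int_D\partial_t\vartheta\,\mathrm{div}\,\phi\,dx$, this functional is \emph{not} the $V'$-valued distributional derivative of $\nabla\vartheta$: already for smooth functions the two differ by the boundary term $E\int_{\partial D}\partial_t\vartheta\,\phi\cdot\mathbf{n}\,dS$, which is not controlled by $\|\partial_t\vartheta(t)\|_{L^2(D)}$ (no trace of $\partial_t\vartheta$ is available). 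This boundary term is precisely where the Neumann condition must be used, and the abstract Gelfand-triple formulation, as you set it up, cannot see it. (The elliptic $H^2$-regularity step itself is sound on a smooth domain — the compatibility condition follows from taking $v=1$ in the limit of (\ref{dc1}) — and your sign slip $-\Delta\vartheta=\partial_t\vartheta+\partial_tU$ versus $\Delta\vartheta=\partial_t\vartheta+\partial_tU$ is harmless; but the $H^2$ detour does not by itself close the gap.)

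A correct repair stays closer to the variational formulation rather than the abstract lemma: since $\vartheta\in L^2(0,T;H^1(D))$ a.s., a time-mollification $(\partial_t\vartheta)_\epsilon(t)$ belongs to $H^1(D)$ and is an admissible test function in the mollified limit equation; testing with it gives $\frac{d}{dt}\frac12\|\nabla\vartheta_\epsilon(t)\|^2=-\int_D(\partial_t\vartheta+\partial_tU)_\epsilon(\partial_t\vartheta)_\epsilon\,dx$, and letting $\epsilon\to0$ yields the energy identity with no need of $H^2$ regularity — this is in substance the proof of the theorem the paper cites. As written, however, your central step would fail, and with it both the strong continuity of $\nabla\vartheta$ and your primary route to ($ii$); the statement itself is of course recoverable either by this repair or, as the paper does, by quoting \cite{Brezisbis} pathwise and then arguing ($ii$) through weak continuity plus convergence of norms.
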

\begin{proof}($i$) Note that $P$-almost surely in $\Omega$, $\di \vartheta$ satisfies the heat equation
\begin{equation*}
\left\{\begin{array}{rlc}
\partial_{t}\vartheta-\Delta \vartheta&=&-\partial_t U, \label{eqchal1c11}\\
\vartheta(0,.)&=&\vartheta_{0},
\end{array}\right.
\end{equation*}
where $\di U=\chi-\int_{0}^{.}hdw$.  Since $\vartheta_{0}\in H^1(D)$, the study of the heat equation gives us the following energy equality (see \cite{Brezisbis} Theorem X.11 p.220), for any $t\in[0,T]$  by denoting $Q_{t}=(0,t)\times D$:
\begin{align*}
&\int_{ Q_{t}}|\partial_{t}\vartheta|^2dsdx + \int_{ Q_{t}}\partial_{t}U\partial_{t}\vartheta dsdx + \frac{1}{2}||\nabla \vartheta(t)||^2 = \  \frac{1}{2} ||\nabla \vartheta_{0}||^2.
\end{align*}
Then by taking the expectation:
\begin{align}\label{ContF1}
&E\left[\int_{ Q_{t}}|\partial_{t}\vartheta|^2dsdx \right]+E\left[\int_{ Q_{t}}\partial_{t}U\partial_{t}\vartheta dsdx\right] \nonumber\\
&+ \frac{1}{2}E\left[||\nabla \vartheta(t)||^2\right]= \  \frac{1}{2} E\left[||\nabla \vartheta_{0}||^2\right].
\end{align}
Using (\ref{ContF1}) and the properties of the Lebesgue integral, one gets the continuity of $$t \in [0,T]\mapsto E\left[\|\nabla \vartheta(t)\|^{2}\right]\in\R.$$
($ii$) Firstly, since $\vartheta$ belongs to $L^2(\Omega, H^1(Q))$, it is also an element of \\$\mathscr{C}\l([0,T], L^{2}(\Omega, L^2(D))\r)$. Combining this with the continuity result proved  in ($i$), one gets that the application 
$$t \in [0,T]\mapsto E\left[\| \vartheta(t)\|^{2}_{H^1(D)}\right]\in\R$$
is also continuous. Secondly, that thanks to the following embedding (see \cite{Lions} Lemme 8.1 p.297):
$$L^{\infty}(0,T; L^2(\Omega, H^1(D))\cap \mathscr{C}\hspace*{-0.05cm}\l([0,T], L^{2}(\Omega, L^2(D))\r)\hspace*{-0.07cm}\subset\hspace*{-0.07cm} \mathscr{C}_w\hspace*{-0.05cm}\l([0,T], L^{2}(\Omega, H^1(D))\r)\footnotemark[2]$$
\footnotetext[2]{$\mathscr{C}_w\l([0,T], L^{2}(\Omega, H^1(D))\r)$ denotes the set of functions defined on $[0, T ]$ with values in $L^{2}(\Omega, H^1(D))$ which are weakly continuous.}one shows that $\vartheta$ also belongs to $\mathscr{C}\l([0,T], L^{2}(\Omega, H^1(D))\r)$.\\ Thus, combining this with the above continuity result, one concludes that $\vartheta$ is an element of $\mathscr{C}\l([0,T], L^{2}(\Omega, H^1(D))\r)$.
\end{proof}

\section{Proof of Theorem \ref{EUc1}} \label{section5c1}
\noindent Thanks to the weak convergence results stated in the previous section, passing to the limit in (\ref{dc1})-(\ref{dc1bis}) with respect to $\h$  is now possible and gives, using the separability of $H^1(D)$, $t$-almost everywhere in $(0,T)$, $P$-almost surely in $\Omega$ and for any $v$ in $H^1(D)$
\begin{align*}
\int_{D}\partial_t\vartheta vdx+
\int_D\partial_t\big(\chi-\int_0^.hdw\big)vdx+\int_D\nabla \vartheta.\nabla vdx=0\\
\int_D\partial_t\big(\chi-\int_0^.hdw\big)vdx+\int_{D}\bar{\chi}vdx+\int_D\nabla \chi.\nabla vdx=\int_D\vartheta vdx.
\end{align*}
Then it remains to identify the nonlinear weak limit $\bar{\chi}$ in $L^{2}(\Omega\times Q)$ of $\alpha(\partial_t(\tilde{\chi}^{\h}-\tilde{B}^{\h}))$. To do so, we suppose  in a first step (only for technical reasons) that $h$ belongs to $\di\mathcal{N}^2_w(0,T,H^{2}(D))$ by following the idea of \cite{BLM}. In a second step (Subsection \ref{step2}), we will obtain the well-posedness result for $h$ in $\di\mathcal{N}^2_w(0,T,H^{1}(D))$.\\

\subsection{Existence result for (\ref{eqc1}) when $h\in\mathcal{N}^{2}_{w}(0,T,H^2(D))$}

\begin{prop} \label{energiec1}Assume that $h$ belongs to $\di\mathcal{N}^2_w(0,T,H^{2}(D))$. Then, up to a subsequence,
\begin{align*}
\ae\l(\partial_{t}(\tilde{\chi}^{\h}-\tilde{B}^{\h})\r)\rightharpoonup \ae\big(\partial_{t}(\chi-\int_{0}^{.}hdw)\big) \text{ in } L^{2}(\Omega\times Q).
\end{align*}
\end{prop}
\begin{proof} 
\noindent Set $n$ in $\{0,...,N-1\}$. We introduce for the sequel the notations $$\di U=\chi-\int_{0}^{.}hdw\text{ and }U_{n+1}=\chi_{n+1}-\sum_{k=0}^{n}(w_{k+1}-w_{k})h_{k}.$$
Firstly, we consider  (\ref{bc2}) with  the test function 
%$\di\frac{U_{n+1}-U_{n}}{\h}$. Using this notation, 
$$\di v=\frac{U_{n+1}-U_{n}}{\h}=\frac{\chi_{n+1}-\chi_{n}}{\h} - h_{n}\frac{w_{n+1}-w_{n}}{\h}.$$
Thus, one gets $P$-a.s in $\Omega$:
\begin{align}\label{vte}
&\int_{D} \left(\frac{U_{n+1}-U_{n}}{\h}\right)^2dx +\int_{D} \ae\left(\frac{U_{n+1}-U_{n}}{\h}\right) \l(\frac{U_{n+1}-U_{n}}{\h}\r)dx\nonumber\\
& +\int_D\nabla U_{n+1}.\nabla\l(\frac{U_{n+1}-U_{n}}{\h}\r)dx\\
=&  \ \sum_{k=0}^{n}(w_{k+1}-w_{k})\int_{D}\Delta h_{k} \l(\frac{U_{n+1}-U_{n}}{\h}\r) dx+\int_D \l(\frac{U_{n+1}-U_{n}}{\h}\r) \vartheta_{n+1}dx.\nonumber
\end{align}
Secondly, (\ref{bc1}) with the test function $v=\vartheta_{n+1}$ gives $P$-a.s in $\Omega$:
\begin{align*}
\di\int_{D}\Big(\frac{\vartheta_{n+1}-\vartheta_n}{\Delta t}\Big)\vartheta_{n+1}dx+\int_{D}\left(\frac{U_{n+1}-U_{n}}{\h}\right)\vartheta_{n+1}dx+\int_D|\nabla \vartheta_{n+1}|^2dx=0
\end{align*}
and then
\begin{align*}
&\int_{D}\left(\frac{U_{n+1}-U_{n}}{\h}\right)\vartheta_{n+1}dx\\
=&-\frac{1}{2\h}\Big[ || \vartheta_{n+1}||^2-|| \vartheta_{n}||^2+|| \vartheta_{n+1}- \vartheta_{n}||^2\Big]-||\nabla \vartheta_{n+1}||^2.
\end{align*}
Injecting this in  (\ref{vte}), we obtain
\begin{align*}
&\h\int_{D} \left(\frac{U_{n+1}-U_{n}}{\h}\right)^2dx +\h\int_{D} \ae\left(\frac{U_{n+1}-U_{n}}{\h}\right)\l(\frac{U_{n+1}-U_{n}}{\h}\r)dx \\
&+ \frac{1}{2}\big(||\nabla U_{n+1}||^2 -  ||\nabla U_{n}||^2\big)+ \frac{1}{2}\big(|| \vartheta_{n+1}||^2-|| \vartheta_{n}||^2\big)\\
&+\frac{1}{2}|| \vartheta_{n+1}- \vartheta_{n}||^2+\h||\nabla \vartheta_{n+1}||^2\\
 \leq&\ \h\int_{D}\Delta B_{n+1} \frac{U_{n+1}-U_{n}}{\h} dx.
\end{align*}
By adding from $n=0$ to $N-1$, we get
\begin{align*}
&\h\sum_{n=0}^{N-1}\int_{D} \left(\frac{U_{n+1}-U_{n}}{\h}\right)^2dx+\h\sum_{n=0}^{N-1}\int_{D} \ae\left(\frac{U_{n+1}-U_{n}}{\h}\right)\l(\frac{U_{n+1}-U_{n}}{\h}\r)dx \\
&+ \frac{1}{2}\sum_{n=0}^{N-1}\big(||\nabla U_{n+1}||^2 - ||\nabla U_{n}||^2\big)+ \frac{1}{2}\sum_{n=0}^{N-1}\big(||\vartheta_{n+1}||^2 - ||\vartheta_{n}||^2\big) +\h\sum_{n=0}^{N-1}||\nabla \vartheta_{n+1}||^2\\
\leq&\
\h\sum_{n=0}^{N-1}\int_{D}\Delta B_{n+1} \frac{U_{n+1}-U_{n}}{\h} dx,
\end{align*}
and then
\begin{align*}
&\displaystyle\int_Q\big|\partial_t(\tilde{\chi}^{\h}-\tilde{B}^{\h})\big|^2dtdx+ \int_{Q} \ae\big(\partial_t(\tilde{\chi}^{\h}-\tilde{B}^{\h})\big)\partial_t(\tilde{\chi}^{\h}-\tilde{B}^{\h})dtdx \\
&+ \frac{1}{2}\big(\|\nabla U_{N}\|^{2}-\|\nabla U_{0}\|^{2}\big)+ \frac{1}{2}\big(\| \vartheta_{N}\|^{2}-\|\vartheta_{0}\|^{2}\big)+\int_Q|\nabla\vartheta^{\h}|^2dtdx
\\ \leq&
\displaystyle\int_{Q}\Delta B^{\Delta t}\partial_{t}(\tilde{\chi}^{\h}-\tilde{B}^{\h})dtdx.
\end{align*}

Noticing that $\di\nabla \tilde{U}^{\Delta t}(T)=\nabla U_{N}$ and that $\tilde\vartheta^{\h}(T)=\vartheta_N$, we finally get after taking the expectation
\begin{align}
&E\l[\int_Q\big|\partial_t(\tilde{\chi}^{\h}-\tilde{B}^{\h})\big|^2dtdx\r]+ \frac{1}{2}E\l[||\nabla \tilde{U}^{\h}(T)||^2\r]\nonumber\\
&+ \frac{1}{2}E\l[|| \tilde{\vartheta}^{\h}(T)||^2\r]+E\l[\int_Q|\nabla\vartheta^{\h}|^2dtdx\r]\nonumber\\
&+E\l[\int_{Q} \ae\left(\partial_t(\tilde{\chi}^{\h}-\tilde{B}^{\h})\right)\partial_t(\tilde{\chi}^{\h}-\tilde{B}^{\h})dtdx\r]\nonumber\\
 \leq&\ E\l[\int_{Q}\Delta B^{\h} \partial_t(\tilde{\chi}^{\h}-\tilde{B}^{\h}) dtdx\r]+\frac{1}{2} E\l[||\nabla U_0||^2 \r]+\frac{1}{2} E\l[||\vartheta_0||^2 \r].\label{carrec1} 
\end{align}

Now, passing to the superior limit in (\ref{carrec1}), we have using Proposition \ref{CVuc1} and Proposition \ref{CVuc1bis}
\begin{align*}
&\di\lim\inf_{\h\rightarrow 0}\|\partial_t(\tilde{\chi}^{\h}-\tilde{B}^{\h})\|^2_{L^2(\Omega\times Q)}+\di\frac{1}{2}\lim\inf_{\h\rightarrow 0}E\l[\|\nabla\tilde{ U}^{\h}(T)\|^{2}\r] \nonumber \\
&+\di\frac{1}{2}\lim\inf_{\h\rightarrow 0}E\l[\|\tilde{ \vartheta}^{\h}(T)\|^{2}\r] +\lim\inf_{\h\rightarrow 0}\|\nabla\vartheta^{\h}\|^{2}_{L^2(\Omega\times Q)}\nonumber \\
&+\lim\sup_{\h\rightarrow 0}E\di\l[\int_{Q}\ae(\partial_{t}(\tilde{\chi}^{\h}-\tilde{B}^{\h}))\partial_{t}(\tilde{\chi}^{\h}-\tilde{B}^{\h})dtdx\r]\nonumber\\
\leq&\ \di E\l[\int_{Q}\int_{0}^{t}\Delta hdw\partial_{t}Udtdx\r]+\frac{1}{2} E\l[||\nabla \chi_0||^2 \r]+\frac{1}{2} E\l[||\vartheta_0||^2 \r]. %\label{trianglec1}
\end{align*}
Indeed, due to Remark \ref{rcvh}, $\di B^{\h}$ converges strongly in
 $L^{2}\l((0,T)\times \Omega, H^{2}(D)\r)$  to $\int_{0}^{.}hdw$ and so, by continuity of the Laplace operator, $\Delta B^{\h}$ converges strongly in $L^{2}(\Omega\times Q)$ to $\Delta \int_{0}^{.}hdw$. Moreover, following \cite{PrevotRockner} (Lemma 2.4.1 p.35),  $$\Delta \int_{0}^{.}hdw= \int_{0}^{.}\Delta hdw,$$
 and the convergence holds. Note that thanks to the embedding (see \cite{Lions} Lemme 8.1 p.297):
$$L^{\infty}(0,T; L^2(\Omega, H^1(D))\cap \mathscr{C}\hspace*{-0.05cm}\l([0,T], L^{2}(\Omega, L^2(D))\r)\hspace*{-0.07cm}\subset\hspace*{-0.07cm} \mathscr{C}_w\hspace*{-0.05cm}\l([0,T], L^{2}(\Omega, H^1(D))\r)\footnotemark[3]$$
\footnotetext[3]{$\mathscr{C}_w\l([0,T], L^{2}(\Omega, H^1(D))\r)$ denotes the set of functions defined on $[0, T ]$ with values in $L^{2}(\Omega, H^1(D))$ which are weakly continuous.}one gets that for all times $t$ in $[0,T]$, $\tilde{U}^{\h}(t)$ belongs to $L^{2}(\Omega, H^1(D))$ and $\tilde{U}^{\h}(t)\rightharpoonup U(t)$ in $L^{2}(\Omega, H^{1}(D))$. Then owing to the lower semi-continuity of the $L^{2}(\Omega, H^{1}(D))$-norm
\begin{eqnarray*}\liminf_{\h\rightarrow 0}E\l[\|\nabla\tilde{ U}^{\h}(T)\|^{2}\r]\geqslant E\l[||\nabla U(T)||^2\r]. 
\end{eqnarray*}
Using the same arguments on $\tilde\vartheta^{\h}$, one gets also that
\begin{eqnarray*}\liminf_{\h\rightarrow 0}E\l[\|\tilde{ \vartheta}^{\h}(T)\|^{2}\r]\geqslant E\l[||\vartheta(T)||^2\r]. 
\end{eqnarray*}
Finally, the lower semi-continuity of the $L^{2}(\Omega\times Q)$-norm gives us
\begin{align}
&\di||\partial _t(\chi-\int_0^.hdw)||^2_{L^2(\Omega\times Q)}+\frac{1}{2}E\l[||\nabla U(T)||^2\r]\nonumber\\
&+\frac{1}{2}E\l[\|\vartheta(T)\|^{2}\r]+||\nabla\vartheta||^2_{L^2(\Omega\times Q)}\nonumber\\
&+\limsup_{\h\rightarrow 0}E\di\l[\int_{Q}\ae(\partial_{t}(\tilde{\chi}^{\h}-\tilde{B}^{\h}))\partial_{t}(\tilde{\chi}^{\h}-\tilde{B}^{\h})dtdx\r]\nonumber\\
\leq&\ \di E\l[\int_{Q}\int_{0}^{t}\Delta hdw\partial_{t}Udtdx\r]+\frac{1}{2} ||\nabla \chi_0||^2 +\frac{1}{2} ||\vartheta_0||^2. \label{trianglec1}
\end{align}
Note that $P$-almost surely in $\Omega$, $\di U$ satisfies the heat equation
\begin{equation*}
\left\{\begin{array}{rll}
\partial_{t}U-\Delta U&=&g, \label{eqchal1c1}\\
U(0,.)&=&\chi_{0},
\end{array}\right.
\end{equation*}
where $\di g=\vartheta-\bar{\chi}+\int_{0}^{t}\Delta hdw$. Since $\chi_0\in H^1(D)$, the following energy equality holds  for any $t\in[0,T]$ (see \cite{Brezisbis} Theorem X.11 p.220):
\begin{align*}
&\int_{ Q_{t}}|\partial_{t}U|^2dsdx + \int_{ Q_{t}}\bar{\chi}\partial_{t}Udsdx + \frac{1}{2}||\nabla U(t)||^2 \\
=& \ \int_{Q_{t}}\int_0^s\Delta h dw \partial_tU dsdx +\int_{ Q_{t}}\vartheta\partial_{t}Udsdx+ \frac{1}{2} ||\nabla \chi_{0}||^2,
\end{align*}
where $Q_{t}=(0,t)\times D$.
Then, by taking the expectation:
\begin{align}\label{ContF}
&E\left[\int_{ Q_{t}}|\partial_{t}U|^2dsdx \right]+E\left[\int_{ Q_{t}}\bar{\chi}\partial_{t}Udsdx\right] + \frac{1}{2}E\left[||\nabla U(t)||^2\right]\\
 =& \ E\left[\int_{Q_{t}}\int_0^s\Delta h dw \partial_tU dsdx \right]+E\left[\int_{ Q_{t}}\vartheta\partial_{t}Udsdx\right]+ \frac{1}{2} ||\nabla \chi_{0}||^2.\nonumber
\end{align}
In the same manner, note that $\vartheta$ satisfies $P$-almost surely in $\Omega$ the  heat equation
%$$\partial_t\vartheta-\Delta \vartheta=-\partial_t(\chi-\int_0^.hdw)$$
\begin{equation*}
\left\{\begin{array}{rll}
\partial_t\vartheta-\Delta \vartheta&=&-\partial_tU, \\
\vartheta(0,.)&=&\vartheta_{0}\in H^1(D),
\end{array}\right.
\end{equation*}
and so the following energy equality holds for any $t\in [0,T]$ (see \cite{Temam95} Lemma 1.2 p260) :
\begin{align}\label{energyv}
&\frac{1}{2}E\l[||\vartheta(t)||^2\r]+||\nabla\vartheta||^2_{L^2(\Omega\times Q_t)}+E\l[\int_{Q_t}\vartheta\partial_tU dsdx\r]=\frac{1}{2}||\vartheta(0)||^2.
\end{align}
In this way, by injecting (\ref{ContF})-(\ref{energyv}) written with $t=T$ in  (\ref{trianglec1}) we finally have
\begin{align*}
&\lim\sup_{\h\rightarrow 0}E\l[\int_{Q} \ae\left(\partial_t(\tilde{\chi}^{\h}-\tilde{B}^{\h})\right)\partial_t(\tilde{\chi}^{\h}-\tilde{B}^{\h})dtdx\r] \\
\leq&\ E\l[\int_{Q}\bar{\chi}\partial_t\big(\chi-\int_{0}^{t}hdw\big)dtdx\r].
\end{align*}
%Using a result on accretive operators given by H. Brezis \cite{ Brezis} (proposition 2.5 p.27) with $A=\{(x,f(x)), x\in L^{2}(\Omega\times Q) \}$, as $f$ is a Lipschitz increasing function,
As $\ae:\mathbb{R}\rightarrow \mathbb{R}$ is a Lipschitz continuous, non-decreasing function, the operator 
\begin{eqnarray*}
A_{\ae}: L^{2}(\Omega\times Q)&\rightarrow&  L^{2}(\Omega\times Q)\\
u&\mapsto&\ae(u),
\end{eqnarray*} 
is maximal monotone and one gets that $\di\bar{\chi}=\ae\Big(\partial_{t}(\chi-\int_{0}^{.}hdw)\Big)$ (see {\sc Lions} \cite{Lions} p.172).
\end{proof}

\begin{prop}\label{CoGr}Assume that $h$ belongs to $\di\mathcal{N}^2_w(0,T,H^{2}(D))$. Then, the following results hold
\begin{enumerate}
\item[($i$)] The application $t\in[0,T]\mapsto E\l[\|\nabla \chi(t)\|^{2}\r]\in \R$ is continuous.
\item[($ii$)] $\chi$ belongs to the space $\mathscr{C}\l([0,T], L^{2}(\Omega, H^1(D))\r)$.
\end{enumerate}
\end{prop}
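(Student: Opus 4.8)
The plan is to transfer the argument of Proposition \ref{CoGr1} to the auxiliary process $U:=\chi-\int_0^\cdot h\,dw$ and then to restore the stochastic integral, which is a continuous $H^1(D)$-valued martingale. The point is that $\chi$ itself does not solve a deterministic heat equation, because of the noise, whereas by Proposition \ref{energiec1} the shifted process $U$ does: $P$-almost surely in $\Omega$ one has $\partial_t U-\Delta U=g$ with $g=\vartheta-\bar\chi+\int_0^\cdot \Delta h\,dw$ and $U(0)=\chi_0\in H^1(D)$.

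First I would prove that $t\mapsto E[\|\nabla U(t)\|^2]$ is continuous and that $U\in\mathscr{C}([0,T],L^2(\Omega,H^1(D)))$, by repeating verbatim the proof of Proposition \ref{CoGr1} with $U$ in place of $\vartheta$. Indeed, Proposition \ref{CVuc1bis}$(iii)$ gives $U\in L^2(\Omega,H^1(Q))$, so the energy equality (\ref{ContF}) applies, and the continuity in $t$ of the Lebesgue integrals over $Q_t$ yields the continuity of $t\mapsto E[\|\nabla U(t)\|^2]$; since moreover $U\in\mathscr{C}([0,T],L^2(\Omega,L^2(D)))$, the map $t\mapsto E[\|U(t)\|_{H^1(D)}^2]$ is continuous. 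The embedding of Lions (Lemme 8.1 p.297), whose hypotheses hold because the bounds of Propositions \ref{fae}--\ref{fbae} pass to the limit to give $U\in L^\infty(0,T;L^2(\Omega,H^1(D)))$, then provides weak continuity of $t\mapsto U(t)$ into $L^2(\Omega,H^1(D))$; combined with continuity of the norm this upgrades to strong continuity.

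It then remains to treat $\int_0^\cdot h\,dw$. As the It\^o integral of the $\mathcal{N}^2_w(0,T,H^1(D))$-process $h$, it is a continuous square-integrable $H^1(D)$-valued martingale (Da Prato--Zabczyk \cite{DaPratoZabczyk}), and the It\^o isometry gives $E[\|\int_s^t h\,dw\|_{H^1(D)}^2]=E[\int_s^t \|h\|_{H^1(D)}^2\,dr]\to 0$ as $s\to t$, so $\int_0^\cdot h\,dw\in\mathscr{C}([0,T],L^2(\Omega,H^1(D)))$. Writing $\chi=U+\int_0^\cdot h\,dw$ as a sum of two elements of $\mathscr{C}([0,T],L^2(\Omega,H^1(D)))$ yields $(ii)$; and since the gradient is a bounded operator from $H^1(D)$ to $(L^2(D))^d$, the curve $t\mapsto\nabla\chi(t)$ is continuous into $L^2(\Omega,(L^2(D))^d)$, which gives the continuity of $t\mapsto E[\|\nabla\chi(t)\|^2]$ claimed in $(i)$. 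The one genuinely non-routine point is that the energy equality (\ref{ContF}) is only available because $g$ contains the term $\int_0^\cdot \Delta h\,dw$, which lies in $L^2(\Omega\times Q)$ precisely thanks to the standing hypothesis $h\in\mathcal{N}^2_w(0,T,H^2(D))$ (cf. Remark \ref{rcvh} and Proposition \ref{energiec1}); this is exactly why the proposition is stated under that stronger assumption, everything else being a direct adaptation of the $\vartheta$-case.
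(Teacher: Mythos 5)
Your proof is correct and follows essentially the same route as the paper: both work with the shifted process $U=\chi-\int_0^{\cdot}h\,dw$, obtain continuity of $t\mapsto E\left[\|\nabla U(t)\|^{2}\right]$ from the energy equality (\ref{ContF}) together with Lebesgue's theorem, upgrade the weak continuity of $U$ (via the Lions embedding already invoked in Proposition \ref{energiec1}) to strong continuity in $\mathscr{C}\left([0,T],L^{2}(\Omega,H^1(D))\right)$, and then restore the It\^o integral using its regularity as a continuous square-integrable $H^1(D)$-valued martingale. The only, harmless, difference is organizational: you deduce $(i)$ from $(ii)$ through the boundedness of the gradient operator, whereas the paper argues $(i)$ directly by combining the norm continuity of $\nabla U$ with that of $\int_0^{\cdot}\nabla h\,dw$.
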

\begin{proof} ($i$) Using (\ref{ContF}) and Lebesgue's theorem, one shows the continuity of $$t \in [0,T]\mapsto E\left[\|\nabla U(t)\|^{2}\right]\in\R,$$
where $\di U=\chi-\int_{0}^{.}hdw.$ Moreover, since $\di\int_{0}^{.}hdw$ is in $\mathscr{C}\l([0,T], L^{2}(\Omega, H^1(D))\r)$, one gets that the application $$t\in[0,T]\mapsto E\l[\|\int_0^t\nabla h(s)dw(s)\|^{2}\r]\in\R$$ is continuous and  the announced result holds.\\
($ii$) Due to (\ref{ContF}), Lebesgue's theorem and the fact that $\di U=\chi-\int_0^.hdw$ is an element of $\mathscr{C}\l([0,T], L^{2}(\Omega, L^2(D))\r)$, one gets the continuity of the application  $$t \in [0,T]\mapsto E\left[\| U(t)\|^{2}_{H^1(D)}\right]\in\R.$$
Note that as mentioned in the proof of Proposition \ref{energiec1}, $U$ belongs to the space $\mathscr{C}_w\l([0,T], L^{2}(\Omega, H^1(D))\r)$. Combining this with the above continuity result, one concludes that $U$ is in $\mathscr{C}\l([0,T], L^{2}(\Omega, H^1(D))\r)$ and due to the regularity of It\^o integral, it is the same for $\chi$.
\end{proof}

\begin{prop}\label{erhh2}
Assume that $h$ belongs to $\mathcal{N}^{2}_{w}(0,T,H^{2}(D))$. Then the couple $(\vartheta,\chi)$ given by Proposition \ref{CVuc1} and Proposition \ref{CVuc1bis} is a solution of System (\ref{eqc1}) in the sense of Definition \ref{ac1}.
\end{prop}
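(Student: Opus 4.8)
The plan is to verify the three groups of requirements in Definition \ref{ac1} --- functional-space membership, the two variational identities, and the initial conditions --- by collecting the results already obtained. Since the analytic effort has been spent in the preceding propositions, the proof is essentially an assembly.

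First I would dispose of the regularity. Proposition \ref{CVuc1} gives $\vartheta\in\mathcal{N}^2_w(0,T,H^1(D))\cap L^2(\Omega,H^1(Q))$ and Proposition \ref{CVuc1bis} gives $\chi\in\mathcal{N}^2_w(0,T,H^1(D))$, so $(\vartheta,\chi)\in(\mathcal{N}^2_w(0,T,H^1(D)))^2$. The bound (\ref{1c1}), together with point (iii) of Proposition \ref{CVuc1bis}, yields $\partial_t(\chi-\int_0^.hdw)\in L^2(\Omega\times Q)$. All the integrability demanded by Definition \ref{ac1} is thereby accounted for.

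Next I would pass to the limit in the discrete formulation (\ref{dc1})-(\ref{dc1bis}): integrating these identities against an arbitrary bounded predictable scalar process times a fixed $v\in H^1(D)$, using the weak convergences $\partial_t\tilde\vartheta^{\h}\rightharpoonup\partial_t\vartheta$, $\nabla\vartheta^{\h}\rightharpoonup\nabla\vartheta$, $\nabla\chi^{\h}\rightharpoonup\nabla\chi$, $\partial_t(\tilde\chi^{\h}-\tilde B^{\h})\rightharpoonup\partial_t(\chi-\int_0^.hdw)$ and $\ae(\partial_t(\tilde\chi^{\h}-\tilde B^{\h}))\rightharpoonup\bar\chi$ from Propositions \ref{CVuc1}-\ref{CVuc1bis} to let each term pass to its limit, and then letting the scalar factor vary and invoking the separability of $H^1(D)$ to secure a common null set, one recovers $t$-a.e. in $(0,T)$ and $P$-a.s.\ the two limit identities already displayed: the first is exactly (\ref{dv1}), while the second reads $\int_D\partial_t(\chi-\int_0^.hdw)vdx+\int_D\bar\chi vdx+\int_D\nabla\chi.\nabla vdx=\int_D\vartheta vdx$.

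The decisive point is then to replace $\bar\chi$ by its value. Here the hypothesis $h\in\mathcal{N}^2_w(0,T,H^2(D))$ enters, precisely so that Proposition \ref{energiec1} applies and gives $\bar\chi=\ae(\partial_t(\chi-\int_0^.hdw))$ in $L^2(\Omega\times Q)$; this maximal-monotone identification is the only genuinely nontrivial ingredient, and since it is already established I would merely cite it here. Using $\tilde{\ae}=I_d+\ae$ from assumption $H_2$, the second identity becomes $\int_D\tilde{\ae}(\partial_t(\chi-\int_0^.hdw))vdx+\int_D\nabla\chi.\nabla vdx=\int_D\vartheta vdx$, i.e.\ (\ref{dv2}). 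Finally, Proposition \ref{initc} provides $\vartheta(0)=\vartheta_0$ and $\chi(0)=\chi_0$ in $L^2(D)$, which --- thanks to the continuity in time furnished by Propositions \ref{CVuc1} and \ref{CVuc1bis} --- are read in the sense of Remark \ref{cid}. All items of Definition \ref{ac1} being verified, the couple $(\vartheta,\chi)$ is a solution when $h\in\mathcal{N}^2_w(0,T,H^2(D))$.
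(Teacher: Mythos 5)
Your proposal is correct and follows essentially the same route as the paper's own proof: both assemble the regularity from Propositions \ref{CVuc1} and \ref{CVuc1bis}, the initial conditions from Proposition \ref{initc}, the limit passage in (\ref{dc1})--(\ref{dc1bis}) using the weak convergences and the separability of $H^1(D)$, and the identification $\bar\chi=\ae\big(\partial_t(\chi-\int_0^.hdw)\big)$ from Proposition \ref{energiec1}, which is exactly where the hypothesis $h\in\mathcal{N}^2_w(0,T,H^2(D))$ is used. Your added detail on testing against bounded predictable scalar processes, and the explicit recombination via $\tilde{\ae}=I_d+\ae$, merely spell out steps the paper leaves implicit.
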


\begin{proof} Firstly, note that using Proposition \ref{CVuc1}, Proposition \ref{CVuc1bis}, Proposition \ref{CoGr1} and  Proposition \ref{CoGr}, $\vartheta$ and $\chi$ own regularities required by Definition \ref{ac1}. Secondly, they satisfy respectively the initial conditions $\vartheta(0,.)=\vartheta_0$ and $\chi(0,.)=\chi_{0}$ in $L^2(D)$ owing to Proposition \ref{initc}. Thirdly, thanks to Proposition \ref{energiec1}, by passing to the limit in (\ref{dc1})-(\ref{dc1bis}) with respect to $\h$ and  using the separability of $H^1(D)$, one gets $t$-almost everywhere in $(0,T)$, $P$-almost surely in $\Omega$ and for any $v$ in $H^1(D)$
\begin{equation*}
\left\{\begin{array}{rcl}
\di\int_{D}\partial_t\vartheta vdx+\int_D\partial_t\big(\chi-\int_0^.hdw\big)vdx+\int_D\nabla \vartheta.\nabla vdx=0&&\\
\di\int_D\tilde{\ae}\Big(\partial_t\big(\chi-\int_0^.hdw\big)\Big)vdx+\int_D\nabla \chi.\nabla vdx=\di\int_D\vartheta vdx.&&
\end{array}\right.
\end{equation*}
Hence, $(\vartheta,\chi)$ is a solution of  (\ref{eqc1}) in the sense of Definition \ref{ac1}
\end{proof}
 We now have all the necessary tools to show the result of existence and uniqueness for Problem (\ref{eqc1}) stated in Theorem \ref{EUc1}.
 
\subsection{Existence result for (\ref{eqc1}) when $h\in\mathcal{N}^{2}_{w}(0,T,H^1(D))$}\label{step2}

Assume that $h$ belongs to $\mathcal{N}^{2}_{w}(0,T,H^1(D))$. Owing to the density of $\mathscr{C}^{\infty}_c(\overline{D})$ in $H^1(D)$, we propose to approach $h$ by a sequence $(h_{n})_{n\in \N}\subset \mathcal{N}^{2}_{w}(0,T,\mathscr{C}^{\infty}_c(\overline{D})).$ Using Proposition \ref{erhh2}, one is able to define the following sequences :
\begin{defn} \label{solapp}Set $n,m\in\N$ and consider $\vartheta_{0}$, $\chi_{0}$ in $H^1(D)$  and $h_{n}, h_m$ belonging to $\mathcal{N}^{2}_{w}(0,T,\mathscr{C}^{\infty}_c(\overline{D}))$. We define the couples $(\vartheta_{n},\chi_{n})$ and $(\vartheta_{m},\chi_{m})$ as solutions of Problem (\ref{eqc1}) in the sense of Definition \ref{ac1} with the respective sets of data $(\vartheta_{0},\chi_{0},h_n)$ and $(\vartheta_{0},\chi_{0}, h_m)$. 
\end{defn}
For any $n,m\in \N$, we introduce the notations $$\di U_{n}=\chi_{n}-\int_{0}^{.}h_{n}dw\text{ and }\di U_{m}=\chi_{m}-\int_{0}^{.}h_{m}dw.$$ In what follows, our aim is to show  that $(\vartheta_n)_{n\in \N},(\chi_n)_{n\in \N}$ and $(U_n)_{n\in \N}$ are Cauchy sequences in suitable spaces.

\begin{lem} \label{lemcauchy}
The sequences $(\vartheta_n)_{n\in \N},(\chi_n)_{n\in \N}$ and $(U_n)_{n\in \N}$ introduced in Definition \ref{solapp} satisfy the following properties
\begin{enumerate}
\item[($i$)] $(\vartheta_n)_{n\in \N}$ is a  Cauchy sequence in $\mathcal{N}^{2}_{w}(0,T,H^{1}(D))\cap L^2(\Omega, H^1(Q))$.
\item[($ii$)] $(\chi_n)_{n\in \N}$ is a  Cauchy sequence in $\mathcal{N}^{2}_{w}(0,T,H^{1}(D))$.
\item[($iii$)] $(U_n)_{n\in \N}$ is a Cauchy sequence in $L^{2}\big(\Omega, H^1(Q)\big)$.
\item[($iv$)] For any $t$ in $[0,T]$, $(\vartheta_n(t))_{n\in \N}$ and $(\chi_n(t))_{n\in \N}$ are  Cauchy sequences in $L^2(\Omega, H^1(D))$.
\end{enumerate}
\end{lem}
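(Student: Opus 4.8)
The plan is to deduce every statement from the continuous dependence estimate of Proposition \ref{contdep}, upgraded by two energy identities for the differences. First I would record that, since $h_n$ and $h_m$ lie in $\mathcal{N}^2_w(0,T,\mathscr{C}^\infty_c(\overline{D}))\subset\mathcal{N}^2_w(0,T,H^2(D))$, Proposition \ref{erhh2} guarantees that $(\vartheta_n,\chi_n)$ and $(\vartheta_m,\chi_m)$ are genuine solutions in the sense of Definition \ref{ac1}; in particular the estimate of Proposition \ref{contdep} applies to this pair. Writing $\bar\vartheta=\vartheta_n-\vartheta_m$, $\bar\chi=\chi_n-\chi_m$, $\bar h=h_n-h_m$ and $\bar U=U_n-U_m=\bar\chi-\int_0^.\bar h\,dw$, these differences satisfy the difference system with vanishing initial data.

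For $(ii)$, $(iv)$ and the $\S$--part of $(i)$, I would simply invoke Proposition \ref{contdep}. Bounding $Q_t\subseteq Q$, its right-hand side is controlled by $C_\alpha^T\|\bar h\|^2_{\mathcal{N}^2_w(0,T,H^1(D))}$, uniformly in $t\in[0,T]$. Since $(h_n)$ converges to $h$ in $\mathcal{N}^2_w(0,T,H^1(D))$ it is in particular a Cauchy sequence there, so this bound tends to $0$ as $n,m\to\infty$. Read at fixed $t$ this gives $(iv)$; integrated over $(0,T)$ it gives $(ii)$ and the convergence of $\bar\vartheta$ in $\mathcal{N}^2_w(0,T,H^1(D))$, i.e. the first half of $(i)$.

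It then remains to control the time derivatives, i.e. to complete $(i)$ and to obtain $(iii)$ in $L^2(\Omega,H^1(Q))$. I would use two energy identities. The heat equation satisfied by $\bar\vartheta$ has source $-\partial_t\bar U$ and vanishing initial datum, and testing it with $\partial_t\bar\vartheta$ controls $\|\partial_t\bar\vartheta\|_{L^2(\Omega\times Q)}$ by $\|\partial_t\bar U\|_{L^2(\Omega\times Q)}$ and provides control of $\E[\|\nabla\bar\vartheta(t)\|^2]$. To close the loop I would test the difference of the two Barenblatt equations against $\partial_t\bar U$ and use the coercivity constant $\bar{C}_{\tilde{\ae}}$ of $\tilde\ae$, producing $\bar{C}_{\tilde{\ae}}\|\partial_t\bar U\|^2_{L^2(\Omega\times Q)}$ on the left against $\int\bar\vartheta\,\partial_t\bar U$ and the gradient term $\int\nabla\bar\chi\cdot\partial_t\nabla\bar U$ on the right. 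Decomposing $\nabla\bar\chi=\nabla\bar U+\int_0^.\nabla\bar h\,dw$, the first contribution telescopes into $\tfrac12\E[\|\nabla\bar U(t)\|^2]$, and everything else is absorbed by Young's inequality and closed by Gronwall's lemma, all right-hand sides being controlled by the quantities already shown to be Cauchy together with $\|\bar h\|_{\mathcal{N}^2_w(0,T,H^1(D))}$.

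The main obstacle is precisely this last gradient term, and specifically the mixed contribution $\E\int_0^t\int_D\big(\int_0^s\nabla\bar h\,dw\big)\cdot\partial_s\nabla\bar U\,dx\,ds$. A naive spatial integration by parts would turn it into a term involving $\Delta\bar h$, which cannot be bounded by the $H^1(D)$--norm of $\bar h$ and would ruin the argument, since the regularization only converges in $\mathcal{N}^2_w(0,T,H^1(D))$. The point is instead to integrate by parts in time in the It\^o sense: writing $\nabla\bar B=\int_0^.\nabla\bar h\,dw$, an $H^1(D)$--valued martingale, and using that for the smooth approximations $s\mapsto\nabla\bar U(s)$ has finite variation, the cross-variation vanishes and the martingale term has zero expectation, so that $\E\int_0^t\langle\nabla\bar B,\partial_s\nabla\bar U\rangle\,ds=\E\,\langle\nabla\bar B(t),\nabla\bar U(t)\rangle$. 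This quantity is estimated by Young's inequality, with $\E[\|\nabla\bar B(t)\|^2]=\E\int_0^t\|\nabla\bar h\|^2\,ds$ by the It\^o isometry, involving only the $H^1(D)$--norm of $\bar h$. This is the one genuinely stochastic step, and it is the reason the $H^2$--regularity invoked in Proposition \ref{energiec1} can be dispensed with at this stage.
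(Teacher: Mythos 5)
Your proposal has a genuine structural flaw: it is circular. In the paper's architecture, Proposition \ref{contdep} is proved \emph{after} Lemma \ref{lemcauchy} and depends on it twice over: the quantitative estimate behind it (Lemma \ref{lemdep1}) is extracted directly from inequalities established \emph{inside} the proof of Lemma \ref{lemcauchy}, and the passage from smooth integrands to general $h,\hat h\in\mathcal{N}^2_w(0,T,H^1(D))$ uses the convergence results of Corollary \ref{191011_cor1} and the uniqueness theorem (Theorem \ref{191011_thm01}), both of which sit downstream of this lemma. Indeed, before Lemma \ref{lemcauchy} is proved, solutions are only known to exist for $H^2$-integrands (Proposition \ref{erhh2}), so Proposition \ref{contdep} is not yet a usable tool at all. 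Invoking it to obtain $(ii)$, $(iv)$ and the $\mathcal{N}^2_w(0,T,H^1(D))$-part of $(i)$ therefore assumes what is to be proven; and since your treatment of the time derivatives closes the Gr\"onwall loop using ``the quantities already shown to be Cauchy'', the circularity contaminates $(i)$ and $(iii)$ as well. The repair is exactly the paper's proof: run the full energy argument from scratch for the pair of smooth-data solutions --- subtract the two variational formulations, test the first with $\vartheta_n-\vartheta_m$ and with a time-difference quotient of it, test the second with $U_n-U_m$ and with a time-difference quotient of it, use the coercivity of $\alpha$, combine, and apply Gr\"onwall. This yields all the bounds at once, including the $\sup_t$ bounds you wanted to import; the continuous dependence estimate is then a \emph{corollary} of this lemma, not an ingredient of it.

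There is also a regularity gap in your one genuinely stochastic step. The It\^o product rule you invoke requires that $s\mapsto\nabla\bar U(s)$ have finite variation, i.e.\ that $\partial_t\nabla\bar U$ exist as an integrable process; but even for the smooth-data solutions of Proposition \ref{erhh2} the available regularity is only $U_n\in L^2(\Omega,H^1(Q))$, $\nabla U_n\in L^\infty(0,T;L^2(\Omega\times D))$ and continuity of $t\mapsto E[\|\nabla U_n(t)\|^2]$ (Proposition \ref{CoGr}); nothing gives $\partial_t\nabla U_n$, so neither the term $E\int_{Q_t}\big(\int_0^s\nabla\bar h\,dw\big)\cdot\partial_s\nabla\bar U\,dx\,ds$ nor the product rule applied to it is defined. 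The same objection hits your test functions: $\partial_t\bar\vartheta$ and $\partial_t\bar U$ are only $L^2(\Omega\times Q)$-functions, not $H^1(D)$-valued, hence not admissible in the variational formulation. The paper resolves both problems at once by testing with backward difference quotients such as $v=[(U_n-U_m)(t)-(U_n-U_m)(t-\Delta t)]/\Delta t$, which \emph{are} admissible, and by handling the stochastic cross term at the discrete level: the part of $\nabla(\chi_n-\chi_m)$ evaluated at $t-\Delta t$ pairs with $\int_{t-\Delta t}^t\nabla(h_n-h_m)dw$ to give zero expectation by $\mathcal{F}_{t-\Delta t}$-measurability, and the increment part is absorbed by Young's inequality together with the It\^o isometry, before letting $\Delta t\to0$ with the help of Propositions \ref{CoGr1} and \ref{CoGr}. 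That discrete martingale argument is the rigorous counterpart of your product-rule computation --- and you are right that this is precisely what allows $\Delta(h_n-h_m)$ to be avoided so that only $\|\nabla(h_n-h_m)\|_{L^2(\Omega\times Q)}$ appears --- but as formulated your step rests on regularity that the solutions do not possess.
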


\begin{proof}
Since the couples $(\vartheta_n,\chi_n)$ and $(\vartheta_m,\chi_m)$ satisfy  the variational formulation (\ref{dv1}) respectively with $h_n$ and $h_m$, one gets by subtracting them, $t$-almost everywhere in $(0,T)$, $P$-almost surely in $\Omega$ and for any $v$ in $H^1(D)$ that
\begin{align}\label{eqcauchy}
&\di\int_D\partial_t(\vartheta_n-\vartheta_m)vdx+\int_{D}\partial_{t}(U_{n}-U_{m})vdx\nonumber\\
&+\int_D\nabla (\vartheta_{n}-\vartheta_{m}).\nabla vdx=0.
\end{align}
For a fixed $t$ in $[0,T]$, we consider in (\ref{eqcauchy}) the test function $$\di v=\frac{(\vartheta_{n}-\vartheta_{m})(t)-(\vartheta_{n}-\vartheta_{m})(t-\h)}{\h}.$$
By noticing that 
\begin{align*}
&\di \int_D\nabla(\vartheta_{n}(t)- \vartheta_{m}(t)).\nabla\Big( \vartheta_{n}(t)- \vartheta_{m}(t) -\big( \vartheta_{n}(t-\h)- \vartheta_{m}(t-\h)\big)\Big)dx\smallskip\\
=&\ \di\frac{1}{2}\Big[\|\nabla(\vartheta_{n}-\vartheta_{m})(t)\|^{2}-\|\nabla(\vartheta_{n}-\vartheta_{m})(t-\h)\|^{2}\Big]\\
&+\frac{1}{2}\di\|\nabla(\vartheta_{n}-\vartheta_{m})(t)-\nabla(\vartheta_{n}-\vartheta_{m})(t-\h)\|^{2},
 \end{align*}
 we thus obtain 
\begin{align*}
&\di \int_D  \partial_t(\vartheta_n-\vartheta_m)\frac{(\vartheta_{n}- \vartheta_{m})(t)-(\vartheta_{n}- \vartheta_{m})(t-\h)}{\h}dx\smallskip\\
&+\di \int_D  \partial_t(U_n-U_m)\frac{(\vartheta_{n}- \vartheta_{m})(t)-(\vartheta_{n}- \vartheta_{m})(t-\h)}{\h}dx\smallskip\\
&\frac{1}{2\h}\Big[||\nabla (\vartheta_{n}- \vartheta_{m})(t)||^2 - ||\nabla (\vartheta_{n}-\vartheta_{m})(t-\h)||^2\Big] \\
+&\frac{1}{2\h}||\nabla (\vartheta_{n}- \vartheta_{m})(t)-\nabla(\vartheta_{n}- \vartheta_{m})(t-\h)||^2=0. 
\end{align*}
By taking the expectation and  the integral from $\h$ to $T$,  
one gets
\begin{align*}
&\di \int_{\h}^TE\left[\int_D  \partial_t(\vartheta_n-\vartheta_m)\frac{(\vartheta_{n}- \vartheta_{m})(t)-(\vartheta_{n}- \vartheta_{m})(t-\h)}{\h}dx\right]dt\\
+&\di \int_{\h}^TE\left[\int_D  \partial_t(U_n-U_m)\frac{(\vartheta_{n}- \vartheta_{m})(t)-(\vartheta_{n}- \vartheta_{m})(t-\h)}{\h}dx\right]dt\\
+& \frac{1}{2\h}\int_{\h}^T E\Big[||\nabla (\vartheta_{n}- \vartheta_{m})(t)||^2 - ||\nabla (\vartheta_{n}- \vartheta_{m})(t-\h)||^2\Big] dt
\\
+& \frac{1}{2\h}\int_{\h}^T E\big[||\nabla (\vartheta_{n}- \vartheta_{m})(t) - \nabla (\vartheta_{n}- \vartheta_{m})(t-\h)||^2\big] dt=0.
\end{align*}
Then, a change of variables gives us
\begin{align*}
&\di \int_{\h}^TE\left[\int_D  \partial_t(\vartheta_n-\vartheta_m)\frac{(\vartheta_{n}- \vartheta_{m})(t)-(\vartheta_{n}- \vartheta_{m})(t-\h)}{\h}dx\right]dt\\
+&\di \int_{\h}^TE\left[\int_D  \partial_t(U_n-U_m)\frac{(\vartheta_{n}- \vartheta_{m})(t)-(\vartheta_{n}- \vartheta_{m})(t-\h)}{\h}dx\right]dt\\
+& \frac{1}{2\h}\int_{T-\h}^T E\left[||\nabla (\vartheta_{n}- \vartheta_{m})(t)||^2\right]dt\leq \frac{1}{2\h}\int_{0}^{\h}E\left[||\nabla (\vartheta_{n}- \vartheta_{m})(t)||^2 \right]dt.
%+& \frac{1}{2\h}\int_{\h}^T E\left[||\nabla (\vartheta_{n}- \vartheta_{m})(t) - \nabla (\vartheta_{n}- \vartheta_{m})(t-\h)||^2\right] dt=0
\end{align*}
By using Proposition \ref{CoGr1}, we obtain by  passing  to the limit with $\h$ and using the initial values:
\begin{align*}
&\di E\left[\int_Q | \partial_t(\vartheta_n-\vartheta_m)|^2dxdt\right]+ E\int_Q \partial_t(U_n-U_m)\partial_t(\vartheta_{n}- \vartheta_{m})dx\\
+&\frac{1}{2}E\left[||\nabla (\vartheta_{n}- \vartheta_{m})(T)||^2\right]\leq \frac{1}{2}E\left[||\nabla (\vartheta_{n}- \vartheta_{m})(0)||^2\right]=0.
\end{align*}
By denoting $Q_{t}=(0,t)\times D$, using the identity $ab=\frac{1}{2}[(a+b)^2-a^2-b^2]$ in the second term of the above equation and discarding nonnegative terms one  has (since $T$ is arbitrary) for any $t\in [0,T]$ 
\begin{align}\label{cde1}
&\di  || \partial_t(\vartheta_n-\vartheta_m)||^2_{L^2(\Omega\times Q_t)} +E\left[||\nabla (\vartheta_{n}- \vartheta_{m})(t)||^2\right]\\
\leq&\ ||\partial_t(U_n-U_m)||^2_{L^2(\Omega\times Q_t)}\nonumber.
 \end{align}
In the same manner, using the test function $\vartheta_n-\vartheta_m$ in (\ref{eqcauchy}), one shows the following inequality for any $t\in[0,T]$
\begin{align}\label{t2}
&E\left[||(\vartheta_n-\vartheta_m)(t)||^2\right]+2||\nabla (\vartheta_n-\vartheta_m)||^2_{L^2(\Omega\times Q_t)}\nonumber\\
\leq& \ ||\partial_t(U_n-U_m)||^2_{L^2(\Omega\times Q_t)}+||\vartheta_n-\vartheta_m||^2_{L^2(\Omega\times Q_t)}.
 \end{align} 
 Similarly, exploiting the fact that $(\vartheta_n,\chi_n)$ and $(\vartheta_m,\chi_m)$ satisfy  additionally the variational formulation (\ref{dv2}) respectively with $h_n$ and $h_m$, one gets by subtracting them, $t$-almost everywhere in $(0,T)$, $P$-almost surely in $\Omega$ and for any $v$ in $H^1(D)$ that
\begin{align}\label{eqcauchy1}
&\di\int_D\partial_t(U_n-U_m)vdx+\int_{D}\big(\ae(\partial_{t}(U_{n}))-\ae(\partial_{t}(U_{m}))\big)vdx\nonumber\\
&+\int_D\nabla (\chi_{n}-\chi_{m}).\nabla vdx=\int_D (\vartheta_n-\vartheta_m)vdx.
\end{align}
For a fixed $t$ in $[0,T]$, by taking  in (\ref{eqcauchy1}) the test function $$\di v=\frac{(U_{n}-U_{m})(t)-(U_{n}-U_{m})(t-\h)}{\h},$$ we get
\begin{align*}
&\di \int_D  \partial_t(U_n-U_m)\frac{(U_{n}- U_{m})(t)-(U_{n}- U_{m})(t-\h)}{\h}dx\\
&\di+\frac{1}{\h}\hspace*{-0.05cm}\int_D\hspace*{-0.05cm}\nabla(\chi_{n}(t)- \chi_{m}(t)).\nabla\Big( \chi_{n}(t)- \chi_{m}(t) - \big(\chi_{n}(t-\h)- \chi_{m}(t-\h)\big)\Big)dx\\
&-\di \frac{1}{\h}\int_D\nabla(\chi_{n}(t)-\chi_{m}(t)).\nabla\Big( \int_{t-\h}^{t} (h_{n}- h_{m})dw\Big)dx  \\
&+\di\int_{D}\big(\ae(\partial_t U_{n})-\ae(\partial_t U_{m})\big)\frac{(U_{n}- U_{m})(t)-(U_{n}- U_{m})(t-\h)}{\h}dx\\
&=\int_D (\vartheta_n-\vartheta_m)\frac{(U_{n}- U_{m})(t)-(U_{n}- U_{m})(t-\h)}{\h}dx.
\end{align*}
Then, by taking the expectation, the integral from $\h$ to $T$ and  using  changes of variables one arrives at (see \cite{BLM}, Proof of Theorem 1.4 for details)
\begin{align*}
&\di \int_{\h}^TE\left[\int_D  \partial_t(U_n-U_m)\frac{(U_{n}- U_{m})(t)-(U_{n}- U_{m})(t-\h)}{\h}dx\right]dt\\
+& \frac{1}{2\h}\int_{\h}^T E\left[||\nabla (\chi_{n}- \chi_{m})(t)||^2\right]dt - \frac{1}{2\h}\int_{0}^{\h}E\left[||\nabla (\chi_{n}- \chi_{m})(t)||^2 \right]dt\\
+&\int_{\h}^T \hspace*{-0.1cm}E\left[\int_{D}\big(\ae(\partial_tU_{n})-\ae(\partial_tU_{m})\big)\frac{(U_{n}-U_{m})(t)-(U_{n}- U_{m})(t-\h)}{\h}dx\right] dt
\\
\leq &\ ||\nabla(h_{n}- h_{m}) ||^2_{L^2(\Omega\times Q)}\\
&+\int_D (\vartheta_n-\vartheta_m)\frac{(U_{n}- U_{m})(t)-(U_{n}- U_{m})(t-\h)}{\h}dx.
\end{align*}
By passing to the limit with $\h$ in this inequality, using Proposition \ref{CoGr} and the initial value we obtain:
\begin{align*}
&\di E\left[\int_Q | \partial_t(U_n-U_m)|^2dxdt\right]+ 
\frac{1}{2}E\left[||\nabla (\chi_{n}- \chi_{m})(T)||^2\right]\\
+&E\left[\int_{Q}\big(\ae(\partial_tU_{n})-\ae(\partial_t U_{m})\big)\partial_t(U_{n}- U_{m})dxdt\right]\\
\leq &\ ||\nabla(h_{n}- h_{m}) ||^2_{L^2(\Omega\times Q)}+\int_D (\vartheta_n-\vartheta_m)\partial_t(U_{n}- U_{m})dx.
\end{align*}
Then, due to the  coercivity of $\ae$, one also has for any $t\in [0,T]$, by still denoting $Q_{t}=(0,t)\times D$,
\begin{align}\label{cde11}
&\big(\bar{C}_{\alpha} +\frac{1}{2}\big)||\partial_{t}(U_{n}-U_{m})||^{2}_{L^2(\Omega\times Q_t)}+\frac{1}{2}E\left[\|\nabla(\chi_{n}-\chi_{m})(t)\|^{2}\right]\\
\leq&\ ||\nabla(h_{n}- h_{m}) ||^2_{L^2(\Omega\times Q_t)}+\frac{1}{2}||\vartheta_n-\vartheta_m||^2_{L^2(\Omega\times Q_t)}. \nonumber%\label{contc1}
\end{align}
In the same manner, using the test function $U_n-U_m$ in (\ref{eqcauchy1}), one shows the following inequality for any $t\in[0,T]$
\begin{align}\label{t22}
&\frac{1}{2}E\left[||(U_n-U_m)(t)||^2\right]\hspace*{-0.05cm}+\hspace*{-0.05cm}\frac{1}{2}||\nabla (\chi_n-\chi_m)||^2_{L^2(\Omega\times Q_t)}\nonumber\\
\leq&\ C_{\alpha}^2 ||\partial_t(U_n-U_m)||^2_{L^2(\Omega\times Q_t)}+\frac{1}{2}||U_n-U_m||^2_{L^2(\Omega\times Q_t)}\nonumber\\
&+\frac{T}{2}||\nabla(h_n-h_m)||^2_{L^2(\Omega\times Q_t)}+||\vartheta_n-\vartheta_m||^2_{L^2(\Omega\times Q_t)}.
\end{align}
Now, by adding (\ref{t2}) and (\ref{t22}), one gets
\begin{align}\label{t222}
&E\left[||(\vartheta_n-\vartheta_m)(t)||^2\right]\hspace*{-0.05cm}
+\frac{1}{2}E\left[||(U_n-U_m)(t)||^2\right]\hspace*{-0.05cm}\nonumber\\
%+&\hspace*{-0.05cm}\frac{1}{2}||\nabla (\chi_n-\chi_m)||^2_{L^2(\Omega\times Q_t)}
\leq& \ (C_{\alpha}^2+1) ||\partial_t(U_n-U_m)||^2_{L^2(\Omega\times Q_t)}+\frac{T}{2}||\nabla(h_n-h_m)||^2_{L^2(\Omega\times Q_t)}\nonumber\\
&+\frac{1}{2}||U_n-U_m||^2_{L^2(\Omega\times Q_t)}
+2||\vartheta_n-\vartheta_m||^2_{L^2(\Omega\times Q_t)}.
\end{align} 
From (\ref{cde11})
  \begin{align*}
 ||\partial_t(U_n-U_m)||^2_{L^2(\Omega\times Q_t)}\leq&\ \frac{1}{\bar{C}_{\alpha}\hspace*{-0.05cm} +\frac{1}{2}}\bigg\{
||\nabla(h_{n}- h_{m}) ||^2_{L^2(\Omega\times Q_t)}
 \hspace*{-0.05cm} +\hspace*{-0.05cm}\frac{1}{2} ||\vartheta_n-\vartheta_m||^2_{L^2(\Omega\times Q_t)}\hspace*{-0.05cm} \bigg\},
 \end{align*}
substituting it in (\ref{t222}), we obtain
 \begin{align*}
&E\left[||(\vartheta_n-\vartheta_m)(t)||^2\right]\hspace*{-0.05cm}
+\ \frac{1}{2}E\left[||(U_n-U_m)(t)||^2\right]\hspace*{-0.05cm}\nonumber\\
\leq&\ 
\left(\frac{C_{\alpha}^2+1}{\bar{C}_{\alpha}\hspace*{-0.05cm} +\frac{1}{2}}+\frac{T}{2}\right)||\nabla(h_{n}- h_{m}) ||^2_{L^2(\Omega\times Q)}\\
&+\frac{1}{2}||U_n-U_m||^2_{L^2(\Omega\times Q_t)}+\left(\frac{C_{\alpha}^2+1}{2\bar{C}_{\alpha}\hspace*{-0.05cm} +1}+2\right)||\vartheta_n-\vartheta_m||^2_{L^2(\Omega\times Q_t)}.
\end{align*}
By denoting for any $t$ in $[0,T]$ 
\begin{align*}
y(t)=&\ E\left[||(\vartheta_n-\vartheta_m)(t)||^2\right]
+\ \frac{1}{2}E\left[||(U_n-U_m)(t)||^2\right],\\
K^{n,m}_{\alpha}(t)=&
\left(\frac{C_{\alpha}^2+1}{\bar{C}_{\alpha} +\frac{1}{2}}+\frac{T}{2}\right)||\nabla(h_{n}- h_{m}) ||^2_{L^2(\Omega\times Q_t)}\hspace*{-0.07cm}\\
\text{ and } \hat{C_{\ae}}=&\ \frac{C_{\alpha}^2+1}{2\bar{C}_{\alpha} +1}+2,
 \end{align*}
 we have for any $t$ in $[0,T]$
 \begin{align}\label{hgl}
 y(t)\leq&\ K^{n,m}_{\alpha}(t)+\hat{C_{\ae}}\int_0^t y(s)ds\leq\ K^{n,m}_{\alpha}(T)+\hat{C_{\ae}}\int_0^t y(s)ds.
 \end{align}
Firstly, Gr\"onwall's Lemma then asserts that for any $t$ in $[0,T]$
  \begin{align}\label{gluv}
& \ E\left[||(\vartheta_n-\vartheta_m)(t)||^2\right]+\ \frac{1}{2}E\left[||(U_n-U_m)(t)||^2\right]\leq K^{n,m}_{\alpha}(T)e^{\hat{C_{\ae}} t}. 
 \end{align}
Thus, by taking the supremum over $[0,T]$ in (\ref{gluv}), one obtains the estimate
 \begin{align}\label{gluvbis}
& \ \sup_{t\in[0,T]} E\left[||(\vartheta_n-\vartheta_m)(t)||^2\right]+ \sup_{t\in[0,T]} \frac{1}{2}E\left[||(U_n-U_m)(t)||^2\right]\nonumber\\\leq&\ \left(2\hat{C_{\ae}}+\frac{T}{2}\right)e^{\hat{C_{\ae}} T}||\nabla(h_{n}- h_{m}) ||^2_{L^2(\Omega\times Q)}.
 \end{align}
Secondly, using (\ref{gluvbis})  in (\ref{cde11}) \textcolor{black}{allows us to affirm that}
  \begin{align}\label{edugc}
&\big(\bar{C}_{\alpha} +\frac{1}{2}\big)||\partial_{t}(U_{n}-U_{m})||^{2}_{L^2(\Omega\times Q)}+\frac{1}{2}\sup_{t\in [0,T]}E\left[\|\nabla(\chi_{n}-\chi_{m})(t)\|^{2}\right]\nonumber\\
\leq&\ \Big(\big(\hat{C_{\ae}}+\frac{T}{4}\big)Te^{\hat{C_{\ae}} T}+1\Big)||\nabla(h_{n}- h_{m}) ||^2_{L^2(\Omega\times Q)}. 
 \end{align}
 Thirdly, thanks to (\ref{edugc})  in (\ref{cde1}), we obtain
  \begin{align}\label{edvgv}
&\di \ || \partial_t(\vartheta_n-\vartheta_m)||^2_{L^2(\Omega\times Q)} +\sup_{t\in [0,T]}E\left[||\nabla (\vartheta_{n}- \vartheta_{m})(t)||^2\right]\nonumber\\
\leq&\ \ \frac{1}{\bar{C}_{\alpha} +\frac{1}{2}}\Big(\big(\hat{C_{\ae}}+\frac{T}{4}\big)Te^{\hat{C_{\ae}} T}+1\Big)||\nabla(h_{n}- h_{m}) ||^2_{L^2(\Omega\times Q)}. 
 \end{align}
 Finally, since $(h_{n})_{n}$ is a Cauchy sequence in $\mathcal{N}^{2}_{w}(0,T,H^{1}(D))$ and owing to (\ref{gluvbis}), (\ref{edugc}) and (\ref{edvgv}), one concludes that 
\begin{itemize}
\item[($i$)] $(\vartheta_n)_{n\in \N}$ is a  Cauchy sequence in $\mathcal{N}^{2}_{w}(0,T,H^{1}(D))\cap L^2(\Omega, H^1(Q))$.
\item[($ii$)] $(\chi_n)_{n\in \N}$ is a  Cauchy sequence in $\mathcal{N}^{2}_{w}(0,T,H^{1}(D))$.
\item[($iii$)] $(U_n)_{n\in \N}$ is a Cauchy sequence in $L^{2}\big(\Omega, H^1(Q)\big)$.
\item[($iv$)] For any $t$ in $[0,T]$, $(\vartheta_n(t))_{n\in \N}$ and $(\chi_n(t))_{n\in \N}$ are  Cauchy sequences in $L^2(\Omega, H^1(D))$.
\item[($v.$)]$(\partial_t\vartheta_n)_{n\in \N}$ is a Cauchy sequence in $L^2(\Omega\times Q)$.
\end{itemize}
\end{proof}
As mentioned by {\sc Da Prato-Zabczyk} \cite{DaPratoZabczyk}, $\mathcal{N}^{2}_{w}(0,T,H^{1}(D))$ is complete, thus due to Lemma \ref{lemcauchy}, the following convergence results hold directly.
\begin{cor}\label{191011_cor1}
There exist $\vartheta$ in $\mathcal{N}^{2}_{w}(0,T,H^{1}(D))\cap L^{2}\big(\Omega, H^1(Q)\big)$ and $\chi$ in\\ $\mathcal{N}^{2}_{w}(0,T,H^{1}(D))$ such that the sequences $(\vartheta_n)_{n\in \N},(\chi_n)_{n\in \N}$ and $(U_n)_{n\in \N}$ (introduced in Definition \ref{solapp}) satisfy the following convergence results
 \begin{equation*}
 \begin{array}{ccl}
 \di \vartheta_{n}\rightarrow \vartheta&\text{ in }&\mathcal{N}^{2}_{w}(0,T,H^{1}(D)) \text{ and }L^2(\Omega;H^1(Q)), \\ 
\di \chi_{n}\rightarrow \chi&\text{ in }&\mathcal{N}^{2}_{w}(0,T,H^{1}(D)), \\ 
 \di \partial_t \vartheta_{n}\rightarrow \partial_t\vartheta&\text{ in }&L^{2}(\Omega\times Q)\\
\di \partial_t U_{n}\rightarrow \partial_t\big(\chi-\int_{0}^{.}hdw\big)&\text{ in }&L^{2}(\Omega\times Q)\\
\forall t\in [0,T], \ \vartheta_n(t,.)\rightarrow \vartheta(t,.)&\text{ in }&L^2(\Omega,H^1(D)),\\
\forall t\in [0,T], \ \chi_n(t,.)\rightarrow \chi(t,.)&\text{ in }&L^2(\Omega,H^1(D)).\\
 \end{array}
 \end{equation*}
 \end{cor}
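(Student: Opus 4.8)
The plan is to obtain every line of the statement as a direct consequence of the Cauchy properties collected in Lemma~\ref{lemcauchy}, together with the completeness of the spaces involved. First I would record that $\mathcal{N}^{2}_{w}(0,T,H^{1}(D))$ is a closed subspace of the Hilbert space $L^{2}((0,T)\times\Omega,H^1(D))$ and hence complete, and that $L^{2}\big(\Omega,H^1(Q)\big)$ and $L^{2}(\Omega\times Q)$ are complete as well; moreover the intersection $\mathcal{N}^{2}_{w}(0,T,H^{1}(D))\cap L^{2}(\Omega,H^1(Q))$, equipped with the sum of the two norms, is complete since both spaces embed continuously into $L^2(\Omega\times Q)$. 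Applying this to Lemma~\ref{lemcauchy}(i)--(iii) and (v) yields strong limits: a $\vartheta$ to which $(\vartheta_n)$ converges in both $\mathcal{N}^{2}_{w}(0,T,H^{1}(D))$ and $L^2(\Omega,H^1(Q))$, a $\chi$ to which $(\chi_n)$ converges in $\mathcal{N}^{2}_{w}(0,T,H^{1}(D))$, and a $U$ to which $(U_n)$ converges in $L^2(\Omega,H^1(Q))$. This already gives the first two convergence lines.

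Next I would identify the remaining limits. For the time derivative of $\vartheta$, I would simply note that convergence of $(\vartheta_n)$ in $L^2(\Omega,H^1(Q))$ controls the distributional time derivative, so $\partial_t\vartheta_n\to\partial_t\vartheta$ in $L^2(\Omega\times Q)$ (this is also the strong limit furnished by Lemma~\ref{lemcauchy}(v), the two coinciding by uniqueness of limits), which is the third line. The only genuinely nontrivial identification concerns $U$, because the norm of $L^2(\Omega,H^1(Q))$ does not see the stochastic integral directly. I would therefore argue in the coarser topology of $L^{2}((0,T)\times\Omega,H^1(D))$: writing $U_n=\chi_n-\int_{0}^{.}h_n\,dw$, the convergence $\chi_n\to\chi$ combines with $\int_{0}^{.}h_n\,dw\to\int_{0}^{.}h\,dw$ in $L^{2}((0,T)\times\Omega,H^1(D))$ — the latter following from the It\^o isometry, $\Es\big[\int_0^t\|h_n-h\|_{H^1(D)}^2\,ds\big]\to 0$, and the fact that $(h_n)$ was chosen to approximate $h$ in $\mathcal{N}^{2}_{w}(0,T,H^{1}(D))$ — to give $U_n\to\chi-\int_{0}^{.}h\,dw$ in $L^{2}((0,T)\times\Omega,H^1(D))$. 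Since $U_n\to U$ also in $L^2(\Omega,H^1(Q))\hookrightarrow L^2(\Omega\times Q)$, uniqueness of the limit forces $U=\chi-\int_{0}^{.}h\,dw$, and because the $H^1(Q)$-convergence controls $\partial_t$, this yields the fourth line $\partial_t U_n\to\partial_t\big(\chi-\int_{0}^{.}h\,dw\big)$ in $L^2(\Omega\times Q)$.

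Finally, the two pointwise-in-time statements follow from Lemma~\ref{lemcauchy}(iv): for each fixed $t$, $(\vartheta_n(t))$ and $(\chi_n(t))$ are Cauchy in $L^2(\Omega,H^1(D))$, hence convergent there, and their limits must agree with the time traces of $\vartheta$ and $\chi$ by uniqueness of limits. The main — and essentially only — obstacle is the identification of $U$, namely justifying the convergence of the stochastic integrals and the interchange of the limit with $\partial_t$; all the other assertions are abstract consequences of completeness and uniqueness of strong limits.
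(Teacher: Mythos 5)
Your proposal is correct and takes essentially the same route as the paper, whose proof consists of a single remark: $\mathcal{N}^{2}_{w}(0,T,H^{1}(D))$ is complete (citing Da Prato--Zabczyk), so the convergences follow \emph{directly} from the Cauchy properties of Lemma \ref{lemcauchy}. Your additional details -- the identification of the limit of $(U_n)$ via the It\^o isometry and the uniqueness of limits in the coarser topology of $L^2(\Omega\times Q)$, and the interchange of the limit with $\partial_t$ -- are exactly the steps the paper leaves implicit in the word \enquote{directly}, so you have filled in, rather than altered, the paper's argument.
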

Note that since $(\vartheta_n)_{n\in\mathbb{N}}$ converges in\\ $L^2(\Omega;H^1(Q))$, it also converges in $L^{2}\big(\Omega, \mathscr{C}([0,T], L^{2}(D))\big)$ and, using the regularity of the It\^o integral, the same is true for $(\chi_n)_{n\in\mathbb{N}}$.
Thus, using Corollary \ref{191011_cor1}, we get that $t$-almost everywhere in $(0,T)$, $P$-almost surely in $\Omega$ and for any $v$ in $H^{1}(D)$
 \begin{align*}
\di\int_{D}\partial_t\vartheta v dx+\int_{D}\partial_t(\chi-\int_0^. h dw)vdx+\int_D\nabla \vartheta.\nabla vdx=0\\
\di\int_{D}\tilde{\ae}\Big( \partial_t(\chi-\int_0^. h dw)\Big)vdx+\int_D\nabla \chi.\nabla vdx=\int_D\vartheta vdx,
\end{align*}
and we have the existence result for $h\in \mathcal{N}^{2}_{w}(0,T,H^{1}(D))$ as announced in Theorem \ref{EUc1}.

\subsection{Uniqueness result for (\ref{eqc1})}
\begin{thm}\label{191011_thm01}
For $h$ in $\mathcal{N}^{2}_{w}(0,T,H^{1}(D))$ and initial data $(\chi_0,\vartheta_0)\in H^1(D)^2$ the solution in the sense of Definition \ref{ac1} of System (\ref{eqc1}) is unique.
\end{thm}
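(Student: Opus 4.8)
The plan is to adapt the difference estimates of Lemma~\ref{lemcauchy} to two arbitrary solutions sharing the same data. Let $(\vartheta_1,\chi_1)$ and $(\vartheta_2,\chi_2)$ be two solutions of (\ref{eqc1}) in the sense of Definition~\ref{ac1} associated with the same triple $(\vartheta_0,\chi_0,h)$, and set $\vartheta=\vartheta_1-\vartheta_2$ and $U_i=\chi_i-\int_0^.hdw$. Since both solutions are driven by the \emph{same} integrand $h$, the It\^o integrals cancel in the differences, so that $U:=U_1-U_2=\chi_1-\chi_2$ and, subtracting the two variational formulations (\ref{dv1})--(\ref{dv2}), one gets $t$-a.e.\ in $(0,T)$, $P$-a.s.\ in $\Omega$ and for all $v\in H^1(D)$,
\begin{align*}
&\int_D\partial_t\vartheta\,v\,dx+\int_D\partial_tU\,v\,dx+\int_D\nabla\vartheta\cdot\nabla v\,dx=0,\\
&\int_D\big(\ae(\partial_tU_1)-\ae(\partial_tU_2)\big)v\,dx+\int_D\nabla U\cdot\nabla v\,dx=\int_D\vartheta\,v\,dx.
\end{align*}
This is precisely the system (\ref{eqcauchy})--(\ref{eqcauchy1}) in which every term carrying a factor $h_n-h_m$ is now absent.

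Next I would record the regularity needed to run the energy computations. For $\vartheta$, the argument of Proposition~\ref{CoGr1} applies verbatim to any solution: $\vartheta$ solves $P$-a.s.\ the heat equation $\partial_t\vartheta-\Delta\vartheta=-\partial_tU$ with $\vartheta(0)=\vartheta_0\in H^1(D)$ and $\partial_tU\in L^2(\Omega\times Q)$, hence $\vartheta\in\mathscr{C}([0,T],L^2(\Omega,H^1(D)))$ and $t\mapsto E[\|\nabla\vartheta(t)\|^2]$ is continuous. The key point --- which dispenses with the auxiliary hypothesis $h\in\mathcal{N}^2_w(0,T,H^2(D))$ used in Proposition~\ref{CoGr} --- is that $U=\chi_1-\chi_2$ solves $\partial_tU-\Delta U=\vartheta-\big(\ae(\partial_tU_1)-\ae(\partial_tU_2)\big)$ with $U(0)=0$, a heat equation whose right-hand side lies in $L^2(\Omega\times Q)$ and from which the stochastic Laplacian term $\int_0^.\Delta h\,dw$ has disappeared by cancellation. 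The same energy equality then yields $U\in\mathscr{C}([0,T],L^2(\Omega,H^1(D)))$ and the continuity of $t\mapsto E[\|\nabla U(t)\|^2]$.

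Armed with these continuity properties, I would reproduce the four test-function estimates of Lemma~\ref{lemcauchy}, each time with vanishing $h$-difference and zero initial data. Testing the first equation with the time-difference quotient $\tfrac{1}{\Delta t}\big(\vartheta(t)-\vartheta(t-\Delta t)\big)$ and passing to the limit $\Delta t\to0$ gives the analogue of (\ref{cde1}); testing it with $\vartheta$ gives (\ref{t2}); testing the second equation with the time-difference quotient of $U$ and invoking the coercivity of $\ae$ gives (\ref{cde11}); and testing it with $U$ gives (\ref{t22}). Setting $y(t)=E[\|\vartheta(t)\|^2]+\tfrac12E[\|U(t)\|^2]$ and combining these estimates exactly as in (\ref{t222})--(\ref{hgl}), all the forcing terms $\|\nabla(h_n-h_m)\|^2$ drop out and one is left with
\[
y(t)\le \hat{C}_{\alpha}\int_0^t y(s)\,ds,\qquad t\in[0,T],
\]
for the same constant $\hat{C}_{\alpha}$ as in Lemma~\ref{lemcauchy}.

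Finally, Gr\"onwall's Lemma applied to this inequality with zero right-hand side forces $y\equiv0$ on $[0,T]$, whence $\vartheta_1=\vartheta_2$ and $U_1=U_2$, i.e.\ $\chi_1=\chi_2$, which is the claimed uniqueness. I expect the only genuine obstacle to be the justification, for \emph{arbitrary} solutions rather than the specially constructed ones, of the time-difference test functions and of the passage to the limit $\Delta t\to0$ underlying the estimates (\ref{cde1}) and (\ref{cde11}); this rests entirely on the two continuity statements above, and in particular on the observation that the troublesome stochastic term cancels in the difference, so that no extra regularity of $h$ is required.
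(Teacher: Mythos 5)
Your proof is correct, and its backbone is the same as the paper's: for two solutions driven by the \emph{same} integrand $h$, the It\^{o} integrals cancel in the difference, so $(\vartheta_1-\vartheta_2,\chi_1-\chi_2)$ solves, $P$-almost surely, a pathwise system of heat equations with right-hand sides in $L^2(\Omega\times Q)$, and the coercivity of $\alpha$ plus Gr\"onwall's lemma finish the job. The two arguments differ in how the energy estimates are produced. The paper applies the pathwise heat-equation energy equalities (the same identities as in (\ref{ContF}) and (\ref{energyv}), citing Brezis and Temam) \emph{directly} to the difference system, obtaining (\ref{e5}) and (\ref{e6}) in a few lines, and then runs Gr\"onwall twice: first on $\xi=\vartheta_1-\vartheta_2$ to conclude $\vartheta_1=\vartheta_2$, then on $u=\chi_1-\chi_2$ to conclude $\chi_1=\chi_2$. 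You instead re-run the time-difference-quotient machinery of Lemma~\ref{lemcauchy} to reproduce (\ref{cde1}), (\ref{t2}), (\ref{cde11}), (\ref{t22}) with zero $h$-difference and zero initial data, and close with a single Gr\"onwall applied to the combined functional $y(t)=E\big[\|(\vartheta_1-\vartheta_2)(t)\|^2\big]+\frac12 E\big[\|(\chi_1-\chi_2)(t)\|^2\big]$. Your handling of the one genuinely delicate point is exactly right, and it is the same observation the paper makes inside its own proof: the continuity of $t\mapsto E\big[\|\nabla(\chi_1-\chi_2)(t)\|^2\big]$ needed to pass to the limit $\Delta t\to 0$ follows from the pathwise energy equality for the difference, because the cancellation removes the term $\int_0^{\cdot}\Delta h\,dw$ and hence the $H^2$-hypothesis of Proposition~\ref{CoGr}. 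What the paper's route buys is economy: the energy equalities that you invoke only to justify continuity already \emph{are} the desired estimates, so the entire difference-quotient layer of your proof could be deleted without loss.

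One slip to correct: your displayed second difference equation omits the term $\int_D\partial_t U\,v\,dx$ coming from the identity part of $\tilde{\alpha}=I_d+\alpha$; compare (\ref{eqcauchy1}), which contains it. This is not purely cosmetic: the estimates you then quote, namely (\ref{cde11}) with its coefficient $\bar{C}_{\alpha}+\frac12$ and (\ref{t22}), use precisely that term (without it, the same Young splitting leaves the coefficient $\bar{C}_{\alpha}-\frac12$, which may be negative, so one would have to rebalance and the constants would change). Since you state that your system ``is precisely (\ref{eqcauchy})--(\ref{eqcauchy1})'' with the $h$-terms removed and then use the paper's estimates verbatim, the intended equation is clear; just restore the missing term, or equivalently write the nonlinearity as a difference of $\tilde{\alpha}$-terms and invoke the coercivity constant $\bar{C}_{\tilde{\alpha}}$.
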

\begin{proof}
We consider $h$ in $\mathcal{N}^{2}_{w}(0,T,H^{1}(D))$, and $(\chi,\vartheta)$, $(\hat{\chi},\hat{\vartheta})$  two solutions in the sense of Definition \ref{ac1}  of System (\ref{eqc1}).
Using the notations $\di U=\chi-\int_{0}^{.}hdw$ and $\di \hat{U}=\hat{\chi}-\int_{0}^{.}hdw$, one has
\begin{equation*}
\left\{\begin{array}{rcl}
\di \partial_t(\vartheta-\hat{\vartheta})+\partial_t(U-\hat{U})-\Delta(\vartheta-\hat{\vartheta})=0& \text{in}&(0,T)\times D\times\Omega,\\
\di \partial_t(U-\hat{U})-\Delta (U-\hat{U})+\ae(\partial_{t}U)-\ae(\partial_{t}\hat{U})=\vartheta-\hat{\vartheta} &\text{in}&(0,T)\times D\times\Omega, \\
\nabla (U-\hat{U}).\mathbf{ n}=\nabla(\vartheta-\hat{\vartheta}).\mathbf{ n}=0 &\text{on}& (0,T)\times\partial D\times \Omega,\\
(U-\hat{U})(0,.)=0&\text{and}&(\vartheta-\hat{\vartheta})(0,.)=0.
\end{array}\right.
\end{equation*}
Denoting $\xi=\vartheta-\hat{\vartheta}$ and $u=U-\hat{U}$, it follows that $(\xi,u)$ is the solution of the following system of heat equations
\begin{equation*}
\left\{\begin{array}{rcl}
\di \partial_t\xi-\Delta \xi=-\partial_tu\quad &\text{in}&(0,T)\times D\times\Omega,\\
\di\partial_{t}u-\Delta u=\ae(\partial_{t}\hat{U})-\ae(\partial_{t}U)+\xi \quad &\text{in}&(0,T)\times D\times\Omega,  \\
\nabla u.\mathbf{ n}=\nabla\xi.\mathbf{ n}=0 \quad &\text{on}& (0,T)\times\partial D\times \Omega,\\
u(0,.)=0&\text{and}&\xi(0,.)=0. 
\end{array}\right.
\end{equation*}
Following the same arguments as in (\ref{energyv}), note that $\xi$ satisfies the energy equality
\begin{align*}
&\frac{1}{2}E\l[\|\xi(t)\|^2\r]+E\l[\int_{Q_t}|\nabla \xi|^{2}dsdx\r]
=\frac{1}{2}\|\xi(0)\|^{2}-E\l[\int_{Q_t}\xi\partial_{t}u dsdx\r],
\end{align*}
for  any $t$ in $[0,T]$ where $Q_t=(0,t)\times D$. Then,
\begin{align}
&\ E\l[\|\xi(t)\|^2\r]+2\|\nabla \xi\|^{2}_{L^{2}(\Omega\times Q_t)}\leq\|\partial_{t}u\|^2_{L^{2}(\Omega\times Q_t)}+\|\xi\|^2_{L^{2}(\Omega\times Q_t)}.\label{e5}
\end{align}
As in (\ref{ContF}), one has for any $t$ in $[0,T]$
\begin{align*}
&E\l[\int_{Q_t}|\partial_t u|^2dsdx\r]+\frac{1}{2}E\l[\|\nabla u(t)\|^{2}_{L^{2}(D)}\r]
=\frac{1}{2}\|\nabla u(0)\|^{2}_{L^{2}(D)}\\
&+E\l[\int_{Q_t}\xi\partial_t udsdx\r]- E\l[\int_{Q_t}\left(\ae(\partial_{t}U)-\ae(\partial_{t}\hat{U})\right)\partial_{t}udsdx\r].
\end{align*}
Note that this allows us to affirm thanks to Lebesgue's theorem that the application $t\in[0,T]\mapsto E\left[\|\nabla u(t)\|^{2}\right]\in\R$ is continuous. \\
Due to the coercivity  property of $\ae$, one gets for any $t$ in $[0,T]$
\begin{align*}
(\bar{C}_{\alpha}+\frac{1}{2})||\partial_{t}u||^{2}_{L^{2}(\Omega\times Q_t)}+\frac{1}{2}E\l[\|\nabla u(t)\|^{2}_{L^{2}(D)}\r]
\leq \frac{1}{2}\|\xi\|^{2}_{L^{2}(\Omega\times Q_t)},
\end{align*}
and then
\begin{align}
||\partial_{t}u||^{2}_{L^{2}(\Omega\times Q_t)}
\leq \frac{1}{1+2\bar{C}_{\alpha}}\|\xi\|^{2}_{L^{2}(\Omega\times Q_t)}\label{e6}.
\end{align}
On the one hand, going back to (\ref{e5}), in virtue of (\ref{e6}), we have for any $t$ in $[0,T]$
\begin{align*}
&\ E\l[\|\xi(t)\|^2\r]\leq \Big(1+\frac{1}{1+2\bar{C}_{\alpha}}\Big)\int_0^tE\l[\|\xi(s)\|^2\r]ds,
\end{align*}
and Gr\"onwall's Lemma allows us to assert that $\xi=0$, thus $\vartheta=\tilde\vartheta$.\\
On the other hand, the study of the heat equation also provides the following estimate on $u$ for any $t$ in $[0,T]$:
\begin{align*}
&\frac{1}{2}E\left[||u(t)||^2\right]-\frac{1}{2}||u(0)||^2+||\nabla u||^2_{L^2(\Omega\times Q_t)}\\
\leq&  \ ||u||^2_{L^2(\Omega\times Q_t)}+\frac{C_{\alpha}^2}{2}||\partial_{t}u||^{2}_{L^{2}(\Omega\times Q_t)}+\frac{1}{2}\|\xi\|^{2}_{L^{2}(\Omega\times Q_t)},
\end{align*}
which gives using (\ref{e6}) and the fact that $\xi=0$
\begin{align*}
&\frac{1}{2}E\left[||u(t)||^2\right]\leq||u||^2_{L^2(\Omega\times Q_t)}.
\end{align*} 
According to Gr\"onwall's Lemma, $u=0$ which implies that $\chi=\tilde\chi$ and the uniqueness result holds for (\ref{eqc1}).
\end{proof} 
 
\section{Proof of Proposition \ref{contdep}}
\noindent The proof of Lemma \ref{lemcauchy} allows us to show directly the following continuous dependence result  on the sequences $(\vartheta_n)_{n\in\N}$, $(\chi_n)_{n\in\N}$ given by Definition \ref{solapp}  with respect to the sequence of  integrands $(h_n)_{n\in\N}$ in the stochastic noise.

\subsection{Preliminary result}
\begin{lem}\label{lemdep1} There exists a constant $C_{\alpha}^T>0$ which only depends on  $T$, $C_{\alpha}$ and  $\bar{C}_{\alpha}$ such that the sequences $(\vartheta_n)_{n\in \N},(\chi_n)_{n\in \N}$ and $(U_n)_{n\in \N}$ introduced in Definition \ref{solapp} satisfy the following inequality for any $t$ in $[0,T]$
 \begin{align}\label{cecs}
 &\di \ || \partial_t(\vartheta_n-\vartheta_m)||^2_{L^2(\Omega\times Q_t)} +\big(\bar{C}_{\alpha} +\frac{1}{2}\big)||\partial_{t}(U_{n}-U_{m})||^{2}_{L^2(\Omega\times Q_t)}\nonumber\\
& \ +E\left[||(\vartheta_n-\vartheta_m)(t)||^2\right]+E\left[||\nabla (\vartheta_{n}- \vartheta_{m})(t)||^2\right]\nonumber\\
&+\ \frac{1}{4}E\left[||(\chi_n-\chi_m)(t)||^2\right]+\frac{1}{4}E\left[\|\nabla(\chi_{n}-\chi_{m})(t)\|^{2}\right]\nonumber\\
\leq&\ {C}_{\alpha}^T\left( ||h_n-h_m||^2_{L^2(\Omega\times Q_t)}+||\nabla(h_{n}- h_{m}) ||^2_{L^2(\Omega\times Q_t)}\right),
 \end{align}
 where $Q_{t}=(0,t)\times D$.
 \end{lem}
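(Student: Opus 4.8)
The plan is to re-run the energy estimates from the proof of Lemma \ref{lemcauchy} while keeping every norm localized on $Q_t=(0,t)\times D$ (instead of passing to a supremum over $[0,T]$ as was done there), and to supplement them with a single new estimate controlling $E[\|(\chi_n-\chi_m)(t)\|^2]$, which is the only quantity in \eqref{cecs} that does not already appear in Lemma \ref{lemcauchy}. Throughout, I would track the dependence of the constants on $T$, $C_\alpha$ and $\bar{C}_\alpha$ only, so as to obtain the single constant $C_\alpha^T$.

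First I would collect the four localized inequalities already established in that proof, namely \eqref{cde1}, \eqref{t2}, \eqref{cde11} and \eqref{t22}. Adding \eqref{t2} and \eqref{t22} and inserting the bound on $\|\partial_t(U_n-U_m)\|^2_{L^2(\Omega\times Q_t)}$ furnished by \eqref{cde11}, I obtain for $y(t)=E[\|(\vartheta_n-\vartheta_m)(t)\|^2]+\tfrac12 E[\|(U_n-U_m)(t)\|^2]$ an inequality of the type $y(t)\le K(t)+\hat{C}_\alpha\int_0^t y(s)\,ds$, whose forcing term $K(t)$ is proportional to $\|\nabla(h_n-h_m)\|^2_{L^2(\Omega\times Q_t)}$ and is nondecreasing in $t$. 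Gr\"onwall's Lemma then yields $y(t)\le K(t)e^{\hat{C}_\alpha t}$, which bounds both $E[\|(\vartheta_n-\vartheta_m)(t)\|^2]$ and $E[\|(U_n-U_m)(t)\|^2]$ by $C_\alpha^T\|\nabla(h_n-h_m)\|^2_{L^2(\Omega\times Q_t)}$.

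The genuinely new ingredient is the decomposition $\chi_n-\chi_m=(U_n-U_m)+\int_0^{\cdot}(h_n-h_m)\,dw$. Combining $\|a+b\|^2\le 2\|a\|^2+2\|b\|^2$ with the It\^o isometry $E[\|\int_0^t(h_n-h_m)\,dw\|^2]=\|h_n-h_m\|^2_{L^2(\Omega\times Q_t)}$ gives $\tfrac14 E[\|(\chi_n-\chi_m)(t)\|^2]\le \tfrac12 E[\|(U_n-U_m)(t)\|^2]+\tfrac12\|h_n-h_m\|^2_{L^2(\Omega\times Q_t)}$; the coefficient $\tfrac14$ is calibrated precisely so that the first term on the right is absorbed by the $y(t)$-bound of the previous step. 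This is where the norm $\|h_n-h_m\|^2_{L^2(\Omega\times Q_t)}$, absent from Lemma \ref{lemcauchy}, enters \eqref{cecs}.

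It then remains to propagate the bounds back along the chain: feeding the control of $\|\vartheta_n-\vartheta_m\|^2_{L^2(\Omega\times Q_t)}$ (obtained by integrating $y$ in time) into \eqref{cde11} bounds $(\bar{C}_\alpha+\tfrac12)\|\partial_t(U_n-U_m)\|^2_{L^2(\Omega\times Q_t)}$ and $\tfrac12 E[\|\nabla(\chi_n-\chi_m)(t)\|^2]$, which covers the $\tfrac14$-gradient term for $\chi$; inserting the latter into \eqref{cde1} then bounds $\|\partial_t(\vartheta_n-\vartheta_m)\|^2_{L^2(\Omega\times Q_t)}$ and $E[\|\nabla(\vartheta_n-\vartheta_m)(t)\|^2]$. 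Summing all the pieces and absorbing the constants into one $C_\alpha^T$ depending only on $T$, $C_\alpha$ and $\bar{C}_\alpha$ delivers \eqref{cecs}. The main point requiring care — and the only real obstacle — is to keep the localization on $Q_t$ consistent throughout, so that Gr\"onwall's Lemma applies with a nondecreasing forcing term (rather than globalizing by a supremum as in Lemma \ref{lemcauchy}), while checking that the It\^o isometry is legitimately applied to the predictable, square-integrable integrand $h_n-h_m$.
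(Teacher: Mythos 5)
Your proposal is correct and follows essentially the same route as the paper's own proof: both start from the localized inequalities (\ref{cde1}), (\ref{t2}), (\ref{cde11}), (\ref{t22}) to obtain the Gr\"onwall inequality (\ref{hgl}) with the forcing term $K^{n,m}_{\alpha}(t)$ kept on $Q_t$, apply Gr\"onwall's Lemma exploiting that this forcing is nondecreasing, control $E\left[\|(\chi_n-\chi_m)(t)\|^2\right]$ through the decomposition $\chi_n-\chi_m=(U_n-U_m)+\int_0^{\cdot}(h_n-h_m)\,dw$ together with the It\^o isometry, and then propagate the bounds back through (\ref{cde11}) and (\ref{cde1}). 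The only cosmetic difference is the form of Gr\"onwall's Lemma invoked (your $y(t)\leq K(t)e^{\hat{C}_{\alpha}t}$ versus the paper's $y(t)\leq K(t)+\int_0^t \hat{C}_{\alpha}K(s)e^{\hat{C}_{\alpha}(t-s)}\,ds$ followed by the monotonicity of $K$), and both are valid.
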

\begin{proof}
Owing to (\ref{hgl}) and by using again Gr\"onwall's Lemma one gets for any $t$ in $[0,T]$
  \begin{align*}
& \ E\left[||(\vartheta_n-\vartheta_m)(t)||^2\right]+\ \frac{1}{2}E\left[||(U_n-U_m)(t)||^2\right]\\
\leq& \ K^{n,m}_{\alpha}(t)+\int_0^t  \hat{C_{\ae}} K^{n,m}_{\alpha}(s)e^{\hat{C_{\ae}}(t-s)}ds, 
 \end{align*}
where $$K^{n,m}_{\alpha}(t)=\left(\frac{C_{\alpha}^2+1}{\bar{C}_{\alpha} +\frac{1}{2}}+\frac{T}{2}\right)||\nabla(h_{n}- h_{m}) ||^2_{L^2(\Omega\times Q_t)}\text{ and }\hat{C_{\ae}}=\ \frac{C_{\alpha}^2+1}{2\bar{C}_{\alpha} +1}+2.$$
Thus we obtain 
  \begin{align*}
& \ E\left[||(\vartheta_n-\vartheta_m)(t)||^2\right]+\ \frac{1}{2}E\left[||(U_n-U_m)(t)||^2\right]\\
\leq& \big(1+ \hat{C_{\ae}}Te^{\hat{C_{\ae}}T}\big)\left(\frac{C_{\alpha}^2+1}{\bar{C}_{\alpha} +\frac{1}{2}}+\frac{T}{2}\right)||\nabla(h_{n}- h_{m}) ||^2_{L^2(\Omega\times Q_t)}, 
 \end{align*}
And using this in (\ref{cde11}) allows us to affirm for any $t$ in $[0,T]$ that
  \begin{align*}
&\big(\bar{C}_{\alpha} +\frac{1}{2}\big)||\partial_{t}(U_{n}-U_{m})||^{2}_{L^2(\Omega\times Q_t)}+\frac{1}{2}E\left[\|\nabla(\chi_{n}-\chi_{m})(t)\|^{2}\right]\nonumber\\
\leq&\ \l\{\frac{T}{2}\big(1+ \hat{C_{\ae}}Te^{\hat{C_{\ae}}T}\big)\left(\frac{C_{\alpha}^2+1}{\bar{C}_{\alpha} +\frac{1}{2}}+\frac{T}{2}\right)+1\r\}||\nabla(h_{n}- h_{m}) ||^2_{L^2(\Omega\times Q_t)}. 
 \end{align*}
Now by injecting this last inequality  in (\ref{cde1}), we obtain for any $t$ in $[0,T]$
  \begin{align*}
&\di \ || \partial_t(\vartheta_n-\vartheta_m)||^2_{L^2(\Omega\times Q_t)} +E\left[||\nabla (\vartheta_{n}- \vartheta_{m})(t)||^2\right]\nonumber\\
\leq&\ \ \frac{1}{\bar{C}_{\alpha} +\frac{1}{2}} \l\{\frac{T}{2}\big(1+ \hat{C_{\ae}}Te^{\hat{C_{\ae}}T}\big)\left(\frac{C_{\alpha}^2+1}{\bar{C}_{\alpha} +\frac{1}{2}}+\frac{T}{2}\right)+1\r\}||\nabla(h_{n}- h_{m}) ||^2_{L^2(\Omega\times Q_t)}. 
 \end{align*}
 Finally, by noticing that for any $t$ in $[0,T]$
 $$E\l[||(\chi_n-\chi_m)(t)||^2\r]\leq 2E\l[||(U_n-U_m)(t)||^2\r]+2||h_n-h_m||^2_{L^2(\Omega\times Q_t)}$$
 one gets
 \begin{align*}
& \ E\left[||(\vartheta_n-\vartheta_m)(t)||^2\right]+\ \frac{1}{4}E\left[||(\chi_n-\chi_m)(t)||^2\right]\\
&+\di \ || \partial_t(\vartheta_n-\vartheta_m)||^2_{L^2(\Omega\times Q_t)} +E\left[||\nabla (\vartheta_{n}- \vartheta_{m})(t)||^2\right]\nonumber\\
&+\big(\bar{C}_{\alpha} +\frac{1}{2}\big)||\partial_{t}(U_{n}-U_{m})||^{2}_{L^2(\Omega\times Q_t)}+\frac{1}{2}E\left[\|\nabla(\chi_{n}-\chi_{m})(t)\|^{2}\right]\nonumber\\
\leq&\ \ (\frac{1}{\bar{C}_{\alpha} +\frac{1}{2}}+1) \l\{\frac{T}{2}\big(1+ \hat{C_{\ae}}Te^{\hat{C_{\ae}}T}\big)\left(\frac{C_{\alpha}^2+1}{\bar{C}_{\alpha} +\frac{1}{2}}+\frac{T}{2}\right)+1\r\}||\nabla(h_{n}- h_{m}) ||^2_{L^2(\Omega\times Q_t)}\\
&+ \big(1+ \hat{C_{\ae}}Te^{\hat{C_{\ae}}T}\big)\left(\frac{C_{\alpha}^2+1}{\bar{C}_{\alpha} +\frac{1}{2}}+\frac{T}{2}\right)||\nabla(h_{n}- h_{m}) ||^2_{L^2(\Omega\times Q_t)}\\
&+\frac{1}{2}||h_n-h_m||^2_{L^2(\Omega\times Q_t)}
 \end{align*}
and thus the existence of a constant ${C}_{\alpha}^T$ which only depends on $T, C_{\alpha}, \bar{C}_{\alpha}$ and $\hat{C_{\ae}}$ such that
 \begin{align*}
 &\di \ || \partial_t(\vartheta_n-\vartheta_m)||^2_{L^2(\Omega\times Q_t)} +\big(\bar{C}_{\alpha} +\frac{1}{2}\big)||\partial_{t}(U_{n}-U_{m})||^{2}_{L^2(\Omega\times Q_t)}\\
& \ +E\left[||(\vartheta_n-\vartheta_m)(t)||^2\right]+E\left[||\nabla (\vartheta_{n}- \vartheta_{m})(t)||^2\right]\nonumber\\
&+\ \frac{1}{4}E\left[||(\chi_n-\chi_m)(t)||^2\right]+\frac{1}{4}E\left[\|\nabla(\chi_{n}-\chi_{m})(t)\|^{2}\right]\\
\leq&{C}_{\alpha}^T\left( ||h_n-h_m||^2_{L^2(\Omega\times Q_t)}+||\nabla(h_{n}- h_{m}) ||^2_{L^2(\Omega\times Q_t)}\right).
 \end{align*}
\end{proof}

\subsection{Proof of Proposition \ref{contdep}}
Using a Cauchy sequence argument as in Subsection \ref{step2} and recalling the uniqueness result of Theorem \ref{191011_thm01}, one gets by passing to the limit in the inequality (\ref{cecs}) above the stability result announced in Proposition \ref{contdep}. More precisely, for $h,\hat{h}\in \mathcal{N}^{2}_{w}(0,T,H^{1}(D))$ there exist $(h_n)_{n\in\mathbb{N}}$ and $(\hat{h}_n)_{n\in\mathbb{N}}$ in $\mathcal{N}^{2}_{w}(0,T,\mathcal{C}^{\infty}_c(\overline{D}))$ such that $h_n\rightarrow h$, $\hat{h}_n\rightarrow\hat{h}$ in $\mathcal{N}^{2}_{w}(0,T,H^{1}(D))$ for $n\rightarrow\infty$. We fix initial data $(\vartheta_0,\chi_0)\in H^1(D)^2$ and consider solutions $(\vartheta_n,\chi_n)$, $(\hat{\vartheta}_n,\hat{\chi}_n)$ with data $(h_n,\vartheta_0,\chi_0)$, $(\hat{h}_n,\vartheta_0,\chi_0)$ respectively. Plugging $(\vartheta_n,\chi_n)=(\vartheta_n,\chi_n)$, $(\vartheta_m,\chi_m)=(\hat{\vartheta}_n,\hat{\chi}_n)$ into \eqref{cecs} and using the convergence results from Corollary \ref{191011_cor1}, we can pass to the limit and obtain
 \begin{align*}
 &\di \ || \partial_t(\vartheta-\hat{\vartheta})||^2_{L^2(\Omega\times Q_t)} +\big(\bar{C}_{\alpha} +\frac{1}{2}\big)||\partial_{t}(U-\hat{U})||^{2}_{L^2(\Omega\times Q_t)}\nonumber\\
& \ +E\left[||(\vartheta-\hat{\vartheta})(t)||^2\right]+E\left[||\nabla (\vartheta-\hat{\vartheta})(t)||^2\right]\nonumber\\
&+\ \frac{1}{4}E\left[||(\chi-\hat{\chi})(t)||^2\right]+\frac{1}{4}E\left[\|\nabla(\chi-\hat{\chi})(t)\|^{2}\right]\nonumber\\
\leq&\ {C}_{\alpha}^T\left( ||h-\hat{h}||^2_{L^2(\Omega\times Q_t)}+||\nabla(h-\hat{h}) ||^2_{L^2(\Omega\times Q_t)}\right),
 \end{align*}
 where $(\vartheta,\chi)$, $(\hat{\vartheta},\hat{\chi})$ are the unique solutions with data $(h,\vartheta_0,\chi_0)$, $(\hat{h},\vartheta_0,\chi_0)$ respectively, $U:=\chi-\int_0^{\cdot} h \ dw$, $\hat{U}:=\hat{\chi}-\int_0^{\cdot} \hat{h} \ dw$.

\section{Proof of Theorem \ref{Non}}

Under Assumptions $H_2$ to $H_4$, we are interested in the following system with multiplicative noise:
\begin{equation*}\label{eqc12}
\left\{\begin{array}{rcl}
\di \partial_t\vartheta+\partial_t(\chi-\int_0^. \mathscr{H}(\chi)dw)-\Delta\vartheta&=&0 \text{ in }(0,T)\times D\times\Omega,\\
\di \tilde{\ae}\left(\partial_t (\chi-\int_0^. \mathscr{H}(\chi)dw)\right) -\Delta \chi &=&\vartheta \text{ in }(0,T)\times D\times\Omega, \\
\nabla \chi.\mathbf{n}=\nabla\vartheta.\mathbf{n}&=&0 \text{ on } (0,T)\times\partial D\times \Omega,\\
\chi(0,.)=\chi_{0}&\text{and}&\vartheta(0,.)=\vartheta_0.
\end{array}\right.
\end{equation*}
\noindent Using  Theorem \ref{EUc1}, we define the application 
\begin{eqnarray*}
f: \mathcal{N}^{2}_{w}(0,T,H^{1}(D))&\rightarrow& \mathcal{N}^{2}_{w}(0,T,H^{1}(D))\times \mathcal{N}^{2}_{w}(0,T,H^{1}(D)),\\%\times\mathcal{N}^{2}_{w}(0,T,H^{1}(D)) \\
S&\mapsto& \di (\vartheta_S,\chi_{S}),
\end{eqnarray*}
\noindent where $(\vartheta_S,\chi_{S})$ is the solution  of the following system with additive noise
\begin{equation*}\label{eqc15}
\left\{\begin{array}{rcl}
\di \partial_t\vartheta_S+\partial_t(\chi_S-\int_0^. \mathscr{H}(S)dw)-\Delta\vartheta_S&=&0\text{ in }(0,T)\times D\times\Omega,\\
\di \tilde{\ae}\left(\partial_t (\chi_S-\int_0^. \mathscr{H}(S)dw)\right) -\Delta \chi_S&=&\vartheta_S \text{ in }(0,T)\times D\times\Omega, \\
\nabla \chi_S.\mathbf{n}=\nabla\vartheta.\mathbf{n}&=&0 \text{ on } (0,T)\times\partial D\times \Omega,\\
\chi_S(0,.)=\chi_{0}&\text{and}&\vartheta_S(0,.)=\vartheta_0,
\end{array}\right.
\end{equation*}
\noindent in the sense of Definition \ref{ac1} with $h=\mathscr{H}(S)$. Additionally, we consider the following projection application 
\begin{eqnarray*}
g: \mathcal{N}^{2}_{w}(0,T,H^{1}(D))\times \mathcal{N}^{2}_{w}(0,T,H^{1}(D))&\rightarrow& \mathcal{N}^{2}_{w}(0,T,H^{1}(D)),\\%\times\mathcal{N}^{2}_{w}(0,T,H^{1}(D)) \\
(u,v)&\mapsto& \di v.
\end{eqnarray*}
Our aim is to show that the composition $g\circ f$ admits a unique fixed-point in $\mathcal{N}^{2}_{w}(0,T,H^{1}(D))$. The idea is to exploit the fact that, for any $a>0$, the following exponential weight in time norm
\begin{eqnarray*}
\mathcal{N}^{2}_{w}(0,T,H^{1}(D))&\rightarrow &\R_+\\
v&\mapsto& \int_0^T e^{-at}E\big[||v(t)||^2_{H^1(D)}\big]dt
\end{eqnarray*}
provides an equivalent norm  to the usual one on $\mathcal{N}^{2}_{w}(0,T,H^{1}(D))$.
\quad \\
Set $a>0$, consider  $S$ and  $\hat{S}$ in $\mathcal{N}^{2}_{w}(0,T,H^{1}(D))$ and define  $f(S)=(\vartheta_S,\chi_S)$ and $f(\hat S)=(\vartheta_{\hat S},\chi_{\hat S})$. According to \cite{PrevotRockner} Lemma 2.41 p.35, note that $\mathscr{H}(S)$ and  $\mathscr{H}(\hat{S})$  belong to $\mathcal{N}^{2}_{w}(0,T,H^{1}(D))$. Using Proposition \ref{contdep}, one gets for any $t$ in $[0,T]$
\begin{align}\label{191015_01}
& \ E\left[||(\vartheta_S-\vartheta_{\hat S})(t)||^2\right]+E\left[||\nabla (\vartheta_S- \vartheta_{\hat S})(t)||^2\right]\nonumber\\
&+\ \frac{1}{4}E\left[||(\chi_{S}-\chi_{\hat S})(t)||^2\right]+\frac{1}{4}E\left[\|\nabla(\chi_S-\chi_{\hat S})(t)\|^{2}\right]\nonumber\\
\leq&\ {C}_{\alpha}^T\left( ||\mathscr{H}(S)-\mathscr{H}(\hat S)||^2_{L^2(\Omega\times Q_t)}+||\nabla(\mathscr{H}(S)- \mathscr{H}(\hat S)) ||^2_{L^2(\Omega\times Q_t)}\right)\nonumber\\
\leq&\ {C}_{\alpha}^T\int_0^tE\left[ ||\mathscr{H}(S)-\mathscr{H}(\hat S)||^2_{H^1(D)}\right]ds.
%(\Omega\times Q_t)}+||\nabla(\mathscr{H}(S)- \mathscr{H}(\hat S)) ||^2_{L^2(\Omega\times Q_t)}\right).}
\end{align} 
We fix $t>0$. Multiplying \eqref{191015_01} with $e^{-at}$ and integrating over $(0,T)$, we obtain
\begin{align*}
&\int_0^Te^{-at}E\left[||(\chi_{S}-\chi_{\hat S})(t)||^2\right]dt+\int_0^Te^{-at}E\left[\|\nabla(\chi_S-\chi_{\hat S})(t)\|^{2}\right]dt\\
\leq&\ 4{C}_{\alpha}^T\int_0^T e^{-at}\int_0^tE\left[||(\mathscr{H}(S)-\mathscr{H}(\hat S))(s)||_{H^1(D)}^2\right]dsdt.
\end{align*}
By using an integration by parts one gets
 \begin{align*}
&\int_0^Te^{-at}E\left[||(\chi_{S}-\chi_{\hat S})(t)||^2\right]dt+\int_0^Te^{-at}E\left[\|\nabla(\chi_S-\chi_{\hat S})(t)\|^{2}\right]dt\\
\leq&\ 4{C}_{\alpha}^TC^2_{\mathscr{H}}\int_0^Te^{-at}\int_0^tE\left[||(S-\hat S)(s)||^2_{H^1(D)}\right]dsdt\\
=&\ \di 4{C}_{\alpha}^TC^2_{\mathscr{H}}\times \frac{1}{a}\int_{0}^{T}e^{-a t}E\left[\| (S-\hat{S})(t)\|^{2}_{H^1(D)}\right]dt \\
&-4{C}_{\alpha}^TC^2_{\mathscr{H}}\times\Big(\frac{1}{a}e^{-a T}\int_{0}^{T}E\left[\| (S-\hat{S})(t)\|^{2}_{H^1(D)}\right]dt\Big) \\
\leq&\ \di  4{C}_{\alpha}^TC^2_{\mathscr{H}}\times \frac{1}{a}\int_{0}^{T}e^{-a t}E\left[\| (S-\hat{S})(t)\|^{2}_{H^1(D)}\right]dt.
 \end{align*} 
Finally 
\begin{align*} 
&\int_{0}^{T}e^{-a t}E\left[||(g\circ f)(S)-(g\circ f)(\hat S)||_{H^{1}(D)}^2\right] dt\\
\leq&\ \di4{C}_{\alpha}^TC^2_{\mathscr{H}}\times \frac{1}{a}\int_{0}^{T}e^{-a t}E\left[\| (S-\hat{S})(t)\|^{2}_{H^1(D)}\right]dt.
\end{align*}
Under the condition $a>4{C}_{\alpha}^TC^2_{\mathscr{H}}$, $g\circ f$ is a contractive mapping, it has a unique fixed-point  and the result holds.

\subsection*{Acknowledgments}
The authors wish to thank the joint program between the \textit{Higher Education Commission from Pakistan Ministry of Higher Education} and the \textit{French Ministry of Foreign and European Affairs} for the funding of A. Maitlo's PhD thesis.\\
The authors wish to thank the PPP Programme for Project-Related Personal Exchange (France-Germany) for financial support.

% ------------------------------------------------------------------------
\end{document}